\newcommand{\arxiv}[1]{\href{http://arxiv.org/abs/#1}{\texttt{arXiv:#1}}}
\theoremstyle{plain}
\newtheorem{theorem}{Theorem}[section]
\newtheorem{lemma}[theorem]{Lemma}
\newtheorem{corollary}[theorem]{Corollary}
\theoremstyle{definition}
\newtheorem{example}[theorem]{Example}
\newtheorem{conjecture}[theorem]{Conjecture}
\theoremstyle{remark}
\newtheorem{remark}[theorem]{Remark}
\title{\bf The connection between the chromatic function and the Redei-Berge function}
\author{Stefan Mitrovi\'c \footnote{Corresponding author\\ Address : stefan.mitrovic@matf.bg.ac.rs}\\
\small Faculty of Mathematics\\[-0.8ex]
\small University of Belgrade\\[-0.8ex]
\small Serbia\\
\small\tt stefan.mitrovic@matf.bg.ac.rs\\
\and
Tanja Stojadinovi\'c\\
\small Faculty of Mathematics\\[-0.8ex]
\small University of Belgrade\\[-0.8ex]
\small Serbia\\
\small\tt tanja.stojadinovic@matf.bg.ac.rs}
\begin{document}

\maketitle

\begin{abstract}
There is a natural way to assign both graph and digraph to every poset. Furthermore, any graph has its chromatic function, while any digraph has its Redei-Berge function. On the level of posets, these two functions are almost identical. Here, we prove that this connection is actually a reflection of the connection between the noncommutative generalizations of these two functions. The simplicity of this relationship enables us to easily translate the properties proved for one of them to the case of the other. We perform such conversions regarding distinguishability, decomposition techniques and positivity questions. Among others, we obtain the converse of Redei's theorem, generalization of the triple deletion property and expressions for these functions in some special cases. 

\bigskip\noindent \textbf{Keywords}: graph, digraph, poset, chromatic function, Redei-Berge function

\small \textbf{MSC2020}:  05E05, 05C20, 05C31, 06A07 
\end{abstract}

\section{Introduction}

In 1995, Stanley defined the chromatic symmetric function of a simple graph as a generalization of the chromatic polynomial  \cite{SR}. In the last 30 years, many aspects of this function have been studied. The question that naturally emerges whenever a new combinatorial invariant is introduced is the question of distinguishing non-isomorphic objects. There are many papers dealing with this subject, for example  \cite{OS}, which also contains many decomposition techniques for the chromatic function. Furthermore, the abundance of new bases for the space of symmetric functions  that consist of the chromatic functions is given in \cite{CvW}. Finally, one of the most famous problems regarding this function is the question of positivity of its coefficients in many natural bases of $Sym$. Some of the papers that treat this topic are \cite{G}, \cite{MGP}, \cite{H}, \cite{ST}. Moreover, certain generalizations of the chromatic symmetric function have arisen in order to resolve this question \cite{SG}, \cite{SW}. Throughout this paper, we will be mostly interested in the one from \cite{SG}, which is defined in the space of symmetric functions in noncommuting variables.

Around the time of birth of the chromatic function, Stanley's student Chow defined the path-cycle symmetric function of a digraph \cite{TC}. The Redei-Berge symmetric function of a digraph is a recently introduced simplification of this function \cite{S}. Its expansion in the power sum basis easily yields two beautiful theorems from graph theory - Redei's theorem and Berge's theorem, hence the name, see \cite{LR}, \cite{CB}. The properties of the Redei-Berge function analogous to the previously described properties of the chromatic function have been studied in \cite{GS} and \cite{MS}. Finally, the generalization of the Redei-Berge function in noncommuting variables is introduced in \cite{M}.

There is a natural way to assign both graph and digraph to any poset. It turns out that, on the level of posets, the chromatic function and the Redei-Berge function are closely related. In this paper, we explore this connection in both commutative and noncommutative case. The results easily obtained for one of them yield unexpected consequences for the other. Although the ambient of graphs is more common, it appears that for some calculations the ambient of digraphs is more convenient. 

The paper is structured as follows. In Section 2, we recall the background of symmetric and quasisymmetric functions, graphs, digraphs and posets. In Section 3, we give another proof of the connection between the chromatic function and the Redei-Berge function. This proof uses descent sets of listings that appear in Stanley and Grinberg's definition of the Redei-Berge function. We originally obtained this result by discovering the antipodal connection between two combinatorial Hopf algebras. However, for the sake of simplicity, we decided to avoid introducing the theory of combinatorial Hopf algebras. Section 4 contains some examples of exploring the relationship between these two functions. In this way, we managed to prove a converse of the Redei's theorem. In addition, we investigate which properties are detectable from the chromatic and the Redei-Berge function and exploit the methods of constructing new bases for $Sym$ consisting of these functions. Finally, by comparing their expansions in the power sum basis, we obtain a combinatorial connection between the incomparability graph of $P$ and the digraph assigned to $P$. In Section 5, we give the analogous connection for the noncommutative versions of the chromatic and the Redei-Berge function. This yields yet another proof of the relationship of the ordinary versions of these functions. However, the techniques used for operating with noncommutative symmetric functions are completely different and often simpler since they allow the use of mathematical induction. The positivity of these two functions is the subject of Section 6. We convert the famous Stanley-Stembridge conjecture to the question of $h-$positivity of the Redei-Berge function, which could hopefully pave the new way for proving this hypothesis. The deletion-contraction property that is satisfied by the noncommuting Redei-Berge function turns out as a powerful tool for proving $h-$positivity for the ordinary Redei-Berge function of some classes of digraphs. In the last section, we exploit a decomposition of the Redei-Berge function into the sum of the Redei-Berge function of some very simple digraphs, called bags of sticks. In this manner, we obtain a generalization of the well-known triple deletion property of the chromatic function. Moreover, Chung and Graham's cover polynomial and Chow's path-cycle symmetric function enable us to explicitly calculate the Redei-Berge function and the Redei-Berge polynomial for bags of sticks.

\section{Preliminaries}

In this section, we review some basic notions and facts. A {\it set partition} $\pi=\{V_1,\ldots,V_k\}\vdash V$ of length $k=l(\pi)$
of a finite set $V$ is a set of disjoint nonempty subsets with $V_1\cup\ldots\cup V_k=V$. We write $\pi=V_1/\ldots/V_k$ and say that $V_i$'s are the blocks of $\pi$. A {\it set composition} $(V_1,\ldots,V_k)\models V$ is an ordered set partition. 

A \textit{composition} $\alpha\models n$ of length $k=l(\alpha)$, where $n\in\mathbb{N}$, is a sequence $\alpha=(\alpha_1,\ldots,\alpha_k)$ of
positive integers with $\alpha_1+\cdots+\alpha_k=n$. We say that $n$ is the weight of $\alpha$ and write $n=|\alpha|$. 
A \textit{partition} $\lambda\vdash n$ is a
composition $(\lambda_1,\ldots,\lambda_k)\models n$ such that $\lambda_1\geq \lambda_2\geq\cdots\geq
\lambda_k$. There is a bijection between the set of compositions of $n$, denoted by $\mathrm{Comp}(n)$ and the power set $2^{[n-1]}$ given by
$\mathrm{set}: (\alpha_1,\ldots,\alpha_k)\mapsto\{\alpha_1,\alpha_1+\alpha_2,\ldots, \alpha_1+\cdots+\alpha_{k-1}\}$. The inverse of this bijection is denoted by $I\mapsto\mathrm{comp}(I)$. We say that composition $\alpha\models n$ is \textit{finer} than composition  $\beta\models n$, and write $\alpha\leq\beta$, if $\mathrm{set}(\beta)\subseteq\mathrm{set}(\alpha)$. Equivalently, parts of $\beta$ are obtained by summing some adjacent parts of $\alpha$.

If $V$ is a set, a $V-$\textit{listing} is a list of all elements of $V$ with no repetitions, i.e. a bijective map $\sigma:[n]\rightarrow V$. We write $\Sigma_V$ for the set of all $V$-listings. A \textit{reversion} of a $V$-listing $\sigma=(\sigma_1,\ldots,\sigma_n)\in\Sigma_V$ is a $V$-listing $\sigma^\mathrm{rev}=(\sigma_n,\ldots,\sigma_1)$. For a subset $I\subseteq[n-1]$ we denote by $I^{\mathrm{op}}=\{n-i|i\in I\}$ its \textit{opposite set}. If $\alpha\models n$, we write $\alpha^{\mathrm{op}}$ for a composition whose corresponding set is $\mathrm{set}(\alpha)^{\mathrm{op}}$. In other words, if $\alpha=(\alpha_1, \ldots, \alpha_k)$, then $\alpha^{\mathrm{op}}=(\alpha_k, \ldots, \alpha_1)$.

The ambient spaces of the initial part of our paper are the spaces of symmetric and quasisymmetric functions, $Sym$ and $QSym$. For basics of symmetric and quasisymmetric functions see \cite{EC}. A composition $\alpha=(\alpha_1,\ldots,\alpha_k)\models n$ defines the \textit{monomial quasisymmetric function}

\[M_\alpha=\sum_{i_1<\cdots<
i_k}x_{i_1}^{\alpha_1}\cdots x_{i_k}^{\alpha_k}.\] Alternatively, we write $M_I=M_{\mathrm{comp}(I)},\ I\subseteq[n-1]$. Another basis of $QSym$ is made up of \textit{fundamental quasisymmetric functions }

\begin{equation}\label{fundamental}
F_I=\sum_{\substack{1\leq i_1\leq i_2\leq\cdots\leq i_n\\
                  i_j<i_{j+1} \ \mathrm{for \ each} \ j\in I}}x_{i_1}x_{i_2}\cdots x_{i_n}, \quad I\subseteq[n-1],
\end{equation}
which are expressed in the monomial basis by
\begin{equation}\label{montofund}
F_I=\sum_{I\subseteq J}M_J, \quad I\subseteq[n-1].
\end{equation}

On the space $QSym$, there is an automorphism $\omega: QSym\rightarrow QSym$, defined on this basis by $\omega(F_I)=F_{I^c},$ see \cite{SR}. Clearly, $\omega$ is an involution.

The \textit{principal specialization} $\mathrm{ps}^1:QSym\rightarrow\mathbf{k}[m]$ is an algebra homomorphism to the polynomial algebra defined by
\[\mathrm{ps}^1(\Phi)(m)=\Phi(\underbrace{1,\ldots,1}_{\text{$m$ ones}},0,0,\ldots).\]

The algebra of symmetric functions $Sym$ is a subalgebra of $QSym$  that consists of quasisymmetric functions which are invariant under the action of permutations on the set of variables. There are many bases of this vector space. For partition $\lambda$, \textit{monomial symmetric function} $m_\lambda$ is defined as \[m_\lambda=\sum_{\alpha\sim\lambda}M_\alpha,\] where the summation runs over all compositions $\alpha$ whose parts, when arranged in nonincreasing order, yield $\lambda$. The $i$th \textit{elementary symmetric function} is given by 
\[e_0=1\hspace{5mm} \textrm{ and } \hspace{5mm}e_i=\sum_{j_1<j_2<\cdots<j_i}x_{j_1}x_{j_2}\cdots x_{j_i} \] for $i\geq 1$.
For partition $\lambda=(\lambda_1, \lambda_2, \ldots, \lambda_k)$, we define
\[e_{\lambda}=e_{\lambda_1}e_{\lambda_2}\cdots e_{\lambda_k}.\]
 The $i$th \textit{power sum symmetric function} is 
\[p_0=1 \hspace{5mm}\text{ and }\hspace{5mm}p_i=\sum_{j=1}^{\infty}x_j^i\]
for $i\geq 1$ and again, for $\lambda=(\lambda_1, \lambda_2, \ldots, \lambda_k)$ we have
\[p_{\lambda}=p_{\lambda_1}p_{\lambda_2}\cdots p_{\lambda_k}.\] 
 The \textit{complete homogeneous symmetric function} $h_\lambda$ is  \[h_\lambda=\omega(e_\lambda).\]
Finally, for $\lambda=(\lambda_1, \ldots, \lambda_k)$, we define \textit{Schur function} $s_\lambda$ by \[s_\lambda=\mathrm{det}(h_{\lambda_i-i+j})_{i, j=1}^k.\]  It turns out that $\omega(p_\lambda)=(-1)^{|\lambda|-l(\lambda)}p_\lambda$ and $\omega(s_\lambda)=s_{\lambda'}$, where $\lambda'$ denotes the conjugate partition of $\lambda$ whose parts $\lambda_i'$ are $|\{j\mid \lambda_j\geq i\}|$.

 The collections $\{m_\lambda\}, \{e_\lambda\}, \{p_\lambda\}, \{h_\lambda\}$ and $\{s_\lambda\}$, when $\lambda$ runs over all partitions are bases of $Sym$. If $u=\{u_i\}$ is a basis of some vector space $V$ and $v\in V$, we write $[u_i]v$ for the coefficient of $u_i$ in the $u-$expansion of $v$.

A \textit{graph} $G$ is a pair $G=(V, E)$, where $V$ is a finite set and $E$ is a collection of unordered pairs of elements of $V$. Elements of $V$ are called \textit{vertices} and elements of $E$ are called \textit{edges}. Two graphs $G=(V, E)$ and $H=(V', E')$ are \textit{isomorphic} if there is a bijection $f: V\rightarrow V'$ such that $\{u, v\}\in E\iff \{f(u), f(v)\}\in E'$.
The \textit{restriction} of a graph $G=(V,E)$ on a subset $S\subseteq V$ is the graph $G|_S=(S,E|_S)$, where $E|_S=\{\{u,v\}\in E\  |\ u,v\in S\}$.
We say that $G=(V, E)$ is \textit{discrete} if $E=\emptyset$ and that it is \textit{complete} (or a \textit{clique}) if $E=\binom{V}{2}$. 

If $G=(V, E)$, we call a function $f: V\rightarrow\mathbb{N}$ a \textit{proper coloring} of $G$ if $\{u, v\}\in E\implies f(u)\neq f(v)$. The \textit{chromatic symmetric function} $X_G$ of graph $G=(V, E)$ with vertex set $V=\{v_1, \ldots, v_n\}$ is \[X_G=\sum_f x_{f(v_1)}x_{f(v_2)}\cdots x_{f(v_n)},\]
where the sum is over all proper colorings $f$ of $G$ \cite{SR}. The \textit{chromatic polynomial} of a graph $G$ is the principal specialization of its chromatic function \[\chi_G(m)=\mathrm{ps}^1(X_G)(m),\] i.e. the number of ways to color a graph properly with at most $m$ different colors.

A {\it digraph} $X$ is a pair $X=(V,E)$, where $V$ is a finite set and $E$ is a collection
$E\subseteq V\times V$. Elements $u\in V$ are \textit{vertices} and
elements $(u,v)\in E$ are \textit{edges} of the digraph $X$. We say that an edge $(u, u)$ is a \textit{loop}. Two digraphs $X=(V, E)$ and $Y=(V', E')$ are \textit{isomorphic} if there is a bijection $f: V\rightarrow V'$ such that $(u, v)\in E\iff (f(u), f(v))\in E'$. 

If $X=(V, E)$ is a digraph, its \textit{complementary digraph} is the digraph $\overline{X}=(V, (V\times V)\setminus E)$ and its \textit{opposite digraph} is $X^{op}=(V, E')$, where $E'=\{(v, u) \ \mid \ (u, v)\in E\}$. The \textit{restriction} of a digraph $X=(V,E)$ on a subset $S\subseteq V$ is the
digraph $X|_S=(S,E|_S)$, where $E|_S=\{(u,v)\in E\  |\ u,v\in S\}$. For digraphs $X=(V,E)$ and $Y=(V',E')$, the product $X\cdot Y$ is defined as the digraph on the disjoint union $V\sqcup V'$ with the set of directed edges \[E\cup E'\cup\{(u,v)\ |\ u\in V, v\in V'\}.\] 

If $X=(V, E)$, for a $V$-listing $\sigma=(\sigma_1,\ldots,\sigma_n)\in\Sigma_V$, define the $X$-{\it descent set} as

\[X\mathrm{Des}(\sigma)=\{1\leq i\leq n-1\ | \ (\sigma_i,\sigma_{i+1})\in E\}.\]

Grinberg and Stanley assigned to a digraph $X$ a generating function for $X$-descent sets  expanded in the basis of fundamental quasisymmetric functions

\begin{equation}\label{descents}
U_X=\sum_{\sigma\in\Sigma_V}F_{X\mathrm{Des}(\sigma)}
\end{equation}
and named it the \textit{Redei-Berge symmetric function}, see \cite{S}. This definition, together with the fact that $\omega(F_I)=F_{I^c}$, easily gives the following equation

\begin{equation}\label{antipod}
\omega(U_X)=U_{\overline{X}}.
\end{equation}

The \textit{Redei-Berge polynomial} of a digraph $X$ is the principal specialization of its Redei-Berge function: \[u_X(m)=\mathrm{ps}^1(U_X)(m).\]

\begin{theorem}\label{antipodpolinom}
    \cite[Theorem 5.8]{GS}For any digraph $X=(V, E)$, \[u_X(m)=(-1)^{|V|}u_{\overline{X}}(-m).\]
\end{theorem}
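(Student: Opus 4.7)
The plan is to combine the antipode identity (\ref{antipod}), which gives $\omega(U_X)=U_{\overline X}$, with the classical principal specialization reciprocity for quasisymmetric functions: for any $\Phi\in QSym$ homogeneous of degree $n$,
\[ \mathrm{ps}^1(\omega(\Phi))(m) = (-1)^n\,\mathrm{ps}^1(\Phi)(-m). \]
Granted this, applying it to $\Phi=U_X$ (which is homogeneous of degree $|V|$) and using (\ref{antipod}) yields $u_{\overline X}(m)=(-1)^{|V|}u_X(-m)$, and substituting $-m$ for $m$ and multiplying by $(-1)^{|V|}$ produces the claimed identity $u_X(m)=(-1)^{|V|}u_{\overline X}(-m)$.

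The real content is the reciprocity, which I would verify directly on the fundamental basis, since both $\omega$ and $\mathrm{ps}^1$ are linear. Starting from (\ref{fundamental}), $\mathrm{ps}^1(F_I)(m)$ counts sequences $1\le i_1\le\cdots\le i_n\le m$ that are strict at the positions in $I$. The substitution $j_r=i_r-|\{s\in I:s<r\}|$ bijects these with arbitrary weakly increasing sequences of length $n$ taking values in $\{1,\ldots,m-|I|\}$, giving
\[ \mathrm{ps}^1(F_I)(m)=\binom{m-|I|+n-1}{n}. \]
Since $\omega(F_I)=F_{I^c}$ and $|I^c|=n-1-|I|$, we have $\mathrm{ps}^1(\omega(F_I))(m)=\binom{m+|I|}{n}$. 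The polynomial identity $\binom{n-1-y}{n}=(-1)^n\binom{y}{n}$ applied with $y=m+|I|$ turns this into $(-1)^n\,\mathrm{ps}^1(F_I)(-m)$, proving the reciprocity on the basis and hence in general.

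A fully self-contained alternative would avoid invoking $\omega$ and instead expand $u_{\overline X}(m)$ directly via (\ref{descents}): since $\overline X$ contains exactly the non-edges of $X$ (and no listing contributes a loop), one has $\overline X\mathrm{Des}(\sigma)=[n-1]\setminus X\mathrm{Des}(\sigma)$, so $u_{\overline X}(m)=\sum_{\sigma}\mathrm{ps}^1(F_{X\mathrm{Des}(\sigma)^c})(m)$ and the reciprocity above collapses the sum to $(-1)^{|V|}u_X(-m)$. I expect the principal obstacle to be cosmetic rather than conceptual: carefully verifying the binomial count $\binom{m-|I|+n-1}{n}$ and the sign in $\binom{n-1-y}{n}=(-1)^n\binom{y}{n}$, and then correctly bookkeeping that $\overline X\mathrm{Des}(\sigma)$ is the set-theoretic complement of $X\mathrm{Des}(\sigma)$ in $[n-1]$ (which is exactly the shape on which $\omega$ acts via $\omega(F_I)=F_{I^c}$). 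Everything beyond those checks is immediate.
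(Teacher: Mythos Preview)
Your proof is correct. The paper does not give its own proof of this statement: it is quoted from \cite{GS} as a background result in the Preliminaries, so there is nothing in the present paper to compare against. Your argument via the principal-specialization reciprocity $\mathrm{ps}^1(\omega(\Phi))(m)=(-1)^n\,\mathrm{ps}^1(\Phi)(-m)$, checked on the fundamental basis using $\mathrm{ps}^1(F_I)(m)=\binom{m-|I|+n-1}{n}$ and the negation identity for binomial coefficients, is a standard and fully valid way to derive this; combined with $\omega(U_X)=U_{\overline X}$ it gives the claim immediately.
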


 Let $X=(V, E)$ be a digraph and let $\textfrak{S}_V$ be the group of permutations of $V$. Then, we define
    \[\textfrak{S}_V(X)=\{\pi\in \textfrak{S}_V\mid \textrm{ each non-trivial cycle of }\pi \textrm{ is a cycle of } X\},\]
    \[\textfrak{S}_V(X, \overline{X})=\{\pi\in \textfrak{S}_V\mid \textrm{ each cycle of }\pi \textrm{ is a cycle of } X \textrm{ or a cycle of }\overline{X}\}.\]

The expansion of the Redei-Berge function in the power sum basis is given in \cite{S}. Let $\mathrm{type}(\pi)$ denote the partition whose entries are the lengths of the
cycles of $\pi$. 

\begin{theorem}\cite[Theorem 1.31]{S}\label{pbaza} Let $X=(V, E)$ be a digraph. For any $\pi\in \textfrak{S}_V$, let $\varphi(\pi):=\sum_{\gamma}(\ell (\gamma)-1),$ where the summation runs over all cycles $\gamma$ of $\pi$ that are cycles in $X$ and $\ell(\gamma)$ denotes the length of the cycle $\gamma$. Then, \[U_X=\sum_{\pi\in\textfrak{S}_V(X, \overline{X})}(-1)^{\varphi(\pi)}p_{\mathrm{type}(\pi)}.\]
    
\end{theorem}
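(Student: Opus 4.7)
The strategy is to expand both sides of the claimed identity as formal power series in $x_1, x_2, \dots$ and compare the coefficient of each monomial. Substituting definition~\eqref{fundamental} of $F_I$ into $U_X = \sum_\sigma F_{X\mathrm{Des}(\sigma)}$ writes the left side as a sum over pairs $(\sigma, f)$ with $f\colon V \to \mathbb{N}_{>0}$ satisfying $f(\sigma_1) \leq \cdots \leq f(\sigma_n)$ and strict inequality at every $j \in X\mathrm{Des}(\sigma)$. Swapping the order of summation and fixing $f$, an admissible $\sigma$ must enumerate $V$ in increasing $f$-order; within each level set $V_k = f^{-1}(k)$, the consecutive pairs of $\sigma$ are all non-$X$-edges, so the restriction of $\sigma$ to $V_k$ traverses a Hamiltonian path in $\overline{X}|_{V_k}$. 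Consequently,
\[ U_X = \sum_{f\colon V\to\mathbb{N}_{>0}} \Bigl(\prod_{k} H(\overline{X}|_{V_k})\Bigr) \prod_{v\in V} x_{f(v)}, \]
where $H(Z)$ denotes the number of Hamiltonian paths of the digraph $Z$.

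For the right side, expand $p_{\mathrm{type}(\pi)} = \prod_\gamma \sum_j x_j^{\ell(\gamma)}$; a choice of summation index per cycle is equivalent to a function $f\colon V\to\mathbb{N}_{>0}$ constant on the cycles of $\pi$. Any $\pi \in \textfrak{S}_V(X,\overline{X})$ with this property factors as $\pi = \prod_k \pi_k$ with $\pi_k \in \textfrak{S}_{V_k}(X|_{V_k}, \overline{X}|_{V_k})$, and $\varphi$ is additive across this factorization. After swapping the summations, the coefficient of $\prod_v x_{f(v)}$ on the right becomes $\prod_k Q(X|_{V_k})$, where
\[ Q(Y) := \sum_{\pi \in \textfrak{S}_V(Y,\overline{Y})} (-1)^{\varphi(\pi)} \]
for any digraph $Y$. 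Matching the two sides coefficient by coefficient reduces the theorem to the single combinatorial identity $H(\overline{Y}) = Q(Y)$ for every digraph $Y$ on a finite vertex set $V$.

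The plan for this identity is a sign-reversing involution on $\textfrak{S}_V(Y,\overline{Y})$: fix a total order on $V$ and, for each $\pi$ that does not already realize a Hamiltonian path of $\overline{Y}$ on $V$, locate the smallest vertex $v_0$ witnessing a ``non-path feature'' (a non-trivial $Y$-cycle of $\pi$, or an $\overline{Y}$-cycle admitting a canonical merge with an adjacent one), then toggle the local cycle structure at $v_0$ so as to flip the parity of $\varphi$. The fixed points should biject with the Hamiltonian paths of $\overline{Y}$. The main obstacle is engineering this involution canonically across the many local configurations (short versus long cycles, loops, adjacency between $Y$- and $\overline{Y}$-cycles). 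A cleaner if more abstract substitute is via combinatorial species: encode the family $\{H(\overline{Y}|_S)\}_S$ as a $\mathrm{Seq}$-species of signed Hamiltonian $Y$-paths on subsets $S\subseteq V$, encode $\{Q(Y|_S)\}_S$ as an exponential of the cycle-cover building block, and verify the resulting exponential-generating-function identity using $-\log(1-A) = \sum_{n\geq 1} A^n/n$, which ultimately reduces to a cyclic-versus-linear contribution comparison on each subset.
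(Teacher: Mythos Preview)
The paper does not supply its own proof of this theorem; it is quoted from Grinberg--Stanley \cite{S}. So there is no ``paper's proof'' to compare against, and your attempt must stand on its own.

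Your reduction is correct and clean. The expansion of $U_X$ via \eqref{fundamental} does give $\sum_f \bigl(\prod_k H(\overline{X}|_{V_k})\bigr)\prod_v x_{f(v)}$, and the power-sum side does factor as $\sum_f \prod_k Q(X|_{V_k})\prod_v x_{f(v)}$ with $Q(Y)=\sum_{\pi\in\textfrak{S}_V(Y,\overline{Y})}(-1)^{\varphi(\pi)}$. So the theorem is indeed equivalent to the single identity $H(\overline{Y})=Q(Y)$ for every finite digraph $Y$; in fact this identity is exactly the specialisation of the theorem at $x_1=1$, $x_i=0$ for $i\ge 2$, so you have shown that the full symmetric-function statement follows from its one-variable shadow. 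That is a genuine simplification.

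The gap is that you do not prove $H(\overline{Y})=Q(Y)$. The paragraph beginning ``The plan for this identity'' is not a proof but a wish list: ``locate the smallest vertex $v_0$ witnessing a non-path feature \dots\ then toggle the local cycle structure at $v_0$'' does not specify the toggle, does not explain why it is an involution, and does not explain why its fixed points biject with Hamiltonian paths of $\overline{Y}$. You yourself flag the obstacle (``engineering this involution canonically across the many local configurations''). The species alternative is likewise only a slogan: you name $\mathrm{Seq}$ and $-\log(1-A)$ but never identify $A$, never state the EGF identity to be checked, and never explain how the signed $Y$-cycles and unsigned $\overline{Y}$-cycles combine. Since $H(\overline{Y})=Q(Y)$ is the entire combinatorial content of the theorem (it is the generalised Berge formula for Hamiltonian paths), leaving it unproved leaves the theorem unproved. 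To close the gap you need either a fully specified sign-reversing involution, or a complete determinant/permanent style argument, or a direct appeal to the source \cite{S}.
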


Let $P$ be a finite poset.  The relation that induces poset $P$ will be denoted as $\leq_P$ and the corresponding strict order relation as $<_P$. Let $\mathrm{Min}(P)$ denote the set of all minimal elements of $P$ and $\mathrm{Max}(P)$ the set of all maximal elements of $P$.
For a $P$-listing $\sigma=(\sigma_1, \ldots, \sigma_n)$, we say that it is a \textit{linear extension} of $P$ if there do not exist $i<j$ such that $\sigma_{j}\leq_P\sigma_i$. A \textit{quasi-linear extension} of $P$ is defined in \cite{MS} as a $P$-listing $(\sigma_1, \ldots, \sigma_n)$  such that $\sigma_{i+1}\nleq_P \sigma_i$ for all $i\in[n-1]$. Clearly, every linear extension is also a quasi-linear extension. We denote by $P^*$ the \textit{dual poset} of $P$, where $p\leq_{P^*}q$ if and only if $q\leq_P p$. For $p, q\in P$, $[p, q]=\{r\in P\mid p\leq_Pr\leq_Pq\}.$
The \textit{incomparability graph} of $P$, denoted by $\mathrm{inc}(P)$, is a graph with $P$ as the set of vertices, while $\{i, j\}$ is an edge of $\mathrm{inc}(P)$ if and only if $i$ and $j$ are incomparable in $P$.

The \textit{ordinal sum} $PQ$ of two posets $P$ on $[m]$ and $Q$ on $[n]$ is the poset on $[m+n]$ with $\leq_{PQ}$ defined as follows: 
\begin{itemize}
\item  for $x, y\in[m]$, $x\leq_{PQ}y$ if and only if $x\leq_P y$,
\item  for $x, y \in [n]$,  $x+m\leq_{PQ} y+m$ if and only if $x\leq_Q y$  and
\item for all $x\in [m]$ and $y\in \{m+1, \ldots, m+n\}$, $x\leq_{PQ} y$ 
\end{itemize}
 A poset $P$ is \textit{irreducible} if it is not an ordinal sum of two nonempty posets.

The authors in \cite{MS} defined the Redei-Berge function $U_P$ of a poset $P$ as the Redei-Berge function $U_{D_P}$ of an appropriate digraph $D_P$ assigned to $P$. The set of vertices of $D_P$ is $P$, while $(i, j)$ is an edge of $D_P$ if and only if $i<_P j$. Note that \[U_P=\sum_{\sigma\in\Sigma_P}F_{P\mathrm{Des}(\sigma)},\]
where $P\mathrm{Des}(\sigma)=\{1\leq i\leq n-1\ | \ \sigma_i <_P\sigma_{i+1}\}.$


\section{The connection between $U_P$ and $X_{\mathrm{inc}(P)}$}

This section is a starting point for our further research. We point out that a different approach to the proofs of the theorems in this section can be found in \cite{TC}. 

Recall that for a given graph $G=(V, E)$, $S\subseteq V$ is called \textit{stable} if $G|_S$ is a discrete graph. A partition, or a composition of $V$ is  \textit{stable} if its every block is stable. If $P$ is a poset and $\sigma\in \Sigma_P$, let $A_P(\sigma)=\{i\in[n-1]\mid \sigma_i \nleq_P \sigma_{i+1}\}$.

\begin{lemma} \label{fbaza}
    For any poset $P$, \[X_{\mathrm{inc}(P)}=\sum_{\sigma\in \Sigma_P}F_{A_P(\sigma)}.\] 
\end{lemma}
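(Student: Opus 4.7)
The plan is to compare the coefficient of an arbitrary monomial on both sides, using the definition \eqref{fundamental} to expand each fundamental quasisymmetric function on the right-hand side as
\[F_{A_P(\sigma)}=\sum_{\substack{1\leq i_1\leq\cdots\leq i_n\\ i_j<i_{j+1}\text{ if } j\in A_P(\sigma)}} x_{i_1}\cdots x_{i_n}.\]
To each pair $(\sigma,(i_1,\ldots,i_n))$ appearing in the resulting double sum, I would associate the map $f_\sigma:P\to\mathbb{N}$ given by $f_\sigma(\sigma_j)=i_j$. Since the monomial contributed by the pair equals $\prod_{v\in P}x_{f_\sigma(v)}$, it suffices to show that $(\sigma,(i_1,\ldots,i_n))\mapsto f_\sigma$ is a bijection onto the proper colorings of $\mathrm{inc}(P)$ (with no weighting), as the definition of $X_{\mathrm{inc}(P)}$ then gives the equality term by term.

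The first step would be to verify properness of $f_\sigma$. Suppose $f_\sigma(\sigma_j)=f_\sigma(\sigma_k)$ with $j<k$; since $(i_\ell)$ is nondecreasing this forces $i_j=i_{j+1}=\cdots=i_k$, so none of the indices $j,\ldots,k-1$ lies in $A_P(\sigma)$. By the definition of $A_P$ this gives $\sigma_\ell\leq_P\sigma_{\ell+1}$ for each such $\ell$, and because $\sigma$ is a listing with distinct entries the inequalities are strict. Hence $\sigma_j<_P\sigma_k$, so $\{\sigma_j,\sigma_k\}$ is not an edge of $\mathrm{inc}(P)$, and $f_\sigma$ is proper.

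Conversely, given a proper coloring $f$ of $\mathrm{inc}(P)$, every color class of $f$ is a pairwise comparable subset of $P$, i.e.\ a chain. There is then a unique listing $\sigma\in\Sigma_P$ that orders the elements by nondecreasing $f$-value with elements of each color class appearing in $<_P$-increasing order; setting $i_j=f(\sigma_j)$ yields the unique preimage of $f$, because equal consecutive values $i_j=i_{j+1}$ occur only within a single color class, where by construction $\sigma_j<_P\sigma_{j+1}$, forcing $j\notin A_P(\sigma)$ and thus making the descent condition vacuous at such $j$. The main step to get right is the equivalence ``color class is independent in $\mathrm{inc}(P)$ $\iff$ it is a chain in $P$'' together with the strictness of $\sigma_j<_P\sigma_{j+1}$ at positions outside $A_P(\sigma)$; once these are in hand, the bijection is transparent and comparing coefficients of each monomial gives the lemma.
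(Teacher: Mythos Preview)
Your proof is correct and follows essentially the same approach as the paper: both arguments rest on the observation that independent sets of $\mathrm{inc}(P)$ are exactly the chains of $P$. The only difference is packaging. The paper compares the coefficient of $M_\alpha$ on both sides, interpreting each as the number of ordered decompositions of $P$ into chains of sizes $\alpha_1,\ldots,\alpha_k$, whereas you work monomial-by-monomial and set up an explicit bijection between admissible pairs $(\sigma,(i_1,\ldots,i_n))$ and proper colorings of $\mathrm{inc}(P)$. Your bijective formulation makes the correspondence with individual colorings more transparent, while the paper's $M_\alpha$-coefficient comparison is slightly quicker to write down; the underlying combinatorics is identical.
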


\begin{proof}
    If $\alpha=(\alpha_1, \ldots, \alpha_k)$ is a composition of $|P|$, then \[[M_\alpha](\sum_{\sigma\in \Sigma_P}F_{A_P(\sigma)})=\#\{\sigma\in\Sigma_P\mid A_P(\sigma)\subseteq \mathrm{set}(\alpha)\}.\] Therefore, we need to count the listings $\sigma$ such that $A_P(\sigma)\subseteq\{\alpha_1, \alpha_1+\alpha_2, \ldots, \alpha_1+\cdots+\alpha_{k-1}\}$. This condition is equivalent to 

\begin{center}
  $\sigma_1\leq_P\sigma_2\leq_P\cdots\leq_P\sigma_{\alpha_1}$ \\  $\sigma_{\alpha_1+1}\leq_P\sigma_{\alpha_1+2}\leq_P\cdots\leq_P\sigma_{\alpha_1+\alpha_2}$ \\
   \;\;\vdots \notag \\
  $\sigma_{\alpha_1+\cdots+\alpha_{k-1}+1}\leq_P\sigma_{\alpha_1+\cdots+\alpha_{k-1}+2}\leq_P\ldots\leq_P\sigma_{n}.$
\end{center}
Hence, there are exactly $\#\{(P_1, \ldots, P_k)\models P\mid P_i \textrm{ is a chain with }\alpha_i\ \textrm{elements}\}$ such listings $\sigma$. On the other hand, the definition of the chromatic symmetric function immediately gives $[M_\alpha]X_{\mathrm{inc}(P)}=\#\{(P_1, \ldots, P_k)\models P\mid (P_1, \ldots, P_k) \textrm{ is a stable composition in $\mathrm{inc}(P)$ of type } \alpha\}$. The fact that stable subsets of $\mathrm{inc}(P)$ correspond to chains in $P$ completes the proof.
 \end{proof}

This lemma easily gives the following.

\begin{theorem}\label{incomparability}
    If $P$ is a poset  then  \[X_{\mathrm{inc}(P)}=\omega(U_P).\]
\end{theorem}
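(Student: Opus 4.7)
The plan is to combine Lemma \ref{fbaza} directly with the definition of $U_P$ and the action of $\omega$ on the fundamental basis. Recall from the preliminaries that $\omega(F_I)=F_{I^c}$, where the complement is taken inside $[n-1]$ with $n=|P|$. Applying $\omega$ termwise to
\[U_P=\sum_{\sigma\in\Sigma_P}F_{P\mathrm{Des}(\sigma)}\]
gives $\omega(U_P)=\sum_{\sigma\in\Sigma_P}F_{P\mathrm{Des}(\sigma)^c}$. So the whole content of the theorem reduces to a set-theoretic identification of $P\mathrm{Des}(\sigma)^c$ with $A_P(\sigma)$ for every listing $\sigma\in\Sigma_P$.

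This identification is the only genuine (though very small) step. For an arbitrary index $i\in[n-1]$, we have $i\in P\mathrm{Des}(\sigma)^c$ iff $\sigma_i\not<_P\sigma_{i+1}$. Because $\sigma$ is a listing, the entries $\sigma_i$ and $\sigma_{i+1}$ are distinct elements of $P$, so $\sigma_i<_P\sigma_{i+1}$ is equivalent to $\sigma_i\leq_P\sigma_{i+1}$. Negating, $\sigma_i\not<_P\sigma_{i+1}$ is equivalent to $\sigma_i\nleq_P\sigma_{i+1}$, which is exactly the defining condition for $i\in A_P(\sigma)$. Thus $P\mathrm{Des}(\sigma)^c=A_P(\sigma)$.

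Putting this together,
\[\omega(U_P)=\sum_{\sigma\in\Sigma_P}F_{P\mathrm{Des}(\sigma)^c}=\sum_{\sigma\in\Sigma_P}F_{A_P(\sigma)},\]
and the right-hand side equals $X_{\mathrm{inc}(P)}$ by Lemma \ref{fbaza}, which finishes the argument. There is no real obstacle here: the hard work has already been done in Lemma \ref{fbaza}, which translated $X_{\mathrm{inc}(P)}$ into a sum of fundamental quasisymmetric functions indexed by listings of $P$. The theorem is then just the observation that, on the level of individual listings, ``not a $P$-descent'' is the same as ``not comparable going up,'' because listings avoid repetitions.
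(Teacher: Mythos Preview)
Your proof is correct and essentially the same as the paper's. The only cosmetic difference is that the paper packages the computation as $A_P(\sigma)=\overline{D_P}\mathrm{Des}(\sigma)$ and then invokes Equation~\ref{antipod} ($\omega(U_X)=U_{\overline{X}}$), whereas you apply $\omega(F_I)=F_{I^c}$ directly to the $F$-expansion of $U_P$ and identify $P\mathrm{Des}(\sigma)^c=A_P(\sigma)$; since Equation~\ref{antipod} is itself derived from $\omega(F_I)=F_{I^c}$, the two arguments are the same.
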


\begin{proof}
    Since $A_P(\sigma)=\overline{D_P}\mathrm{Des(\sigma)}$, we have \[X_{\mathrm{inc}(P)}=\sum_{\sigma\in \Sigma_P}F_{A_P(\sigma)}=\sum_{\sigma\in\Sigma_P}F_{\overline{D_P}\mathrm{Des}(\sigma)}=U_{\overline{D_P}}=\]\[=\omega(U_{D_P})=\omega(U_P),\] where the fourth equality follows from Equation \ref{antipod}.
\end{proof}

\begin{corollary} \label{polinomi}
    If $P$ is a poset, then \[\chi_{\mathrm{inc}(P)}(m)=(-1)^{|P|}u_P(-m).\]
\end{corollary}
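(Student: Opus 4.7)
The plan is to apply the principal specialization $\mathrm{ps}^1$ to both sides of the identity furnished by Theorem \ref{incomparability} and then reduce to Theorem \ref{antipodpolinom}. Since the two polynomial statements in the corollary are merely the principal specializations of $X_{\mathrm{inc}(P)}$ and $U_P$, essentially all of the work has already been done; what remains is simply to track the effect of $\omega$ through $\mathrm{ps}^1$.

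First, by definition of the chromatic polynomial, $\chi_{\mathrm{inc}(P)}(m) = \mathrm{ps}^1(X_{\mathrm{inc}(P)})(m)$. Theorem \ref{incomparability} rewrites this as $\mathrm{ps}^1(\omega(U_P))(m)$, and Equation \ref{antipod} applied to $X = D_P$ turns $\omega(U_P)$ into $U_{\overline{D_P}}$. Thus
\[\chi_{\mathrm{inc}(P)}(m) = \mathrm{ps}^1(U_{\overline{D_P}})(m) = u_{\overline{D_P}}(m),\]
where the last equality uses the definition of the Redei--Berge polynomial.

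Next, I would invoke Theorem \ref{antipodpolinom} with $X = D_P$, which gives $u_{D_P}(m) = (-1)^{|P|} u_{\overline{D_P}}(-m)$. Substituting $m \mapsto -m$ and using $(-1)^{|P|} = (-1)^{-|P|}$ in the appropriate sense (i.e.\ multiplying both sides by $(-1)^{|P|}$), one obtains $u_{\overline{D_P}}(m) = (-1)^{|P|} u_{D_P}(-m) = (-1)^{|P|} u_P(-m)$. Combining with the previous display yields the desired identity.

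There is no real obstacle here: the corollary is a direct consequence of the two results just cited, and the only mild bookkeeping step is to check that substituting $m \mapsto -m$ in Theorem \ref{antipodpolinom} flips the roles of $D_P$ and $\overline{D_P}$ correctly, which is immediate since $(-1)^{2|P|}=1$.
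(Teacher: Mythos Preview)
Your proof is correct and follows essentially the same route as the paper: use Theorem \ref{incomparability} together with Equation \ref{antipod} to obtain $\chi_{\mathrm{inc}(P)}(m)=u_{\overline{D_P}}(m)$, and then apply Theorem \ref{antipodpolinom} to convert this into $(-1)^{|P|}u_{D_P}(-m)=(-1)^{|P|}u_P(-m)$. The only cosmetic difference is that the paper applies Theorem \ref{antipodpolinom} directly with $X=\overline{D_P}$ (using $\overline{\overline{D_P}}=D_P$), whereas you apply it with $X=D_P$ and then substitute $m\mapsto -m$; these are equivalent.
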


\begin{proof}
    Theorem \ref{incomparability} and Equation \ref{antipod} yield $X_{\mathrm{inc}(P)}=\omega(U_P)=\omega(U_{D_P})=U_{\overline{D_P}}.$
    Therefore, $\chi_{\mathrm{inc}(P)}(m)=u_{\overline{D_P}}(m).$ However, Theorem \ref{antipodpolinom} implies that $u_{\overline{D_P}}(m)=(-1)^{|P|}u_{D_P}(-m),$ which is exactly $(-1)^{|P|}u_P(-m)$.
\end{proof}

\begin{remark}
    For readers familiar with the theory of (combinatorial) Hopf algebras, we note that Theorem \ref{incomparability} can be seen as the following commutative diagram of Hopf algebras. Here, $\mathcal{P}_1$ and $\mathcal{P}_2$ denote combinatorial Hopf algebras of posets from \cite{ABM} and \cite{MS} respectively, while $\mathcal{D}$, $\mathcal{G}$ and $Sym$ are combinatorial Hopf algebras of digraphs \cite{GS}, graphs \cite{ABS} and symmetric functions \cite{ABS}.
\begin{center}
\begin{tikzcd}
             & \mathcal{P}_1 \arrow[rr, "Id"] \arrow[ld] &  & \mathcal{P}_2 \arrow[ll] \arrow[rd] &              \\
\mathcal{G} \arrow[rd] &                              &  &                         & \mathcal{D} \arrow[ld] \\
             & Sym \arrow[rr, "\omega"]     &  & Sym \arrow[ll]          &             
\end{tikzcd}
\end{center}
\end{remark}
In this diagram, $\mathcal{P}_1\rightarrow\mathcal{G}$ stands for $P\mapsto \mathrm{inc}(P)$, $\mathcal{P}_2\rightarrow\mathcal{D}$ for $P\mapsto D_P$, while $\mathcal{G}\rightarrow Sym$ and $\mathcal{D}\rightarrow Sym$ are respectively $G\mapsto X_G$ and $D\mapsto U_D$. Moreover, these four morphisms of Hopf algebras are even morphisms of combinatorial Hopf algebras. The remaining two morphisms do not satisfy this property since $\omega$ is not compatibile with the character on $Sym$, while the characters on $\mathcal{P}_1$ and $\mathcal{P}_2$ are different.

\section{Going back and forth}

According to Theorem \ref{incomparability}, the theory of the chromatic function can be used in examining properties of the Redei-Berge function and vice versa. The tools developed in the environment of one of these functions, when translated to the language of the other, yield unexpected results. In this section, we give some representative examples of exploiting this interconnection.

\begin{theorem}
    If $P$ is a poset that is not a chain, then the number of quasi-linear extensions of $P$ is even.
\end{theorem}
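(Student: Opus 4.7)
The plan is to count quasi-linear extensions of $P$ by identifying them with acyclic orientations of $\mathrm{inc}(P)$, then applying the classical fact that the number of acyclic orientations of a graph with at least one edge is even.

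First I would evaluate the principal specialization $u_P(1) = U_P(1,0,0,\ldots)$. Since $F_I(1,0,0,\ldots)$ equals $1$ when $I=\emptyset$ and vanishes otherwise, the identity $U_P=\sum_{\sigma\in\Sigma_P}F_{P\mathrm{Des}(\sigma)}$ immediately gives
\[u_P(1)=\#\{\sigma\in\Sigma_P\mid P\mathrm{Des}(\sigma)=\emptyset\},\]
namely the number of $P$-listings with no consecutive strict $P$-ascent. The reversal $\sigma\mapsto\sigma^{\mathrm{rev}}$ turns ascents into descents, so it bijects these listings with the quasi-linear extensions of $P$; hence $u_P(1)$ equals the number of quasi-linear extensions of $P$. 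Carefully distinguishing the no-ascent condition appearing in $u_P(1)$ from the no-descent condition defining quasi-linear extensions, and using reversal to equate them, is the most delicate point of the argument.

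Next, Corollary \ref{polinomi} gives $u_P(1)=(-1)^{|P|}\chi_{\mathrm{inc}(P)}(-1)$, while Stanley's classical reciprocity theorem asserts that $(-1)^{|V(G)|}\chi_G(-1)$ equals $a(G)$, the number of acyclic orientations of $G$. Combining these, the number of quasi-linear extensions of $P$ equals $a(\mathrm{inc}(P))$. Finally, if $P$ is not a chain then $\mathrm{inc}(P)$ has at least one edge, and the global edge-reversal map on orientations is a fixed-point-free involution on the acyclic orientations of $\mathrm{inc}(P)$: reversing all arrows sends directed cycles to directed cycles, and the presence of any edge rules out fixed points. Hence $a(\mathrm{inc}(P))$ is even, and the theorem follows.
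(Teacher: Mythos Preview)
Your proof is correct, but it takes a different route from the paper's at the final step. Both arguments rest on Corollary~\ref{polinomi} and on the identification of $u_P(1)$ with the number of quasi-linear extensions of $P$ (the paper simply cites this from \cite{MS}, while you supply a short proof via the reversal bijection). Where they diverge is in extracting the parity statement. The paper observes that any polynomial with integer coefficients satisfies $p(1)\equiv p(-1)\pmod 2$, and since $\chi_{\mathrm{inc}(P)}(1)$ counts proper $1$-colorings of $\mathrm{inc}(P)$, it vanishes whenever $P$ is not a chain; hence $\chi_{\mathrm{inc}(P)}(-1)$, and with it $u_P(1)$, is even. You instead invoke Stanley's reciprocity $(-1)^{|V|}\chi_G(-1)=a(G)$ to rewrite $u_P(1)$ as the number of acyclic orientations of $\mathrm{inc}(P)$, and then pair these off by global edge-reversal.

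The paper's argument is shorter and fully self-contained, needing nothing beyond the integrality of the chromatic polynomial. Your approach costs an extra classical theorem but pays it back with a genuine combinatorial dividend: it exhibits the equality between the number of quasi-linear extensions of $P$ and the number of acyclic orientations of $\mathrm{inc}(P)$, and moreover produces an explicit fixed-point-free involution witnessing the evenness.
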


\begin{proof}
    Since the chromatic polynomial has integer coefficients, $\chi_{\mathrm{inc}(P)}(1)$ and $\chi_{\mathrm{inc}(P)}(-1)=(-1)^{|P|}u_P(1)$ are of the same parity. However, $u_P(1)$ is the number of quasi-linear extensions of $P$ \cite{MS}, while $\chi_{\mathrm{inc}(P)}(1)$ is the number of proper colorings of $\mathrm{inc}(P)$ with one color, i.e. 1 if $P$ is a chain and 0 otherwise.
\end{proof}
Recall that a \textit{tournament} is a loopless digraph $(V, E)$ such that for every two distinct $u, v\in V$ exactly one of $(u, v)$ and $(v, u)$ is in $E$. The previous theorem can be seen as a converse of Redei's theorem, which states that the number of Hamiltonian paths in any tournament is odd. Namely, the only posets whose digraphs are tournaments are exactly the chains, and there is only one quasi-linear extension for them.

If two posets have the same Redei-Berge function, then they have the same number of chains of any length \cite[Corollary 4.9]{MS}. Consequently, if two posets that are disjoint union of chains have the same Redei-Berge function, then they are isomorphic \cite[Theorem 4.11]{MS}. Therefore, this property also holds for the chromatic function of their incomparability graphs. Recall that a graph $G=(V, E)$ is \textit{complete} $k-$\textit{partite} if there exists a partition $(V_1, \ldots, V_k)\vdash V$ for which $E=\{\{u, v\}\mid u\in V_i, v\in V_j, i\neq j\}$.

\begin{theorem}
    Let $G$ and $H$ be complete $k-$ and $l-$partite graphs respectively. If $X_G=X_H$, then $G$ and $H$ are isomorphic.
\end{theorem}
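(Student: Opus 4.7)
The plan is to translate the statement about chromatic functions of complete multipartite graphs into a statement about Redei-Berge functions of disjoint unions of chains, then invoke the quoted distinguishability result \cite[Theorem 4.11]{MS}.

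First, I would make the following structural observation: a graph is complete multipartite if and only if it is the incomparability graph of a disjoint union of chains. Indeed, if $P$ is a disjoint union of chains $C_1,\ldots,C_k$ of sizes $n_1,\ldots,n_k$, then two elements of $P$ are incomparable in $P$ precisely when they lie in distinct chains, so $\mathrm{inc}(P)$ is the complete $k$-partite graph with parts $C_1,\ldots,C_k$. Conversely, given a complete $k$-partite graph with parts $V_1,\ldots,V_k$, endowing each $V_i$ with an arbitrary total order and declaring elements of distinct parts incomparable produces a poset $P$ which is a disjoint union of chains and satisfies $\mathrm{inc}(P)=G$. Applying this to the hypothesis, write $G=\mathrm{inc}(P)$ and $H=\mathrm{inc}(Q)$ where $P$ is a disjoint union of $k$ chains and $Q$ is a disjoint union of $l$ chains.

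Second, I would use Theorem \ref{incomparability} to transfer the equality $X_G=X_H$ into the world of Redei-Berge functions: it gives $X_G=\omega(U_P)$ and $X_H=\omega(U_Q)$. Since $\omega$ is an involution on $QSym$, the assumption $X_G=X_H$ yields $U_P=U_Q$.

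Finally, I would invoke \cite[Theorem 4.11]{MS}, which says that two posets that are disjoint unions of chains and share the same Redei-Berge function are necessarily isomorphic. Hence $P\cong Q$, and applying $\mathrm{inc}(\cdot)$ to both sides (an operation that preserves isomorphism of graphs) gives $G\cong H$. There is no real obstacle here; the only point worth being careful about is the equivalence between complete multipartite graphs and incomparability graphs of disjoint unions of chains, which justifies the reduction to the setting where the cited distinguishability result applies. In particular, the numbers $k$ and $l$ need not be compared directly: the isomorphism $P\cong Q$ automatically forces the multisets of chain sizes, and therefore the multisets of part sizes of $G$ and $H$, to coincide.
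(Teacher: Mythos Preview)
Your proposal is correct and follows essentially the same approach as the paper: both recognize that complete multipartite graphs are precisely the incomparability graphs of disjoint unions of chains, use Theorem~\ref{incomparability} (together with the invertibility of $\omega$) to pass from $X_G=X_H$ to $U_P=U_Q$, and then invoke \cite[Theorem 4.11]{MS} to conclude $P\cong Q$ and hence $G\cong H$. You have simply spelled out the steps in more detail than the paper does.
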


\begin{proof} 
    This follows immediately from previous consideration, since $G$ is the incomparability graph of a poset that consists of $k$ disjoint chains, while $H$ is the incomparability graph of a poset consisting of $l$ disjoint chains.
\end{proof}

In this spirit, we also note that two graphs with equal chromatic function have the same number of triangles as well \cite[Corollary 2.3]{OS}. Consequently,

\begin{theorem}
    If two posets have the same Redei-Berge function, then they have the same number of unordered triples in which the elements are incomparable.
\end{theorem}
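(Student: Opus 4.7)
The plan is to reduce this statement to the analogous (already cited) fact about chromatic functions of graphs by passing through Theorem \ref{incomparability}. Specifically, suppose $P$ and $Q$ are posets with $U_P=U_Q$. Applying the involution $\omega$ on $Sym$ to both sides, I would first conclude that $\omega(U_P)=\omega(U_Q)$, and then by Theorem \ref{incomparability} this translates to $X_{\mathrm{inc}(P)}=X_{\mathrm{inc}(Q)}$.

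Next I would invoke the result from \cite[Corollary 2.3]{OS} mentioned in the paragraph just above the statement: two graphs with equal chromatic symmetric function must have the same number of triangles. Applied to the graphs $\mathrm{inc}(P)$ and $\mathrm{inc}(Q)$, this gives that the number of triangles in $\mathrm{inc}(P)$ equals the number of triangles in $\mathrm{inc}(Q)$.

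Finally, I would make the elementary observation that a triangle in the incomparability graph $\mathrm{inc}(P)$ is precisely an unordered triple $\{x,y,z\}\subseteq P$ in which each pair is incomparable in $P$. Therefore the count of triangles in $\mathrm{inc}(P)$ coincides with the number of unordered triples of pairwise incomparable elements of $P$, and similarly for $Q$, which yields the claim.

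There is essentially no genuine obstacle here; the proof is a direct translation via the bridge provided by Theorem \ref{incomparability}, and it is a prototypical illustration of the philosophy announced at the start of Section 4, namely that properties established on one side (here, triangle-counting for the chromatic function) can be transported to the other side (triples of mutually incomparable elements detected by the Redei-Berge function) with no additional work beyond applying $\omega$ and identifying the combinatorial meaning of triangles in $\mathrm{inc}(P)$.
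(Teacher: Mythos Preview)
Your proof is correct and is exactly the argument the paper has in mind: the paper merely writes ``Consequently,'' after recalling \cite[Corollary 2.3]{OS}, and your proposal spells out the intended steps (apply $\omega$, use Theorem \ref{incomparability}, invoke the triangle-counting result, and identify triangles in $\mathrm{inc}(P)$ with pairwise incomparable triples).
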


We now focus our attention on various bases of $Sym$ whose elements are the chromatic functions and the Redei-Berge functions.

\begin{lemma} \label{povezan}
    A poset $P$ has a connected incomparability graph if and only if it is irreducible.
\end{lemma}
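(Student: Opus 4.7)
The plan is to prove each direction by unpacking the definitions. For the easy direction, suppose $P$ is reducible and write $P=QR$ with $Q,R$ nonempty; then every element of $Q$ is $\leq_P$ every element of $R$, so no such pair is incomparable. Hence $\mathrm{inc}(P)$ has no edges between $Q$ and $R$, and is disconnected.

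For the converse, suppose $\mathrm{inc}(P)$ has connected components $C_1,\ldots,C_k$ with $k\geq 2$. The goal is to linearly order the components so that $P$ becomes the ordinal sum of the restrictions $P|_{C_i}$. The key claim is the following: for any two distinct components $C_i,C_j$, either every element of $C_i$ is $<_P$ every element of $C_j$, or vice versa. To prove it, pick any $u\in C_i$ and $v\in C_j$; since they lie in different components of $\mathrm{inc}(P)$ they are comparable in $P$, say $u<_P v$. For any $\mathrm{inc}(P)$-neighbor $u'$ of $u$ (so $u'$ is incomparable to $u$ in $P$ and $u'\in C_i$), $u'$ is comparable to $v$ in $P$, and $v<_P u'$ would give $u<_P u'$ by transitivity, contradicting incomparability; hence $u'<_P v$. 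Iterating this step along an $\mathrm{inc}(P)$-path inside $C_i$ yields $u''<_P v$ for every $u''\in C_i$, and the symmetric argument along a path inside $C_j$ extends this to $u''<_P v''$ for all $u''\in C_i$ and $v''\in C_j$.

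With the claim in hand, define $C_i\prec C_j$ when every element of $C_i$ is $<_P$ every element of $C_j$. This relation is antisymmetric and transitive from the poset axioms on $P$, and total by the claim, so it linearly orders the components as $C_{i_1}\prec\cdots\prec C_{i_k}$. Setting $P_\ell:=P|_{C_{i_\ell}}$ and comparing with the definition of the ordinal sum gives $P=P_1P_2\cdots P_k$, a nontrivial decomposition since $k\geq 2$, so $P$ is not irreducible.

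The main obstacle is the inductive step inside the claim: combining incomparability along an edge of $\mathrm{inc}(P)$ with transitivity of $<_P$ to propagate the strict relation to $v$ from one endpoint of the path to the next, and then symmetrically along $C_j$. Once this single propagation is in place, the rest is bookkeeping that repackages the components of $\mathrm{inc}(P)$ as the layers of an ordinal decomposition of $P$.
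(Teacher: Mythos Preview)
Your proof is correct and takes a genuinely different route from the paper. The paper argues the hard direction by choosing a particular linear extension $\sigma$ of $P$ in which the up-sets $O_P(\sigma_i)=\{q\in P\mid \sigma_i\leq_P q\}$ have non-increasing size, then shows by a somewhat delicate case analysis (comparing $|O_P(\sigma_{i-1})|$, $|O_P(\sigma_{i-2})|$, etc.) that some index $i$ cuts $\sigma$ into a lower and an upper layer, yielding a two-piece ordinal decomposition. You instead work directly with the connected components of $\mathrm{inc}(P)$ and use a clean propagation-along-paths argument to show that any two components are totally comparable in $P$; this immediately gives a linear order on all components and hence the full ordinal decomposition $P=P_1P_2\cdots P_k$ in one stroke. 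Your approach is more elementary and more transparent, and it produces the complete factorization at once rather than a single split; the paper's approach, while correct, relies on the auxiliary up-set statistic and a less intuitive inductive step.
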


\begin{proof}
    If $P=P_1P_2$, then clearly $\mathrm{inc}(P)=\mathrm{inc}(P_1)\cup\mathrm{inc}(P_2)$ is disconnected. Conversely, let $\mathrm{inc}(P)$ be disconnected, $O_P(p)=\{q\in P\mid p\leq_Pq\}$ and let $\sigma\in\Sigma_P$ be such that $|O_P(\sigma_i)|$ is non-increasing. Note that such $\sigma$ is a linear extension of $P$ since if $p<_Pq$, then $O_P(q)\subsetneq O_P(p)$. 

    We first prove that there exists $i\in [n-1]$ such that for all $j>i$, $\sigma_i\leq_P\sigma_j$. If not, then for all $i\in [n-1]$, there exists $j>i$ such that $\sigma_i\nleq_P\sigma_j$. Since $\sigma$ is a linear extension, $\sigma_j\nleq_P\sigma_i$. Therefore, $\{\sigma_i, \sigma_j\}$ is an edge in $\mathrm{inc}(P)$. If $j\neq n$, in the same way we can find $k>j$ such that $\{\sigma_j, \sigma_k\}$ is an edge in $\mathrm{inc}(P)$ and so on. Thus, for every $i\in [n-1]$ we obtain a path from $\sigma_i$ to $\sigma_n$ in $\mathrm{inc}(P)$, which contradicts the fact that $\mathrm{inc}(P)$ is disconnected. Consequently, there exists $i\in [n-1]$ such that for all $j>i$, $\sigma_i\leq_P\sigma_j$.
    
    Furthermore, if $\sigma_{i-1}\leq_P\sigma_i$, transitivity yields $\sigma_{i-1}\leq_P\sigma_j$ for all $j>i$. If $\sigma_{i-1}\nleq_P\sigma_i$, then still $\sigma_{i-i}\leq_P\sigma_j$ for all $j>i$ since $|O_P(\sigma_{i-1})|\geq|O_P(\sigma_i)|=n-i$. Similarly, if $\sigma_{i-2}\leq_P\sigma_{i-1}$ or $\sigma_{i-2}\leq_P\sigma_i$, transitivity implies $\sigma_{i-2}\leq_P\sigma_j$ for all $j>i$. If $\sigma_{i-2}\nleq_P\sigma_{i-1}$ and $\sigma_{i-2}\nleq_P\sigma_i$, then $\sigma_{i-2}\leq_P\sigma_j$ for all $j>i$ since $|O_P(\sigma_{i-2})|\geq|O_P(\sigma_{i-1})|$. Analogously, we obtain that $\sigma_k\leq_P\sigma_j$ for all $k\leq i$ and for all $j>i$, and therefore $P=P|_{\{\sigma_1, \ldots, \sigma_{i}\}} \cdot P|_{\{\sigma_{i+1}, \ldots, \sigma_n\}}$.
\end{proof}

The authors in \cite[Theorem 5]{CvW} proved that if $(G_n)_{n\in\mathbb{N}}$ is a list of connected graphs such that $G_n$ has $n$ vertices, then $(X_{G_n})_{n\in\mathbb{N}}$ is an algebraic basis of $Sym$. Previous lemma then yields the following.

\begin{corollary}\label{stefani}
    If $(P_n)_{n\in\mathbb{N}}$ is any list of irreducible posets, then $(U_{P_n})_{n\in\mathbb{N}}$ is an algebraic basis of $Sym$.
\end{corollary}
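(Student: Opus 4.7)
The plan is to transport the analogous result for chromatic functions across the involution $\omega$ using Theorem \ref{incomparability} and Lemma \ref{povezan}. Concretely, given a list $(P_n)_{n\in\mathbb{N}}$ of irreducible posets (with $P_n$ having $n$ vertices, as in the cited result from \cite{CvW}), I would first apply Lemma \ref{povezan} to conclude that each incomparability graph $\mathrm{inc}(P_n)$ is connected on $n$ vertices. The theorem of Cho and van Willigenburg then guarantees that $(X_{\mathrm{inc}(P_n)})_{n\in\mathbb{N}}$ is an algebraic basis of $Sym$.

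Next I would invoke Theorem \ref{incomparability} to rewrite $X_{\mathrm{inc}(P_n)} = \omega(U_{P_n})$ for every $n$. The final step is to push this back through $\omega$: since $\omega$ is an algebra automorphism of $Sym$ (and in fact an involution), it sends any algebraic basis to another algebraic basis. Applying $\omega$ to the basis $(X_{\mathrm{inc}(P_n)})_{n\in\mathbb{N}} = (\omega(U_{P_n}))_{n\in\mathbb{N}}$ therefore yields that $(U_{P_n})_{n\in\mathbb{N}}$ is itself an algebraic basis of $Sym$.

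I do not expect any serious obstacle; the only point that deserves a brief remark is that $\omega$ must be used as an algebra automorphism, not merely a linear one, in order to preserve the property of being an \emph{algebraic} (not just linear) basis. This is standard and follows from the fact that $\omega$ is a Hopf algebra automorphism of $QSym$ restricting to $Sym$. Thus the proof is a one-line transport argument combining Lemma \ref{povezan}, Theorem \ref{incomparability}, and the Cho--van Willigenburg theorem.
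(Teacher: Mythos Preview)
Your proposal is correct and follows essentially the same route as the paper: the paper invokes Lemma \ref{povezan} to get connectedness of each $\mathrm{inc}(P_n)$, applies the Cho--van Willigenburg theorem to obtain an algebraic basis $(X_{\mathrm{inc}(P_n)})_{n\in\mathbb{N}}$, and then (implicitly) transfers this through $\omega$ via Theorem \ref{incomparability}. Your write-up actually makes the last step more explicit than the paper does, by noting that $\omega$ must be an algebra automorphism for the transport to preserve algebraic independence.
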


Further, in \cite[Theorem 4.16]{MS} is proved that the list $(U_{X_n})_{n\in\mathbb{N}}$ is an algebraic basis of $Sym$ given that $X_n$ is a digraph with $n$ vertices such that the number of Hamiltonian cycles of $X_n$ and the number of Hamiltonian cycles of $\overline{X_n}$ are different. If $P$ is a poset, then $D_P$ does not contain any non trivial cycle. Therefore, if $(P_n)_{n\in\mathbb{N}}$ is a list of posets such that $P_n$ has $n$ elements and $\overline{D_{P_n}}$ has at least one Hamiltonian cycle for $n\geq 2$, then $(U_{P_n})_{n\in\mathbb{N}}$, and consequently, $(X_{\mathrm{inc}(P_n)})_{n\in\mathbb{N}}$, are two algebraic basis of $Sym$. This result and the one given in Corollary \ref{stefani} are essentially the same.

\begin{theorem} \label{hamired}
    If $P$ is a poset, then $\overline{D_P}$ contains at least one Hamiltonian cycle if and only if $P$ is irreducible.
\end{theorem}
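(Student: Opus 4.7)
The plan is to prove both directions separately, using Lemma \ref{povezan} (which equates irreducibility of $P$ with connectedness of $\mathrm{inc}(P)$) and induction for the harder direction.

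For the forward implication I would argue by contrapositive. Suppose $P = P_1 P_2$ is a nontrivial ordinal sum with $P_1, P_2 \neq \emptyset$. Then every $p_1 \in P_1$ satisfies $p_1 <_P p_2$ for every $p_2 \in P_2$, so no directed edge of $\overline{D_P}$ goes from $P_1$ to $P_2$. A Hamiltonian cycle must visit both nonempty sets, hence must contain at least one consecutive pair in $P_1 \times P_2$, which is impossible.

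For the reverse implication, assume $P$ is irreducible, so $\mathrm{inc}(P)$ is connected by Lemma \ref{povezan}, and induct on $n = |P|$. The base case $n = 2$ is the two-element antichain, where $\overline{D_P}$ contains both directed edges and therefore a Hamiltonian $2$-cycle. For $n \geq 3$, choose a non-cut vertex $v$ of $\mathrm{inc}(P)$ (such a vertex always exists, e.g., as a leaf of any spanning tree of $\mathrm{inc}(P)$). Then $\mathrm{inc}(P \setminus \{v\}) = \mathrm{inc}(P) - v$ is connected, so $P \setminus \{v\}$ is irreducible, and the inductive hypothesis supplies a Hamiltonian cycle $u_1 \to u_2 \to \cdots \to u_{n-1} \to u_1$ in $\overline{D_{P \setminus \{v\}}}$.

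The crux is to splice $v$ into this cycle. Inserting $v$ between $u_i$ and $u_{i+1}$ (indices modulo $n-1$) requires the two new edges $(u_i, v)$ and $(v, u_{i+1})$ to belong to $\overline{D_P}$, which translates to the conditions $u_i \not<_P v$ and $v \not<_P u_{i+1}$. Set $D = \{j : u_j <_P v\}$ and $U = \{j : v <_P u_j\}$; then a position $i$ is forbidden only if $i \in D$ or $i+1 \in U$, so at most $|D| + |U|$ positions are bad. Since $v$ is not isolated in the connected graph $\mathrm{inc}(P)$, there is at least one $u_j$ incomparable to $v$ in $P$, whence $|D| + |U| \leq n - 2 < n - 1$, and a valid insertion slot exists.

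I expect the principal obstacle to be recognizing that a non-cut vertex of $\mathrm{inc}(P)$ is the right object to delete, together with the counting bound $|D| + |U| \leq n - 2$ that guarantees an insertion point. Once these are in place the argument is purely combinatorial and needs no further algebraic input.
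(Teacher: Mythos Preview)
Your argument is correct. The forward direction matches the paper's, and your inductive construction for the reverse direction is sound: deleting a non-cut vertex $v$ of $\mathrm{inc}(P)$ keeps $\mathrm{inc}(P\setminus\{v\})=\mathrm{inc}(P)-v$ connected, so Lemma~\ref{povezan} applies; the insertion count works because $D$ and $U$ are disjoint and miss at least one index, giving $|D|+|U|\le n-2<n-1$ slots.

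Your route, however, is genuinely different from the paper's. The paper argues algebraically: by Lemma~\ref{povezan} the graph $\mathrm{inc}(P)$ is connected, and a result of Cho--van~Willigenburg gives $[p_n]X_{\mathrm{inc}(P)}\neq 0$ for connected graphs; then Theorem~\ref{incomparability} and $\omega(p_n)=\pm p_n$ yield $[p_n]U_P\neq 0$, which by Theorem~\ref{pbaza} is exactly the number of Hamiltonian cycles of $\overline{D_P}$. Your proof is purely combinatorial and self-contained, requiring none of the symmetric-function machinery or the external citation; in exchange, the paper's approach is shorter and illustrates the article's central theme of transporting facts between $X_{\mathrm{inc}(P)}$ and $U_P$ via $\omega$, and it implicitly gives the stronger quantitative statement that the Hamiltonian-cycle count equals $|[p_n]X_{\mathrm{inc}(P)}|$.
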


\begin{proof}
    Clearly, if $P=P_1P_2$, then $\overline{D_P}$ does not contain Hamiltonian cycle. Conversely, if $P$ is irreducible, then $\mathrm{inc}(P)$ is connected by Lemma \ref{povezan}. If $G$ is a connected graph, then $[p_n]X_G\neq 0$ \cite[Lemma 2]{CvW}.
    According to Theorem \ref{pbaza}, $[p_n]U_P$ is the number of Hamiltonian cycles in $\overline{D_P}.$ Since $[p_n]U_P=\pm[p_n]X_{\mathrm{inc}(P)}$, it follows that the number of Hamiltonian cycles in $\overline{D_P}$ is non-zero.
\end{proof}

We can now go one step further and use Stanley's Broken Cycle theorem from \cite{SR}. For a given graph $G=(V, E)$, a \textit{labeling of} $G$ is a bijective map $\alpha: E\rightarrow \{1, \ldots, |E|\}$. A \textit{broken cycle} is a cycle in $G$ whose largest edge with respect to labeling $\alpha$ is removed. The \textit{broken cycle complex} $B_G$ of $G$ is the collection of all subsets of $E$ that do not contain a broken cycle.

\begin{theorem}  \cite[Theorem 2.9]{SR} \label{razbijen}  For any graph $G=(V, E)$,
   \[X_G=\sum_{S\in B_G}(-1)^{|S|}p_{\lambda(S)},\]where $\lambda(S)$ denotes the partition whose entries correspond to the sizes of the connected components of $(V, S)$.
\end{theorem}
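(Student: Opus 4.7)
The plan is to derive the broken-cycle formula in two stages: first establish the unrestricted Whitney-type expansion of $X_G$ over all edge subsets, and then trim away the subsets that contain a broken cycle by a sign-reversing involution that preserves the partition $\lambda(S)$.

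For the first stage, I would apply inclusion-exclusion over the "bad" events, where for each edge $e=\{u,v\}\in E$ the bad event is the set of colorings $f$ with $f(u)=f(v)$. A coloring is proper iff no bad event occurs, so
\[X_G=\sum_{S\subseteq E}(-1)^{|S|}\sum_{f\colon f \text{ constant on each component of }(V,S)}\prod_{v\in V}x_{f(v)}.\]
Grouping the inner sum component by component turns it into $p_{\lambda(S)}$, yielding the Whitney expansion $X_G=\sum_{S\subseteq E}(-1)^{|S|}p_{\lambda(S)}$. This is the standard starting point.

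For the second stage, I would define an involution $\Phi$ on the collection $\{S\subseteq E\mid S\notin B_G\}$ as follows. Given $S\notin B_G$, consider all pairs $(C,e)$ where $C$ is a cycle in $G$, $e$ is the $\alpha$-maximum edge of $C$, and $C\setminus\{e\}\subseteq S$. Among such pairs let $e^\star(S)$ be the smallest such $e$ under the labeling $\alpha$, and set $\Phi(S)=S\,\triangle\,\{e^\star(S)\}$. Since the endpoints of $e^\star(S)$ are already joined by the path $C\setminus\{e^\star(S)\}\subseteq S$, toggling $e^\star(S)$ does not merge or split connected components; hence $\lambda(\Phi(S))=\lambda(S)$, while $|\Phi(S)|=|S|\pm 1$, so the signs $(-1)^{|S|}$ and $(-1)^{|\Phi(S)|}$ cancel. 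The contributions from $S\notin B_G$ therefore pair up to zero, leaving the sum over $B_G$.

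The main obstacle is verifying that $\Phi$ really is an involution, i.e.\ $e^\star(\Phi(S))=e^\star(S)$. The broken cycle witnessing $e^\star(S)$ is still present in $\Phi(S)$, so $e^\star(S)$ remains a candidate for $\Phi(S)$; I must rule out that toggling $e^\star(S)$ creates a smaller candidate. A hypothetical smaller candidate $e'<e^\star(S)$ would require a cycle $C'$ with $\alpha$-maximum edge $e'$ and $C'\setminus\{e'\}\subseteq\Phi(S)=S\cup\{e^\star(S)\}$. If $e^\star(S)\notin C'\setminus\{e'\}$ then already $C'\setminus\{e'\}\subseteq S$, contradicting minimality of $e^\star(S)$ in $S$; otherwise $e^\star(S)\in C'$ with $e^\star(S)>e'=\max_\alpha C'$, a contradiction. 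This confirms $\Phi\circ\Phi=\mathrm{id}$. Everything else is bookkeeping: the $|S|\in\{0,1\}$ and $S\in B_G$ cases together with the involution argument finish the proof.
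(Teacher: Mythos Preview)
The paper does not give its own proof of this statement; it is quoted verbatim as \cite[Theorem~2.9]{SR} and used as a black box. Your two-stage argument (Whitney inclusion--exclusion over all $S\subseteq E$, followed by the sign-reversing involution toggling the $\alpha$-smallest edge $e^\star(S)$ that completes a broken cycle contained in $S$) is correct and is essentially Stanley's original proof in \cite{SR}.
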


The following theorem generalizes Theorem \ref{hamired} and gives another combinatorial connection between $\mathrm{inc}(P)$ and $D_P$.

\begin{theorem}
    For any poset $P$ and for any $\lambda\vdash |P|$, \[\#\{S\in B_{\mathrm{inc}(P)}\mid \lambda(S)=\lambda\}=\#\{\pi\in\mathfrak{S}_V(\overline{D_P})\mid \mathrm{type}(\pi)=\lambda\}.\]
\end{theorem}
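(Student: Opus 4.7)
The plan is to compute the power-sum expansion of the same symmetric function $X_{\mathrm{inc}(P)}$ in two different ways and match coefficients of $p_\lambda$.

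First, I will apply Stanley's broken cycle theorem (Theorem \ref{razbijen}) to $G = \mathrm{inc}(P)$, giving
\[X_{\mathrm{inc}(P)} = \sum_{S \in B_{\mathrm{inc}(P)}} (-1)^{|S|} p_{\lambda(S)}.\]
Because every broken-cycle-free $S$ contains no cycle at all, $(V,S)$ is a forest, hence $|S| = |V| - l(\lambda(S)) = |\lambda(S)| - l(\lambda(S))$, so the sign $(-1)^{|S|}$ depends only on $\lambda(S)$.

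Second, by Theorem \ref{incomparability} combined with Equation \ref{antipod} we have $X_{\mathrm{inc}(P)} = \omega(U_{D_P}) = U_{\overline{D_P}}$, and applying Theorem \ref{pbaza} to $\overline{D_P}$ yields
\[X_{\mathrm{inc}(P)} = U_{\overline{D_P}} = \sum_{\pi \in \mathfrak{S}_V(\overline{D_P}, D_P)} (-1)^{\varphi(\pi)} p_{\mathrm{type}(\pi)},\]
where $\varphi(\pi) = \sum_\gamma (\ell(\gamma) - 1)$ summed over the cycles of $\pi$ that are cycles in $\overline{D_P}$. The main verification at this step is to replace $\mathfrak{S}_V(\overline{D_P}, D_P)$ by $\mathfrak{S}_V(\overline{D_P})$: since $D_P$ comes from a poset it contains no non-trivial cycle, so every non-trivial cycle of $\pi$ must be a cycle of $\overline{D_P}$; and since $D_P$ has no loops, $\overline{D_P}$ contains every loop $(v,v)$, so the trivial cycles of $\pi$ are automatically cycles of $\overline{D_P}$ with no condition. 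Thus $\mathfrak{S}_V(\overline{D_P}, D_P) = \mathfrak{S}_V(\overline{D_P})$, and every cycle of $\pi$ is a cycle of $\overline{D_P}$, so $\varphi(\pi) = \sum_\gamma (\ell(\gamma)-1) = |V| - c(\pi) = |\lambda|-l(\lambda)$ with $\lambda = \mathrm{type}(\pi)$. Again, the sign depends only on $\lambda$.

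Finally, I will group both expansions by $\lambda \vdash |P|$ and invoke the linear independence of the $p_\lambda$'s. Both sides carry the identical sign $(-1)^{|\lambda|-l(\lambda)}$ in front of the coefficient of $p_\lambda$, so after cancellation the counts of $S \in B_{\mathrm{inc}(P)}$ with $\lambda(S) = \lambda$ and of $\pi \in \mathfrak{S}_V(\overline{D_P})$ with $\mathrm{type}(\pi) = \lambda$ must agree. No step should present genuine difficulty; the only delicate point is the identification of the two $\mathfrak{S}_V$ sets together with the observation that the signs in the two power-sum expansions coincide, which is precisely what makes the count (rather than only the signed count) match.
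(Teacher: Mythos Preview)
Your proof is correct and follows essentially the same approach as the paper: compare the $p$-expansions of $X_{\mathrm{inc}(P)}$ coming from Stanley's broken cycle theorem and from the Redei--Berge side via Theorem~\ref{incomparability} and Theorem~\ref{pbaza}, observe that both signs reduce to $(-1)^{|\lambda|-l(\lambda)}$, and cancel. The only cosmetic difference is that the paper applies Theorem~\ref{pbaza} directly to $X=D_P$ (so that $\varphi\equiv 0$ and $[p_\lambda]U_P$ is already the desired count) and then brings in the sign via $\omega(p_\lambda)=(-1)^{n-l(\lambda)}p_\lambda$, whereas you first pass to $U_{\overline{D_P}}$ and let the sign appear through $\varphi$; the two routes are equivalent.
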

\begin{proof}
Note that if $S\in B_G$, $(V, S)$ is a forest. Therefore, if $\lambda(S)=\lambda$, then $|S|=n-l(\lambda)$ and Theorem \ref{razbijen} implies \[[p_\lambda]X_G=(-1)^{n-l(\lambda)}\#\{S\in B_G\mid \lambda(S)=\lambda\}.\] If $P$ is a poset, $D_P$ does not contain cycles, hence Theorem \ref{pbaza} gives \[[p_\lambda] U_P=\#\{\pi\in\textfrak{S}_V(\overline{D_P})\mid \mathrm{type}(\pi)=\lambda\}.\] From $\omega(X_G)=U_P$ and $\omega(p_\lambda)=(-1)^{n-l(\lambda)}p_{\lambda}$, it follows $[p_\lambda]X_{\mathrm{inc}(P)}=(-1)^{n-l(\lambda)}[p_\lambda]U_P$, which completes the proof.

\end{proof}

For example, if $\lambda=(2, 1, \ldots, 1)$, it is not difficult to see that these numbers are actually $\#\{u, v\in P\mid u \text{ and } v \text{ are incomparable}\}$.  On the other hand, for $\lambda=(|P|)$, previous theorem implies that the number of Hamiltonian cycles of $\overline{D_P}$ is the same as the number of spanning subtrees of $\mathrm{inc}(P)$ that do not contain a broken cycle.

\section{Noncommutative analogues}

Although working with noncommuting variables may seem like an additional difficulty, it allows us to keep track of the position of each variable in every monomial. This bigger picture offers the possibility of proving some properties that the commutative versions do not possess. Furthermore, many properties of commutative functions can be easily obtained in this setting.

\subsection{Noncommutative symmetric functions}

We now introduce the space of our particular interest - the vector space of the noncommutative symmetric functions. For basics of symmetric functions in noncommuting variables see \cite{BZ}, \cite{SG}, \cite{RS}. Noncommutative symmetric functions are indexed by the elements of partition lattice. Let $\Pi_n$ denote the collection of all set partitions of $[n]$. This collection is ordered by refinement, which we denote as $\leq$. More precisely, for $\pi, \sigma\in \Pi_n$, we write $\pi\leq\sigma$ if every block of $\pi$ is contained in some block of $\sigma$. With respect to this ordering $\Pi_n$ becomes a lattice. For $\pi, \sigma\in \Pi_n$, denote by $\pi\land\sigma$ their meet (greatest lower bound) and by $\pi\lor\sigma$ their join (least upper bound).
 
  Let $\lambda(\pi)$ denote the integer partition of $n$ whose parts correspond to the block sizes of $\pi$. If $\lambda(\pi)=(1^{r_1}, 2^{r_2}, \ldots, n^{r_n})$, we will write $|\pi|$ for $r_1!r_2!\cdots r_n!$ and $\pi!$ for $1!^{r_1}2!^{r_2}\cdots n!^{r_n}$.

  The \textit{noncommutative monomial symmetric function}, $m_{\pi}$, is defined as \[m_{\pi}=\sum_{i_1, i_2, \ldots, i_n}x_{i_1}x_{i_2}\cdots x_{i_n},\]
where the sum is over all sequences $i_1, i_2, \ldots, i_n$ of positive integers such that $i_j=i_k$ if and only if $j$ and $k$ are in the same block of $\pi$. These functions are linearly independent over $\mathbb{C}$ and we call their span the \textit{algebra of noncommutative symmetric functions} $NCSym$. 

The second basis we will be interested in contains the \textit{noncommutative elementary symmetric functions}, defined by \[e_{\pi}=\sum_{i_{1}, i_{2}, \ldots, i_n}x_{i_1}x_{i_2}\cdots x_{i_n},\]
where the sum runs over all sequences $i_1, i_2, \ldots, i_n$ of positive integers such that $i_j\neq i_k$ if $j$ and $k$ are in the same block of $\pi$.

Another basis of this space consists of the \textit{noncommutative power sum symmetric functions} given by \begin{equation}
    \label{pprekom}p_{\pi}=\sum_{\pi\leq\sigma}m_\sigma=\sum_{i_1, i_2, \ldots, i_n}x_{i_1}x_{i_2}\cdots x_{i_n},\end{equation}
where the second sum is over all positive integer sequences such that $i_j=i_k$ whenever $j$ and $k$ are in the same block of $\pi$.

Our final basis contains the \textit{noncommutative complete homogeneous symmetric functions} which are given by \[h_{\pi}=\sum_{\sigma\in \Pi_n}(\sigma\land\pi)!m_{\sigma}.\]

Letting the variables commute transforms $m_\pi$ to $|\pi|m_{\lambda(\pi)}$, $e_\pi$ into $\pi !e_{\lambda(\pi)}$, $p_{\pi}$ into $p_{\lambda(\pi)}$ and $h_\pi$ to $\pi !h_{\lambda(\pi)}.$ 

These functions are invariant under the standard action of the symmetric group, and hence symmetric in the usual sense. On the other hand, we need to define another action of the symmetric group $S_n$ on this space, which permutes the positions of the variables. For $\delta\in S_n$, let $\delta\circ(x_{i_1}x_{i_2}\cdots x_{i_n})$ be the monomial $x_{i_{\delta^{-1}(1)}}x_{i_{\delta^{-1}(2)}}\cdots x_{i_{\delta^{-1}(n)}}$. This action is extended linearly to the whole space of symmetric functions in noncommuting variables.

In \cite[Theorem 3.4]{RS}, it is shown that \begin{equation} \label{pprekoh}
    p_{\pi}=\frac{1}{|\mu(\widehat{0}, \pi)|}\sum_{\sigma\leq\pi}\mu(\sigma, \pi)h_{\sigma}, \text{ and } h_{\pi}=\sum_{\sigma\leq\pi}|\mu(\widehat{0}, \sigma)|p_\sigma,
\end{equation}
where $\widehat{0}$ denotes the minimal partition of $[n]$ and $\mu$ denotes the Möbius function of $\Pi_n$.

We denote by $\rho : NCSym\rightarrow Sym$ the operator that allows the variables commute. On $NCSym$, the authors in \cite{RS} defined an automorphism $\omega$ by $\omega(e_\pi)=h_\pi$. It turns out that this automorphism is an involution that is compatible with $\omega$ on $Sym$. In other words, $\omega\circ\rho=\rho\circ\omega$, where the first $\omega$ is from $Sym$ and the second one is from $NCSym$. Moreover, they proved that if $\pi$ is a partition of $[n]$, then $\omega(p_\pi)=(-1)^{n-l(\pi)}p_{\pi}$ \cite[Theorem 3.5]{RS}.

Eventually, we define the induction operation on monomials, denoted by $\uparrow$, as \[(x_{i_1}x_{i_2}\cdots x_{i_{n-1}}x_{i_{n}})\uparrow=x_{i_1}x_{i_2}\cdots x_{i_{n-1}}x_{i_{n}}^2\] and extend it linearly to the space of noncommutative symmetric functions.  We can easily see how this operation affects the monomial and the power sum symmetric functions. For $\pi\in\Pi_{n}$, let $\pi+(n+1)\in\Pi_{n+1}$ denote the partition $\pi$ with $n+1$ inserted into the block containing $n$. Then, $m_{\pi}\uparrow=m_{\pi+(n+1)}$ and $p_{\pi}\uparrow=p_{\pi+(n+1)}$. In combination with Equation \ref{pprekoh}, this yields:

 \begin{equation}\label{indukcijah}
       h_{\pi}\uparrow=\sum_{\sigma\leq\pi}\frac{|\mu(\widehat{0}, \sigma)|}{|\mu(\widehat{0}, \sigma+(n+1))|}\sum_{\tau\leq \sigma+(n+1)}\mu(\tau, \sigma+(n+1))h_\tau.
 \end{equation}

 \begin{lemma}\label{indukcijaomega}
     If $f\in NCSym$, then $\omega(f\uparrow)=-\omega(f)\uparrow$.
 \end{lemma}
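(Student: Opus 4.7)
The plan is to prove the identity by checking it on the power sum basis, where both $\omega$ and $\uparrow$ act by simple explicit formulas, and then invoke linearity.

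First I would reduce to the basis case. Since $\omega$ and $\uparrow$ are both linear operators on $NCSym$, and the noncommutative power sum symmetric functions $\{p_\pi\}$ form a basis, it suffices to verify $\omega(p_\pi \uparrow) = -\omega(p_\pi)\uparrow$ for every set partition $\pi \in \Pi_n$.

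Next I would compute both sides explicitly. For $\pi \in \Pi_n$, the text records $p_\pi \uparrow = p_{\pi + (n+1)}$, and the key combinatorial observation is that $\pi + (n+1)$ is obtained by inserting $n+1$ into the block of $\pi$ that contains $n$, so its number of blocks satisfies $l(\pi+(n+1)) = l(\pi)$, while its ground set has size $n+1$. Combined with the formula $\omega(p_\sigma) = (-1)^{|\sigma|-l(\sigma)}p_\sigma$ from \cite[Theorem 3.5]{RS}, I get
\begin{equation*}
\omega(p_\pi \uparrow) = \omega(p_{\pi+(n+1)}) = (-1)^{(n+1)-l(\pi)}\, p_{\pi+(n+1)},
\end{equation*}
and on the other side
\begin{equation*}
-\omega(p_\pi)\uparrow = -(-1)^{n-l(\pi)}\, p_\pi\uparrow = (-1)^{n-l(\pi)+1}\, p_{\pi+(n+1)}.
\end{equation*}
The exponents $(n+1)-l(\pi)$ and $n-l(\pi)+1$ agree, so the two sides are equal.

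There is really no obstacle here; the only thing to note carefully is the block-count invariance $l(\pi+(n+1)) = l(\pi)$, which produces exactly one extra factor of $-1$ (coming from the ground set growing by one while the number of blocks stays the same) — this is the source of the minus sign in the statement. After this observation, the identity holds on the basis, and linearity extends it to all of $NCSym$.
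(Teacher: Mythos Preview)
Your proof is correct and follows essentially the same approach as the paper's own proof: both expand in the power sum basis, use $p_\pi\uparrow = p_{\pi+(n+1)}$ together with $l(\pi+(n+1))=l(\pi)$ and $\omega(p_\pi)=(-1)^{n-l(\pi)}p_\pi$, and compare the resulting signs. The only cosmetic difference is that the paper writes a general $f=\sum c_\pi p_\pi$ and computes both sides directly, whereas you verify the identity on each basis element and appeal to linearity; the content is identical.
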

 \begin{proof}
     Let $f=\sum c_\pi p_\pi$. Since $l(\pi+(n+1))=l(\pi)$, \[\omega(f\uparrow)=\omega(\sum c_\pi p_{\pi+(n+1)})=\sum c_{\pi}(-1)^{n+1-l(\pi)}p_{\pi+(n+1)}.\] On the other hand, \[\omega(f)\uparrow=\omega(\sum c_\pi p_\pi)\uparrow=\sum c_\pi (-1)^{n-l(\pi)}p_{\pi+(n+1)},\] which completes the proof.
 \end{proof}

 \subsection{The chromatic function and the Redei-Berge function in noncommuting variables}

 Gebhard and Sagan introduced the noncommutative version $Y_G$ of the chromatic function of a graph in \cite{SG}.

 Let $G$ be a graph with vertices labeled $v_1, v_2, \ldots, v_n$ in a fixed order. The \textit{chromatic symmetric function} of $G$ in noncommuting variables is
    \[Y_G=\sum_k x_{k(v_1)}x_{k(v_2)}\cdots x_{k(v_n)},\]
    where the sum is over all proper colorings $k$ of $G$, but the $x_i$'s are noncommuting variables. Allowing the variables commute transforms $Y_G$ to $X_G$.

\begin{example} \label{klika}
    Let $K_n$ denote the clique on $n$ vertices. Then, $Y_{K_n}=e_{[n]}$.
\end{example}

Unlike the regular chromatic symmetric function, this one satisfies some form of deletion-contraction, which is a great computational benefit. If $e$ is an edge of $G=(V, E)$, we write $G\setminus e$ for $(V, E\setminus \{e\})$ and $G/e$ for a graph obtained from $G$ by contracting $e$ and identifying endpoints of $e$.

In order to express the deletion-contraction property of $Y_G$, we need a distinguished edge. If the vertices of $G$ are labeled $v_1, v_2, \ldots, v_n$, we would like this edge to be exactly $\{v_{n-1}, v_n\}$. To obtain one such labeling, we define an action of the symmetric group $S_n$ on the set of vertices $v_1, v_2, \ldots, v_n$ with $\delta(v_i)=v_{\delta(i)}$ for $\delta\in S_n$. Since for any graph $G$, $\delta\circ Y_G=Y_{\delta(G)}$ \cite[Proposition 3.3]{SG}, this is not an obstacle.

\begin{theorem}\label{dcy}
    \cite[Proposition 3.5]{SG} (Deletion-contraction for $Y_G$) Let $G=(V, E)$ be any graph with vertices labeled $v_1, v_2, \ldots, v_n$ and let $e=\{v_{n-1}, v_n\}$ be an edge in $G$. Then, \[Y_G=Y_{G\setminus e}-Y_{G/e}\uparrow,\] where the vertex obtained by contraction in $G/e$ is labeled $v_{n-1}$.
\end{theorem}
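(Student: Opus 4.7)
The plan is to verify the identity combinatorially, directly from the definition of $Y_G$ as a sum over proper colorings. The key observation is that the colorings relevant to $Y_{G\setminus e}$ split naturally into two classes according to the values at the distinguished edge $e=\{v_{n-1},v_n\}$.

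First I would expand
\[
Y_{G\setminus e}=\sum_{k}x_{k(v_1)}x_{k(v_2)}\cdots x_{k(v_n)},
\]
where the sum runs over proper colorings of $G\setminus e$, and partition the index set into $\mathcal{A}=\{k:k(v_{n-1})\neq k(v_n)\}$ and $\mathcal{B}=\{k:k(v_{n-1})=k(v_n)\}$. A coloring lies in $\mathcal{A}$ precisely when it is proper on all of $G$ (including the restored edge $e$), so
\[
\sum_{k\in\mathcal{A}}x_{k(v_1)}\cdots x_{k(v_n)}=Y_G.
\]

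Next I would set up the bijection between $\mathcal{B}$ and the set of proper colorings of $G/e$. Each $k\in\mathcal{B}$ assigns a common color $c=k(v_{n-1})=k(v_n)$ to the two endpoints and is proper on every other edge, hence descends to a proper coloring $k'$ of $G/e$ in which the identified vertex (which by convention is labeled $v_{n-1}$ in $G/e$) receives color $c$. Conversely, any proper coloring of $G/e$ lifts uniquely to a coloring in $\mathcal{B}$. Under this correspondence the associated monomials satisfy
\[
x_{k(v_1)}\cdots x_{k(v_{n-2})}x_{k(v_{n-1})}x_{k(v_n)}=x_{k'(v_1)}\cdots x_{k'(v_{n-2})}x_{k'(v_{n-1})}^{2},
\]
so the sum over $\mathcal{B}$ equals $Y_{G/e}\uparrow$ by the very definition of the induction operator. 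Combining the two classes gives $Y_{G\setminus e}=Y_G+Y_{G/e}\uparrow$, and rearranging yields the claim.

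There is no real obstacle here; the argument is a direct bookkeeping exercise once the two classes of colorings are identified. The only point requiring a bit of care is the labeling convention: the hypothesis $e=\{v_{n-1},v_n\}$ together with the stipulation that the contracted vertex in $G/e$ inherits the label $v_{n-1}$ is exactly what makes the $\uparrow$ operator (which doubles the last variable of each monomial) implement the lift from $G/e$-colorings back to $\mathcal{B}$. Without this alignment, one would have to first apply an appropriate element of $S_n$ via the action from \cite[Proposition 3.3]{SG} to move the contracted vertex into the last position, which is essentially why that action was recorded just before the theorem statement.
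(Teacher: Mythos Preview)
Your argument is correct and is exactly the standard proof: partition the proper colorings of $G\setminus e$ by whether $v_{n-1}$ and $v_n$ receive distinct colors, identify the two pieces with $Y_G$ and $Y_{G/e}\uparrow$, and rearrange. The paper itself does not prove this statement---it simply cites \cite[Proposition 3.5]{SG}---and your write-up reproduces the Gebhard--Sagan argument faithfully, including the remark about why the labeling convention is needed for $\uparrow$ to do the right thing.
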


Let $X=(V,E)$ be a digraph. For a coloring of vertices with positive integers $f:V\rightarrow\mathbb{N}$, a $V$-listing $\sigma=(\sigma_1,\ldots,\sigma_n)\in\Sigma_V$ is called $(f,X)$-{\it friendly} if \[f(\sigma_1)\leq f(\sigma_2)\leq\cdots\leq f(\sigma_n) \ \text{and}\]  \[f(\sigma_j)<f(\sigma_{j+1}) \ \mathrm{for \ each} \ j\in[n-1] \ \mathrm{satisfying} \ (\sigma_j,\sigma_{j+1})\in E.\] Denote by $\Sigma_V(f,X)$ the set of all $(f,X)$-friendly $V$-listings and by $\delta_f:\Sigma_V\rightarrow\{0,1\}$ its indicator function.

 The author in \cite{M} defined the noncommutative version $W_X$ of the Redei-Berge function of a digraph $X$ and named it the \textit{Redei-Berge function in noncommuting variables.} For a digraph $X=(V, E)$ with vertices labeled $v_1, v_2, \ldots, v_n$ in fixed order, its Redei-Berge function in noncommuting variables, denoted as $W_X$, is \[W_X=\sum_{f:V\rightarrow\mathbb{N}}\sum_{\sigma\in\Sigma_V}\delta_f(\sigma)x_{f(v_1)}x_{f(v_2)}\cdots x_{f(v_n)}.\] Letting the variables commute transforms $W_X$ to $U_X$ \cite{M}. As in the commutative case, for a poset $P$ we define $W_P$ as $W_{D_P}$.

 \begin{example}\label{diskretan}
    Let $T_n=(V, \emptyset)$ be the discrete digraph on $n$ vertices. For a function $f: V\rightarrow\mathbb{N}$ such that $f[V]$ contains $k$ values $c_1<c_2< \cdots< c_k$, there are $|f^{-1}[\{c_1\}]|!|f^{-1}[\{c_2\}]|!\cdots |f^{-1}[\{c_k\}]|!$ listings that are friendly with $f$. Therefore, \[W_{T_n}=\sum_{\pi\in\Pi_n} \pi! m_{\pi}=h_{[n]}.\]  
\end{example} 

 This function satisfies some form of the deletion-contraction property, as opposed to the commutative Redei-Berge function.  The \textit{deletion of an edge} $e\in E$ from a digraph $X=(V,E)$ is the digraph $X\setminus e=(V,E\setminus\{e\})$. The \textit{contraction} of a digraph $X=(V, E)$ by $e=(u,v)\in E$ is the digraph $X/e=(V', E')$, where $V'=V\setminus\{u,v\}\cup\{e\}$ and $E'$ contains all edges in $E$ with vertices different from $u,v\in V$ and additionally for $w\neq u,v$ we have
\begin{itemize}
\item $(w,e)\in E'$ if and only if $(w,u)\in E$ and
\item $(e,w)\in E'$ if and only if $(v,w)\in E$.
\end{itemize}

In analogy with the case of $Y_G$, we would like the distinguished edge $e$ of digraph whose vertices are labeled $v_1, \ldots, v_n$ to be exactly $(v_{n-1}, v_n)$. This is always possible since  for any digraph $X$, $\delta\circ W_X=W_{\delta(X)}$ \cite[Proposition 3.6]{M}.

\begin{theorem}\label{dcw} \cite[Theorem 3.7]{M}(Deletion-contraction for $W_X$) Let $X=(V, E)$ be any digraph with vertices labeled $v_1, v_2, \ldots, v_n$ and let $e=(v_{n-1}, v_n)$ be an edge in $X$. Then, \begin{equation}\label{delcon}
    W_X=W_{X\setminus e}-W_{X/e}\uparrow,\end{equation}
where the vertex obtained by contraction in $X/e$ is labeled $v_{n-1}$.
\end{theorem}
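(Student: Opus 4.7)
The plan is to prove the identity by reorganizing the defining sum of $W_{X\setminus e}-W_X$ and matching it termwise with $W_{X/e}\uparrow$ via an explicit bijection. First I would note that since $E(X\setminus e)=E(X)\setminus\{e\}$, the strict-inequality requirements for $(f,X\setminus e)$-friendliness are a proper subset of those for $(f,X)$-friendliness, so every $(f,X)$-friendly listing is automatically $(f,X\setminus e)$-friendly. Hence
\[
W_{X\setminus e}-W_X=\sum_{f:V\to\mathbb{N}}\sum_{\sigma\in\mathcal{A}_f}x_{f(v_1)}x_{f(v_2)}\cdots x_{f(v_n)},
\]
where $\mathcal{A}_f$ consists of listings that are $(f,X\setminus e)$-friendly but not $(f,X)$-friendly.

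Next I would characterize $\mathcal{A}_f$. A listing in $\mathcal{A}_f$ must fail a strict inequality demanded by some edge of $X$ that does not lie in $X\setminus e$; the only such edge is $e=(v_{n-1},v_n)$. Consequently $\sigma\in\mathcal{A}_f$ exactly when $v_{n-1}$ sits immediately before $v_n$ in $\sigma$, the equality $f(v_{n-1})=f(v_n)$ holds, and all remaining friendliness conditions for $X\setminus e$ are satisfied. (There is a unique such index $j$ because $v_{n-1}$ appears only once in $\sigma$.)

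Then I would construct the bijection. Given $(f,\sigma)\in\mathcal{A}_f$, delete the entry $v_n$ from $\sigma$ to obtain a listing $\sigma'$ of $V(X/e)$ in which the position $j$ is now occupied by the contracted vertex (relabeled $v_{n-1}$), and define $f':V(X/e)\to\mathbb{N}$ by $f'(v_i)=f(v_i)$ for $i\leq n-2$ and $f'(v_{n-1})=f(v_{n-1})=f(v_n)$. The inverse map inserts $v_n$ immediately after $v_{n-1}$ in $\sigma'$ and sets $f(v_n)=f'(v_{n-1})$. The monomial attached to $(f',\sigma')$ in $W_{X/e}$ is $x_{f'(v_1)}\cdots x_{f'(v_{n-1})}$, and applying $\uparrow$ duplicates the last variable, yielding precisely $x_{f(v_1)}\cdots x_{f(v_{n-1})}x_{f(v_{n-1})}=x_{f(v_1)}\cdots x_{f(v_n)}$ since $f(v_n)=f(v_{n-1})$.

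The main obstacle is the case analysis verifying that $(f,X\setminus e)$-friendliness of $\sigma$ translates to $(f',X/e)$-friendliness of $\sigma'$. Weak monotonicity of the $f$-values transports directly because the deleted entry $v_n$ has the same value as its predecessor $v_{n-1}$. For the strict inequalities, at positions not adjacent to $j$ the edges of $X$ and of $X/e$ agree by definition of contraction; at position $j-1$ of $\sigma'$ (an edge into the contracted vertex) one uses the rule that $(w,v_{n-1})\in E(X/e)$ iff $(w,v_{n-1})\in E(X)$; at position $j$ of $\sigma'$ (an edge out of the contracted vertex) one uses $(v_{n-1},w)\in E(X/e)$ iff $(v_n,w)\in E(X)$. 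In each case the strict inequality required on $\sigma'$ corresponds to exactly the same strict inequality required on $\sigma$, so the two sets of friendly pairs are in bijection and the identity $W_X=W_{X\setminus e}-W_{X/e}\uparrow$ follows.
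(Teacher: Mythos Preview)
The paper does not prove this theorem; it is quoted from \cite{M} without argument. Your proposal is therefore not comparable to anything in the present paper, but it is a correct and natural bijective proof of the deletion--contraction relation.

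Your argument is sound: the inclusion $\Sigma_V(f,X)\subseteq\Sigma_V(f,X\setminus e)$ is immediate, the characterization of $\mathcal{A}_f$ as those listings with $v_{n-1}$ immediately preceding $v_n$ and $f(v_{n-1})=f(v_n)$ is exactly right, and the bijection that deletes (respectively reinserts) $v_n$ is well defined because the contraction rules $(w,v_{n-1})\in E(X/e)\Leftrightarrow(w,v_{n-1})\in E(X)$ and $(v_{n-1},w)\in E(X/e)\Leftrightarrow(v_n,w)\in E(X)$ match precisely the constraints at positions $j-1$ and $j$ of $\sigma'$. The monomial bookkeeping is also correct, since the $\uparrow$ operator squares the last variable $x_{f'(v_{n-1})}$ and $f(v_n)=f(v_{n-1})$. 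One small point worth making explicit in a write-up: colorings $f$ with $f(v_{n-1})\neq f(v_n)$ have $\mathcal{A}_f=\emptyset$, so the correspondence $f\leftrightarrow f'$ between colorings of $V$ with $f(v_{n-1})=f(v_n)$ and colorings of $V(X/e)$ is indeed a bijection at the level of monomials; you use this implicitly when matching terms.
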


If $P$ is a poset and $e=(p, q)$ a covering in $P$, deleting $e$ from $P$ produces a poset which we denote by $P\setminus e$. In that case, $D_{P\setminus e}=D_P\setminus e$.  However, if $e$ is not a covering, $P\setminus e$ lacks transitivity. Similarly, if we contract an edge that is not a covering, we lose antisymmetry. In what follows, if $e$ is a covering in $P$, we denote by  $P/e$ a poset whose assigned digraph is $D_P/e$. This allows us to rewrite the result given in Theorem \ref{dcw} as \[ W_P=W_{P\setminus e}-W_{P/e}\uparrow.\]  In order to prove the generalization of Theorem \ref{incomparability}, we need some simple results concerning the incomparability graph. If $e=\{u, v\}$ for $u, v\in V$ is not an edge of $G=(V, E)$, $G\cup e$ denotes $(V, E\cup \{e\})$. By abuse of notation, $(u, v)$ in $P$, $\{u, v\}$ in $\mathrm{inc}(P)$ and the newly acquired vertices obtained by contracting these edges will all be denoted as $e$ in the following lemma.

\begin{lemma}

    If $P$ is a poset and $e=(u, v)$ a covering in $P$, then
    
    a) $\mathrm{inc}(P\setminus e)=\mathrm{inc}(P)\cup e$.
    
    b) $\mathrm{inc}(P/e)=(\mathrm{inc}(P)\cup e)/e$.
\end{lemma}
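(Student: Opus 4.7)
Both parts follow by carefully unfolding the definitions, with the covering hypothesis on $e$ playing the decisive role throughout; recall that $(u,v)$ being a covering is equivalent to the non-existence of any $w\in P$ with $u<_P w<_P v$.

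For part (a), I would first confirm that $P\setminus e$ is indeed a poset. Antisymmetry transfers from $P$ trivially, while transitivity could only fail if a chain $u<_P w<_P v$ existed in $P$ (because if $(a,b)$ and $(b,c)$ are both edges of $D_P\setminus e$, transitivity of $P$ gives $(a,c)\in D_P$, and $(a,c)=(u,v)$ would force $a=u$, $c=v$ with some intermediate $b$); the covering hypothesis excludes exactly this. Consequently, the only pair whose comparability status changes between $P$ and $P\setminus e$ is $\{u,v\}$, moving from comparable to incomparable, so $\mathrm{inc}(P\setminus e)$ is obtained from $\mathrm{inc}(P)$ by adding the edge $e=\{u,v\}$.

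For part (b), both graphs clearly have vertex set $(V(P)\setminus\{u,v\})\cup\{e\}$, so the plan is to match edges in two cases. For an edge $\{w_1,w_2\}$ with $w_1,w_2\neq e$, the definition of $D_P/e$ leaves the $P$-relations among elements distinct from $u,v$ intact, so $\{w_1,w_2\}\in\mathrm{inc}(P/e)$ iff $\{w_1,w_2\}\in\mathrm{inc}(P)$; contracting a graph edge likewise does not create or destroy edges disjoint from it, so the same equivalence holds for $(\mathrm{inc}(P)\cup e)/e$.

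The interesting case is $\{e,w\}$ with $w\neq u,v$. From the definition of $D_P/e$ one reads off that $e$ and $w$ are incomparable in $P/e$ iff $w\not<_P u$ and $v\not<_P w$, while $\{e,w\}$ is an edge of $(\mathrm{inc}(P)\cup e)/e$ iff $\{u,w\}$ or $\{v,w\}$ is an edge of $\mathrm{inc}(P)\cup e$, i.e. iff $w$ is $P$-incomparable to $u$ or to $v$. I would then verify the equivalence of these two conditions directly: if $w$ were $P$-comparable to both $u$ and $v$ while satisfying $w\not<_P u$ and $v\not<_P w$, then $u<_P w$ and $w<_P v$, contradicting the covering; conversely, $w$ incomparable to $u$ in $P$ gives $w\not<_P u$ for free and forces $v\not<_P w$ (else $u<_P v<_P w$ yields $u<_P w$ by transitivity), and the case $w$ incomparable to $v$ is symmetric. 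The only real content of the proof lies in this last equivalence, and its verification rests entirely on the covering hypothesis ruling out intermediate elements between $u$ and $v$; the rest is clean bookkeeping with no induction or further machinery needed.
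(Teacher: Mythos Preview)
Your proposal is correct and follows essentially the same route as the paper. The paper dismisses part (a) as obvious and for part (b) asserts directly that $w$ is incomparable with $e$ in $P/e$ iff it is incomparable with at least one of $u,v$ in $P$; you supply the detailed verification of this equivalence (including the case split and the explicit use of the covering hypothesis to exclude $u<_P w<_P v$), as well as the bookkeeping for edges not involving $e$, so your argument is simply a more explicit rendering of the paper's.
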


\begin{proof}
   Part a) is obvious. For part b), note that in $P/e$ it holds $w\leq_{P/e}e$ if and only if $w\leq_{P} u\leq_P v$, and $e\leq_{P/e}w$ if and only if $u\leq_P v\leq_P w$. In other words, $w$ is in $P/e$ incomparable with $e$ if and only if it is incomparable with at least one of $u, v$ in $P$. Therefore, $\{w, e\}$ is an edge in $\mathrm{inc}(P/e)$ if and only if $\{w, u\}$, or $\{w, v\}$ is an edge in $\mathrm{inc}(P)\cup e$. The definition of the contraction of an edge in a graph completes the proof.
\end{proof}

We are now ready to obtain a generalization of Theorem \ref{incomparability} that gives a connection between the noncommuting versions of the chromatic function and the Redei-Berge function. 

\begin{theorem} \label{nekomutirajuce}
    If $P$ is a poset with vertices labeled $v_1, \ldots, v_n$, then \[Y_{\mathrm{inc}(P)}=\omega(W_P).\]
\end{theorem}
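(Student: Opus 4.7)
The plan is to proceed by induction on the number of coverings of $P$, using deletion-contraction for both $W_X$ and $Y_G$. For the base case, suppose $P$ has no coverings, so $P$ is an antichain on $n$ vertices. Then $D_P$ is the discrete digraph $T_n$ and $\mathrm{inc}(P)$ is the complete graph $K_n$. Example \ref{klika} gives $Y_{K_n}=e_{[n]}$ and Example \ref{diskretan} gives $W_{T_n}=h_{[n]}$, so since $\omega$ is an involution on $NCSym$ with $\omega(e_{[n]})=h_{[n]}$, we conclude $\omega(W_P)=\omega(h_{[n]})=e_{[n]}=Y_{\mathrm{inc}(P)}$.

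For the inductive step, we choose any covering $e$ of $P$. Using that $\delta\circ W_X=W_{\delta(X)}$ and $\delta\circ Y_G=Y_{\delta(G)}$ for $\delta\in S_n$, we may assume $e=(v_{n-1},v_n)$. Applying Theorem \ref{dcw} to $W_P$, then composing with $\omega$ and invoking Lemma \ref{indukcijaomega}, we get
\[\omega(W_P)=\omega(W_{P\setminus e})-\omega(W_{P/e}\uparrow)=\omega(W_{P\setminus e})+\omega(W_{P/e})\uparrow.\]
Since both $P\setminus e$ and $P/e$ have fewer coverings than $P$, the induction hypothesis together with the preceding lemma identifying the incomparability graphs of $P\setminus e$ and $P/e$ rewrites the right-hand side as $Y_{\mathrm{inc}(P)\cup e}+Y_{(\mathrm{inc}(P)\cup e)/e}\uparrow$. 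To match this with $Y_{\mathrm{inc}(P)}$, we apply the deletion-contraction identity of Theorem \ref{dcy} to the graph $\mathrm{inc}(P)\cup e$ — valid because $e$ is genuinely a new edge there, since $v_{n-1}$ and $v_n$ are comparable in $P$ — which gives $Y_{\mathrm{inc}(P)\cup e}=Y_{\mathrm{inc}(P)}-Y_{(\mathrm{inc}(P)\cup e)/e}\uparrow$, or equivalently $Y_{\mathrm{inc}(P)}=Y_{\mathrm{inc}(P)\cup e}+Y_{(\mathrm{inc}(P)\cup e)/e}\uparrow$, exactly as required.

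The main subtlety is the sign bookkeeping. The deletion-contraction for $W$ contributes a minus in front of $W_{P/e}\uparrow$; the anti-commutation of $\omega$ with $\uparrow$ supplied by Lemma \ref{indukcijaomega} flips this to a plus, which is precisely what is produced by solving the deletion-contraction identity of Theorem \ref{dcy} for $Y_{\mathrm{inc}(P)}$ in terms of $Y_{\mathrm{inc}(P)\cup e}$. Identifying this sign reversal as the mechanism that reconciles the two deletion-contraction identities, together with the observation that Theorem \ref{dcy} must be applied to the augmented graph $\mathrm{inc}(P)\cup e$ rather than to $\mathrm{inc}(P)$ itself, is the key conceptual step.
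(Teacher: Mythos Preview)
Your argument tracks the paper's almost exactly, but the induction parameter you chose does not work. You induct on the number of coverings of $P$ and assert that ``both $P\setminus e$ and $P/e$ have fewer coverings than $P$.'' This is false for $P\setminus e$: deleting a covering can create new ones. For instance, take the chain $1<2<3<4$ with coverings $(1,2),(2,3),(3,4)$. Deleting the covering $(2,3)$ yields a poset in which $2$ and $3$ are incomparable but $1<2$, $1<3$, $2<4$, $3<4$ all hold; now $(1,2),(1,3),(2,4),(3,4)$ are all coverings, so the count has gone from $3$ to $4$. Hence the induction does not terminate.

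The fix is simple and is exactly what the paper does: induct on the number of \emph{comparable pairs} of $P$, i.e.\ on $|E(D_P)|$. Then $P\setminus e$ loses precisely the pair $(v_{n-1},v_n)$, and $P/e$ also has strictly fewer edges in its digraph, so both strictly decrease. With that single change your base case and inductive step are correct and coincide with the paper's proof, including the use of Lemma~\ref{indukcijaomega} to flip the sign and the application of Theorem~\ref{dcy} to $\mathrm{inc}(P)\cup e$.
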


\begin{proof}
    We prove this theorem by induction on the number of comparable pairs in $P$. If $P$ is a discrete poset, then $\mathrm{inc}(P)$ is a clique. Examples \ref{klika} and \ref{diskretan} yield $Y_{\mathrm{inc}(P)}=e_{[n]}$ and $W_P=h_{[n]}$. The fact that $e_{[n]}=\omega(h_{[n]})$ proves the base of this induction.

    Further, if $P$ has at least one comparable pair, then it has at least one covering. We relabel $P$ so that $e=(v_{n-1}, v_n)$ is a covering. Then $W_P=W_{P\setminus e}-W_{P/e}\uparrow$. Since $P\setminus e$ and $P/e$ have less comparable pairs than $P$, the induction hypothesis gives $W_{P\setminus e}=\omega(Y_{\mathrm{inc}(P\setminus e)})=\omega(Y_{\mathrm{inc(P)}\cup e})$ and $W_{P/e}=\omega(Y_{\mathrm{inc}(P/e)})=\omega(Y_{(\mathrm{inc}(P)\cup e)/e})$. Therefore, \[W_P=\omega(Y_{\mathrm{inc(P)}\cup e})-\omega(Y_{(\mathrm{inc}(P)\cup e)/e})\uparrow=\omega(Y_{\mathrm{inc(P)}\cup e}+Y_{(\mathrm{inc}(P)\cup e)/e}\uparrow),\] where the last equality is satisfied according to Lemma \ref{indukcijaomega}. However, Theorem \ref{dcy} gives \[Y_{\mathrm{inc(P)}\cup e}+Y_{(\mathrm{inc}(P)\cup e)/e}\uparrow=Y_{(\mathrm{inc}(P)\cup e)/e}=Y_{\mathrm{inc}(P)}.\] Consequently, $W_P=\omega(Y_{\mathrm{inc}(P)})$, which completes the proof.
\end{proof}

Applying the commuting operator $\rho$ to both sides of $Y_{\mathrm{inc}(P)}=\omega(W_P)$ yields another proof of Theorem \ref{incomparability}. In other words, there is a commutative diagram: 
\begin{center}
    \begin{tikzcd}
Y_{\mathrm{inc}(P)} \arrow[rr, "\omega", maps to] \arrow[dd, "\rho", maps to] &  & W_P \arrow[dd, "\rho", maps to] \\
                                                            &  &                               \\
X_{\mathrm{inc}(P)} \arrow[rr, "\omega", maps to]                             &  & U_P                            
\end{tikzcd}
\end{center}

\begin{remark}
     The connection between the noncommuting versions of the Redei-Berge and the chromatic function described previously, does not necessarily imply that the deletion-contraction property from Theorem \ref{dcy} and the one from Theorem \ref{dcw} are related. As we have already noted, if $P$ is a poset, the only edges of $D_P$ whose deletion and contraction allow us to remain in the category of posets are the edges that represent a covering. In other words, deletion-contraction of $W_{D_P}$ and deletion-contraction of $\mathrm{inc}(P)$ are two sides of the same coin only when the chosen edge of $D_P$ is a covering. 
     \end{remark}

  \section{Positivity questions}

One of the goals of algebraic combinatorics is finding combinatorial interpretation of coefficients of some symmetric function when expanded in one of the natural bases of $Sym$. If $u=\{u_i\}$ is a basis of some vector space $V$ and $v\in V$, we say that $v$ is $u-$\textit{positive} if every $[u_i]v$ is nonnegative.

  \subsection{Stanley-Stembridge and unit interval orders}

 Stanley proposed a hypothesis that has been one of the central questions of algebraic combinatorics for the last thirty years.  We say that a poset $P$ is $(a+b)-$\textit{free} if $P$ does not contain an induced subposet isomorphic to a disjoint union of an $a-$element chain and a $b-$element chain. 

\begin{conjecture} \cite[Conjecture 5.1]{SR} \label{hipoteza}
    (Stanley-Stembridge) If $P$ is $(3+1)-$free, then $X_{\mathrm{inc}(P)}$ is $e-$positive.
\end{conjecture}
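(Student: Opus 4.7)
The statement is the Stanley-Stembridge conjecture, which has resisted direct attack for three decades, so my proposal is not to claim a proof but to sketch the reformulation that the paper appears to be setting up. Since $\omega$ is an involution on $Sym$ with $\omega(e_\lambda)=h_\lambda$, Theorem \ref{incomparability} gives
\[
X_{\mathrm{inc}(P)}=\sum_\lambda c_\lambda e_\lambda\iff U_P=\sum_\lambda c_\lambda h_\lambda,
\]
so $e$-positivity of $X_{\mathrm{inc}(P)}$ is equivalent to $h$-positivity of $U_P$. My first step, therefore, is to restate the Stanley--Stembridge conjecture in the equivalent (but possibly more tractable) form: if $P$ is $(3+1)$-free, then $U_P$ is $h$-positive. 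This is appealing because the $h$-basis and the Redei-Berge function both have a cleaner combinatorial description in terms of listings/orderings, which may interact better with the poset-theoretic hypothesis.

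My next step is to try to exploit the noncommutative lift $W_P$, which satisfies the deletion-contraction identity $W_P=W_{P\setminus e}-W_{P/e}\!\uparrow$ for any covering edge $e=(u,v)$. Because $\omega(h_\pi)=e_\pi$ up to sign data controlled by Lemma \ref{indukcijaomega}, proving $h$-positivity of $U_P$ would follow from an appropriate refined statement about $W_P$. The base case is clean: for a discrete poset, Example \ref{diskretan} gives $W_P=h_{[n]}$, which is manifestly $h$-positive. For the inductive step, one would want to add coverings one at a time, respecting the $(3+1)$-freeness at every stage, and control the sign appearing in the induction operator $\uparrow$. A second potential input is Guay-Paquet's theorem, which expresses the chromatic function of any $(3+1)$-free poset as a convex combination of chromatic functions of $(3+1)$- and $(2+2)$-free posets (unit interval orders); translating this decomposition through $\omega$ would reduce the problem to $h$-positivity of $U_P$ for unit interval orders, for which the combinatorics of $\Sigma_P$ and descent sets is particularly constrained.

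The hard part, and the reason this is a conjecture rather than a theorem, is of course the signs. The recursion in Theorem \ref{dcw} is a \emph{difference} of positive objects, so neither the noncommutative deletion-contraction nor Lemma \ref{indukcijaomega} yields $h$-positivity by naive induction. Any genuine proof via this route would require either a sign-reversing involution on the terms produced by iterating (\ref{delcon}), or a clever grouping of coverings (perhaps using modular elements or the structure of the incomparability graph) under which cancellations happen on the nose. A secondary obstacle is that the $(3+1)$-free hypothesis is not preserved under deletion of a covering, so any inductive scheme will need a stronger invariant to carry through the recursion; one natural candidate is to induct on unit interval orders and use the cover relation structure, or to build $P$ as an ordinal sum of irreducible pieces and combine the $h$-positivity multiplicatively using the Hopf-algebraic structure alluded to in the remark after Theorem \ref{incomparability}. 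In short, the plan is \emph{reformulation plus inductive attack via $W_P$}, with the understanding that resolving the sign cancellation is the essential difficulty and is exactly what makes Stanley-Stembridge hard.
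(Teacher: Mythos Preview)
Your proposal is correct in spirit and matches the paper's treatment: the paper does not prove Conjecture~\ref{hipoteza} either, but immediately reformulates it via Theorem~\ref{incomparability} and $\omega(e_\lambda)=h_\lambda$ into the equivalent statement that $U_P$ is $h$-positive for $(3+1)$-free $P$, then invokes Guay-Paquet's reduction to unit interval orders and explores the noncommutative deletion-contraction for $W_X$ as a potential tool---exactly the reformulation-plus-inductive-attack outline you sketch, with the same honest acknowledgment that the sign cancellation is the essential obstruction.
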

In order to prove this conjecture, mathematicians have developed a range of combinatorial artillery and new techniques. Since recently, there is a probabilistic proof of it in \cite{H}. In the sequel, we give another possible approach to resolving this conjecture.

\begin{theorem}
    If $P$ is a poset, then $[e_{\lambda}] X_{\mathrm{inc}(P)}=[h_{\lambda}]U_P$. Consequently, $X_{\mathrm{inc}(P)}$ is $e-$positive if and only if $U_P$ is $h-$positive.
\end{theorem}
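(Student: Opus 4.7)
The plan is to deduce the coefficient equality directly from Theorem \ref{incomparability} together with the standard action of the involution $\omega$ on the elementary and complete homogeneous bases. The key observation is that $\omega$ swaps the two bases: by definition $h_\lambda = \omega(e_\lambda)$, and since $\omega$ is an involution on $Sym$ we also have $e_\lambda = \omega(h_\lambda)$. Thus $\omega$ induces a bijection between the $e$-expansion of any symmetric function and the $h$-expansion of its image.

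Concretely, I would proceed as follows. First, invoke Theorem \ref{incomparability} to write $X_{\mathrm{inc}(P)} = \omega(U_P)$, or equivalently $U_P = \omega(X_{\mathrm{inc}(P)})$. Next, write the $h$-expansion $U_P = \sum_{\lambda \vdash n} c_\lambda h_\lambda$ with $c_\lambda = [h_\lambda] U_P$. Applying $\omega$ term by term and using $\omega(h_\lambda) = e_\lambda$ gives
\[
X_{\mathrm{inc}(P)} = \omega(U_P) = \sum_{\lambda \vdash n} c_\lambda \, \omega(h_\lambda) = \sum_{\lambda \vdash n} c_\lambda e_\lambda.
\]
Since $\{e_\lambda\}$ is a basis of $Sym$, matching coefficients yields $[e_\lambda] X_{\mathrm{inc}(P)} = c_\lambda = [h_\lambda] U_P$, which is the claimed equality.

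The consequence about positivity is then immediate: $X_{\mathrm{inc}(P)}$ being $e$-positive means every $[e_\lambda] X_{\mathrm{inc}(P)} \geq 0$, which by the equality above is equivalent to every $[h_\lambda] U_P \geq 0$, i.e.\ to $U_P$ being $h$-positive.

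There is essentially no obstacle here; the argument is a one-line consequence of the two ingredients (Theorem \ref{incomparability} and the action of $\omega$ on the $e$- and $h$-bases). The only thing to be careful about is that we actually use \emph{both} $\omega(e_\lambda) = h_\lambda$ and $\omega(h_\lambda) = e_\lambda$, which requires the involutivity of $\omega$ (recalled in the preliminaries). The conceptual content lies entirely in Theorem \ref{incomparability}; this proposition simply repackages that identity in a form convenient for comparing the Stanley--Stembridge conjecture with a question of $h$-positivity for $U_P$.
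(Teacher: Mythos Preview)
Your proof is correct and takes essentially the same approach as the paper: both invoke Theorem \ref{incomparability} and the fact that $\omega$ interchanges the $e$- and $h$-bases. The only cosmetic difference is that the paper starts from the $e$-expansion of $X_{\mathrm{inc}(P)}$ and applies $\omega$, whereas you start from the $h$-expansion of $U_P$; the content is identical.
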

\begin{proof}
    If $X_{\mathrm{inc}(P)}=\sum c_\lambda e_\lambda$, Theorem \ref{incomparability} yields $U_P=\omega(X_{\mathrm{inc}(P)})=\omega(\sum c_\lambda e_\lambda)=\sum c_\lambda h_{\lambda}.$
\end{proof}

Therefore, Conjecture \ref{hipoteza} is equivalent to the following.

\begin{conjecture}
    If $P$ is $(3+1)-$free, then $U_P$ is $h$-positive.
\end{conjecture}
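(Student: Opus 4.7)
The plan is to use the noncommutative deletion-contraction formula $W_P = W_{P\setminus e} - W_{P/e}\uparrow$ from Theorem \ref{dcw} as the main inductive engine, which is the fresh tool the present paper contributes to the Stanley-Stembridge circle of ideas. Since $\rho$ sends $h_\pi$ to a positive multiple of $h_{\lambda(\pi)}$, any $h_\pi$-positive expansion of $W_P$ descends to an $h$-positive expansion of $U_P$; it therefore suffices to establish $h_\pi$-positivity of $W_P$ in $NCSym$ for all $(3{+}1)$-free posets $P$, which by Theorem \ref{nekomutirajuce} is equivalent to $e_\pi$-positivity of $Y_{\mathrm{inc}(P)}$ and thus a genuine refinement of Stanley-Stembridge.

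First I would reduce to unit interval orders via Guay-Paquet's theorem, which says the Stanley-Stembridge conjecture on $(3{+}1)$-free posets follows from its special case for posets that are simultaneously $(3{+}1)$- and $(2{+}2)$-free. Such posets admit a canonical encoding by area sequences of Dyck paths, supplying a clean numerical parameter on which to induct. For such a $P$, fix a linear extension $v_1,\ldots,v_n$ placing a covering $e=(v_{n-1},v_n)$ at the top, apply Theorem \ref{dcw}, and feed in an inductive hypothesis strong enough to survive both deletion and contraction of a top covering.

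The main obstacle is two-fold. First, the minus sign in $W_P = W_{P\setminus e} - W_{P/e}\uparrow$ does not manifestly preserve $h_\pi$-positivity, and by Equation \ref{indukcijah} the operator $\uparrow$ itself already spreads $h_\pi$ into a signed combination of $h_\tau$'s via Möbius values; a naive induction only gives a signed expansion. Second, contracting a covering in a unit interval order need not stay within the $(3{+}1)$- and $(2{+}2)$-free class, so one must induct inside a strictly larger family in which the desired positivity still holds.

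The route I would pursue to overcome these obstacles is to construct an explicit combinatorial model for $[h_\pi]W_P$ on unit interval orders in terms of $(f, D_P)$-friendly listings, and then produce a sign-reversing involution on that model realizing the deletion-contraction step: listings contributing to $W_{P/e}\uparrow$ should pair with matching \emph{bad} listings inside $W_{P\setminus e}$, leaving behind a manifestly $h_\pi$-positive sum. Constructing such an involution is the heart of the matter and would convert Theorem \ref{nekomutirajuce} into a full combinatorial proof of Stanley-Stembridge, which is where the genuine difficulty lies; failing that, a backup strategy is to apply deletion-contraction simultaneously to every top covering of $v_n$ and hope the resulting signed sum telescopes into a positive identity, in the spirit of Gebhard-Sagan's $(e)$-positivity arguments for $Y_G$.
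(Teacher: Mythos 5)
The statement you are trying to prove is presented in the paper as a \emph{conjecture}: the paper's entire contribution here is the one-line observation that $\omega(X_{\mathrm{inc}(P)})=U_P$ forces $[e_\lambda]X_{\mathrm{inc}(P)}=[h_\lambda]U_P$, so that $h$-positivity of $U_P$ is \emph{equivalent} to the Stanley--Stembridge conjecture (itself only recently settled, by Hikita, outside this paper). Your proposal instead attempts a direct proof, and it has two genuine gaps. First, your opening reduction --- ``it suffices to establish $h_\pi$-positivity of $W_P$ in $NCSym$'' --- targets a statement that is essentially known to be false: the paper itself warns that ``even for some of the simplest digraphs, $W_X$ is not $h$-positive,'' which is precisely why Section 6.2 (following Gebhard--Sagan) passes to the congruence classes $\equiv_{n+1}$ and works with $h_{(\pi)}$-positivity of the grouped coefficients $c_{(\tau)}=\sum_{\sigma\in(\tau)}c_\sigma$ rather than termwise positivity of the $c_\pi$. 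Since $\rho(h_\pi)=\pi!\,h_{\lambda(\pi)}$ with $\pi!>0$, positivity of the grouped sums is all you need and all you can hope for; inducting on the ungrouped coefficients will break already at the base cases.

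Second, even granting the corrected target, the actual content of your argument is deferred: you state that the sign-reversing involution pairing contributions of $W_{P/e}\uparrow$ against ``bad'' listings in $W_{P\setminus e}$ ``is the heart of the matter'' and is not constructed. Without it (or the telescoping identity in your backup plan, which is also not carried out), nothing is proved --- the minus sign in $W_P=W_{P\setminus e}-W_{P/e}\uparrow$ together with the signed M\"obius expansion of $h_\pi\uparrow$ in Equation \ref{indukcijah} leaves you with a signed sum, exactly as you concede. The paper does not claim to close this gap either; its Theorem in Section 6.2 only handles the very special construction $X\cdot T_1\setminus(v_n,v_{n+1})$ (yielding paths and lollipops), and it explicitly leaves the general unit-interval case open. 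So your proposal is a reasonable research outline aligned with the paper's suggested program, but it is not a proof of the stated conjecture, and its stated intermediate goal needs to be weakened to the congruence-class version before the induction can even be set up correctly.
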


In \cite{G}, Gasharov proved that if $P$ is a $(3+1)-$free poset, then $X_{\mathrm{inc}(P)}$ is $s-$positive. Since $\omega(s_\lambda)=s_{\lambda'}$, we obtain the following.

 \begin{theorem}
     If $P$ is $(3+1)-$free, then $U_P$ is $s-$positive.
 \end{theorem}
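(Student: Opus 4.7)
The plan is to derive this result as an essentially immediate consequence of Gasharov's theorem together with Theorem \ref{incomparability}, using the fact noted in the preliminaries that $\omega(s_\lambda) = s_{\lambda'}$. The $(3+1)$-free hypothesis enters only through Gasharov's theorem; once we have $s$-positivity of $X_{\mathrm{inc}(P)}$, transferring it across $\omega$ is purely formal.

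Concretely, I would start by invoking Gasharov's result \cite{G} to write
\[
X_{\mathrm{inc}(P)} = \sum_{\lambda \vdash |P|} c_\lambda s_\lambda, \qquad c_\lambda \geq 0.
\]
Then I would apply $\omega$ to both sides and use $\omega(s_\lambda) = s_{\lambda'}$ to get $\omega(X_{\mathrm{inc}(P)}) = \sum_\lambda c_\lambda s_{\lambda'}$. By Theorem \ref{incomparability}, $X_{\mathrm{inc}(P)} = \omega(U_P)$, and since $\omega$ is an involution on $Sym$, this gives $U_P = \omega(X_{\mathrm{inc}(P)})$. Combining,
\[
U_P = \sum_{\lambda \vdash |P|} c_\lambda s_{\lambda'}.
\]
Because $\lambda \mapsto \lambda'$ is a bijection on partitions of $|P|$, reindexing by $\mu = \lambda'$ yields $U_P = \sum_\mu c_{\mu'} s_\mu$ with all coefficients nonnegative, so $U_P$ is $s$-positive.

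There is no real obstacle: the entire proof is a one-line transfer across the involution $\omega$, and the only nontrivial input is Gasharov's theorem, which we are entitled to cite. If anything, the only thing to be careful about is that we are applying $\omega$ on $Sym$ (not the $\omega$ on $NCSym$ or on $QSym$), but since both $X_{\mathrm{inc}(P)}$ and $U_P$ lie in $Sym$ and the identity $\omega(s_\lambda) = s_{\lambda'}$ is exactly the $Sym$-level statement recalled in Section~2, this causes no issue.
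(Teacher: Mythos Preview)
Your proposal is correct and matches the paper's approach exactly: the paper likewise cites Gasharov's $s$-positivity of $X_{\mathrm{inc}(P)}$ and then applies $\omega$ via Theorem~\ref{incomparability} together with $\omega(s_\lambda)=s_{\lambda'}$.
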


It turns out that, to prove Conjecture \ref{hipoteza}, it suffices to prove the $e$-positivity of $X_{\mathrm{inc}(P)}$ whenever $P$ is both $(3+1)-$ and $(2+2)-$free \cite[Theorem 5.1]{MGP}. These posets are so-called \textit{unit interval orders}. Equivalently, a \textit{unit interval order} is a poset $P$ isomorphic to a finite collection $A_P\subseteq \mathbb{R}$, such that for $x, y\in A_P$, $x<_P y\iff x+1<y$, where $<$ denotes the standard order of the real line $\mathbb{R}$ \cite{SS}. A unit interval order $P$ whose elements are labeled $1, \ldots, n$ increasingly with respect to the standard order of the corresponding $A_P\subseteq \mathbb{R}$ is called a \textit{natural unit interval order}.

We say that a graph $G=([n], E)$ is a \textit{natural unit interval graph} if for all $1\leq i<j<k\leq n$, we have $
\{i, k\}\in E\implies\{i, j\}\in E\land\{j, k\}\in E.$ 
    Now, a \textit{unit interval graph} is a graph $G=(V, E)$ for which there exists a renaming of its vertices with numbers from $[n]$ such that the obtained graph is a natural unit interval graph. Clearly, if $P$ is a natural unit interval order, then $\mathrm{inc}(P)$ is a natural unit interval graph. More precisely, it is not difficult to see that unit interval graphs are exactly the incomparability graphs of unit interval orders.

Let $P$ be a natural unit interval order on $[n]$. Then, $D_P=([n], E)$,  where $E$ satisfies that for all $1\leq i\leq j\leq k\leq l\leq n$, \begin{equation} \label{uslov}
    (j, k)\in E\implies (i, l)\in E.
\end{equation}In other words, if the distance between the points labeled $j$ and $k$ is greater than 1, then the same holds for the distance between the points labeled $i$ and $l$. Note that a natural interval order on $[n]$ is an irreducible poset if and only if there does not exist $i\in[n-1]$ such that $(i, i+1)\in E$, since then $P=P|_{[i]}\cdot P|_{\{i+1, \ldots, n\}}$.

\begin{theorem}
    The number of non isomorphic irreducible unit interval orders on $[n]$ for $n\geq 2$ is the same as the number of non-decreasing functions $f: [n-2]\rightarrow[n-1]$ such that $f(i)\geq i$ for every $i\in [n-1]$. 
\end{theorem}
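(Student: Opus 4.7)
The plan is to encode natural unit interval orders on $[n]$ by a "staircase" sequence, read off exactly which of these correspond to irreducible orders, and finally translate this to the set of functions in the statement. Concretely, given a natural unit interval order $P$ on $[n]$ with $D_P=([n],E)$, I would define
\[a_i=\max\{j:i\leq j\leq n,\ (i,j)\notin E\},\]
which is well defined since $(i,i)\notin E$. Condition (5.3) from the preceding paragraphs immediately gives $a_i\leq a_{i+1}$ for all $i$, so $(a_1,\ldots,a_n)$ is a non-decreasing sequence with $i\leq a_i\leq n$. Conversely, given any such sequence, setting $E=\{(i,j):i<j,\ j>a_i\}$ and re-checking (5.3), transitivity, and antisymmetry yields a natural unit interval order. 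I would thus establish a bijection between natural unit interval orders on $[n]$ and non-decreasing sequences $(a_i)$ with $i\leq a_i\leq n$.

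Next, I would translate irreducibility. As already noted in the text, a natural unit interval order $P$ is reducible iff $(i,i+1)\in E$ for some $i\in[n-1]$; in the $(a_i)$-language this says $a_i=i$. So $P$ is irreducible iff $a_i\geq i+1$ for every $i\in[n-1]$. Combining with $a_n\leq n$ forces $a_n=n$, and then $n\leq a_{n-1}\leq a_n=n$ forces $a_{n-1}=n$ as well, leaving free exactly the non-decreasing tuple $(a_1,\ldots,a_{n-2})$ with $i+1\leq a_i\leq n$. The substitution $f(i)=a_i-1$ for $i\in[n-2]$ then gives the desired bijection to non-decreasing functions $f:[n-2]\to[n-1]$ with $f(i)\geq i$ (the indexing in the paper's final clause appears to be a typo: the natural condition is $i\in[n-2]$).

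The main obstacle I expect is justifying that counting non-isomorphic irreducible unit interval orders on $n$ elements reduces to counting irreducible \emph{natural} unit interval orders on $[n]$, i.e.\ that each isomorphism class has a unique natural representative. I would argue this by observing that the identity $[n]\to[n]$ is automatically a linear extension of any natural unit interval order, and if $P,P'$ are two such orders with an order-isomorphism $\varphi:P\to P'$, then applying (5.3) to the restrictions of $\varphi$ to initial segments of $[n]$ shows that $P$ and $P'$ have identical sequences $(a_i)$, hence coincide as edge sets; equivalently, the sequence $(a_i)$ is a complete isomorphism invariant because sorting the interval centers of any concrete $A_P\subseteq\mathbb{R}$ produces exactly one legal labeling. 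Once this canonical-labeling step is in place, the rest of the argument is a direct counting identification with no further subtleties.
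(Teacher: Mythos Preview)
Your proposal is correct and follows essentially the same approach as the paper. The paper encodes the order via $g(i)=\min N(i)$ (the first out-edge from $i$, or $n+1$ if none) and sets $f(i)=g(i)-2$; your $a_i$ is the last non-edge from $i$, so $a_i=g(i)-1$ and your $f(i)=a_i-1$ coincides with the paper's $f$. You are in fact slightly more explicit than the paper about the converse direction (reconstructing $E$ from the sequence) and about why each isomorphism class has a unique natural labeling, which the paper waves through with ``without loss of generality''; your remark that the statement's final clause should read $i\in[n-2]$ is also apt.
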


\begin{proof}
   Let $P$ be one irreducible unit interval order on $[n]$. Without loss of generality, suppose that the labeling corresponds to the listing of points in $\mathbb{R}$ in increasing order. For $i\in [n]$, let $N(i)=\{j\in [n]\mid (i, j)\in E\}$. Clearly, $N(n)=\emptyset$, but also $N(n-1)=\emptyset$ since $P$ is irreducible. According to condition from Equation \ref{uslov}, if $j\in N(i)$ and $k>j$, then $k\in N(i)$. Therefore, $\mathrm{min} N(i)$ uniquely determines $N(i)$. We define a function $g: [n-2]\rightarrow \{3, \ldots, n+1\}$ with
   \[
g(i) =
  \begin{cases}
  \mathrm{min} N(i) & \text{ if } N(i)\neq \emptyset \\
  n+1 & \text{ otherwise.} 
  \end{cases}
  \]
  This function is non-decreasing according to Equation \ref{uslov} and $g(i)\geq i+2$ since $P$ is irreducible. It is straightforward to see that such $g$ uniquely determines $P$. Finally, define $f(i)=g(i)-2.$
\end{proof}

We will now recall some well-known graph statistics and introduce their analogues for posets. The \textit{chromatic number} $\chi(G)$ of a graph $G$ is the smallest number $m$ such that there exists a proper coloring of $G$ that uses $m$ colors. The \textit{clique number} of $G$, denoted as $\omega(G),$ is the size of the largest clique in $G$, while its \textit{independence number} $\alpha(G)$ is the size of the largest discrete subgraph of $G$. For a poset $P$, we define its \textit{incomparability number}, denoted as $i(P)$, as the cardinality of the maximal subset of incomparable elements of $P$. The \textit{chain number} $c(P)$ of $P$ is the number of elements of the longest chain in $P$. Clearly, $i(P)=\omega(\mathrm{inc}(P))$ and $c(P)=\alpha(\mathrm{inc}(P))$. It is straightforward to see that in general $\chi(G)\geq \omega(G)$. However, if $G$ in a unit interval order, then $\chi(G)=\omega(G)$ \cite[Proposition 2.12]{ST}.

\begin{theorem} If $P$ is a poset, then $i(P)=\mathrm{min}\{m\in\mathbb{N}\mid u_P(-m)\neq 0\}$. If $P$ is a unit interval order, then $i(P)\geq\frac{|P|}{c(P)}$
\end{theorem}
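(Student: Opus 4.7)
My plan is to reduce both assertions to standard facts about the incomparability graph via Corollary \ref{polinomi}, which gives $\chi_{\mathrm{inc}(P)}(m)=(-1)^{|P|}u_P(-m)$.

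For the equation $i(P)=\min\{m\in\mathbb{N}\mid u_P(-m)\neq 0\}$, Corollary \ref{polinomi} tells us that $u_P(-m)\neq 0$ if and only if $\chi_{\mathrm{inc}(P)}(m)\neq 0$. Since the chromatic polynomial enumerates proper colorings, $\chi_G(m)=0$ for every positive integer $m<\chi(G)$, while $\chi_G(m)\geq 1$ for $m\geq\chi(G)$ (any proper coloring using $\chi(G)$ colours remains proper when the palette is enlarged). Thus the minimum in question equals $\chi(\mathrm{inc}(P))$. Next, a set is independent in $\mathrm{inc}(P)$ precisely when its elements form a chain of $P$, so a proper coloring of $\mathrm{inc}(P)$ is the same datum as a partition of $P$ into chains, and $\chi(\mathrm{inc}(P))$ is the minimum size of such a partition. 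By Dilworth's theorem, this minimum equals the maximum antichain size in $P$, which is $\omega(\mathrm{inc}(P))=i(P)$. Chaining these equalities gives the first claim.

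For the inequality, I would invoke Proposition~2.12 of \cite{ST} (cited just above the statement), which asserts $\chi(H)=\omega(H)$ for every unit interval graph $H$. Since $\mathrm{inc}(P)$ is a unit interval graph whenever $P$ is a unit interval order, there is a proper coloring of $\mathrm{inc}(P)$ using exactly $i(P)$ colours. Each colour class is an independent set of $\mathrm{inc}(P)$, hence a chain of $P$ of size at most $c(P)$. Summing the cardinalities of the $i(P)$ classes yields $|P|\leq i(P)\cdot c(P)$, which rearranges to the stated bound.

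No individual step is genuinely difficult; the argument is essentially a concatenation of Corollary \ref{polinomi}, the standard vanishing behaviour of $\chi_G(m)$, Dilworth's theorem, and the quoted unit-interval identity $\chi=\omega$. The only point that requires a moment of care is to observe that the chromatic polynomial of $\mathrm{inc}(P)$ vanishes on exactly $\{1,\ldots,\chi(\mathrm{inc}(P))-1\}$, and then that Dilworth supplies the identification $\chi(\mathrm{inc}(P))=i(P)$ needed to translate between the chromatic and the poset-theoretic statements.
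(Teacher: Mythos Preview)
Your argument is correct and follows essentially the same route as the paper: translate $u_P(-m)$ into $\chi_{\mathrm{inc}(P)}(m)$ via Corollary~\ref{polinomi}, identify $\min\{m:\chi_{\mathrm{inc}(P)}(m)\neq 0\}$ with $\chi(\mathrm{inc}(P))$, and then equate $\chi(\mathrm{inc}(P))$ with $\omega(\mathrm{inc}(P))=i(P)$; for the inequality, use the pigeonhole bound $\chi(G)\geq |G|/\alpha(G)$. The only difference is cosmetic: you explicitly name Dilworth's theorem for the step $\chi(\mathrm{inc}(P))=\omega(\mathrm{inc}(P))$, whereas the paper simply writes this equality in a chain without attribution. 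Note that once you have Dilworth in hand for the first part, your appeal to \cite[Proposition~2.12]{ST} in the second part is redundant---the coloring with $i(P)$ colours already exists for every poset---so the unit-interval hypothesis is not actually used in your argument (nor, in substance, in the paper's).
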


\begin{proof}
The first part follows from
    $i(P)=\omega(\mathrm{inc}(P))=\chi(\mathrm{inc}(P))=\mathrm{min}\{m\in\mathbb{N}\mid \chi_{\mathrm{inc}(P)}(m)\neq 0\}=\mathrm{min}\{m\in\mathbb{N}\mid u_p(-m)\neq 0\}$, where the last equality holds according to Corollary \ref{polinomi}. 

The inequality is the translation of a well-known inequality $\chi(G)\geq\frac{|G|}{\alpha(G)}.$ 
\end{proof}

\subsection{Application of $W_X$ to the question of $e-$positivity of $X_G$}

The Redei-Berge function in noncommuting variables may be very utile in resolving Stanley-Stembridge conjecture. In this subsection, we give an illustration of this approach.

Since allowing the variables commute transforms $W_X$ to $U_X$, many properties of $U_X$ are just special cases of adequate properties of $W_X$. If $X$ and $Y$ are digraphs with labeled vertices, then we label the vertices of $X\cdot Y$ by listing the vertices of $X$ first in the same order as in $X$ and then the vertices of $Y$ in the same order as in $Y$.

 \begin{theorem}\cite[Theorem 3.5]{M}For any two labeled digraphs, \[W_{X\cdot Y}=W_X\cdot W_Y.\]    
\end{theorem}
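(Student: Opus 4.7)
The plan is to prove equality by expanding both sides from the definition of $W$ and producing a bijection between the indexing data that preserves monomials. Expanding the right-hand side gives
\[W_X \cdot W_Y = \sum_{f, f'} \sum_{\sigma \in \Sigma_V(f,X)} \sum_{\sigma' \in \Sigma_{V'}(f',Y)} x_{f(v_1)}\cdots x_{f(v_n)} \cdot x_{f'(w_1)}\cdots x_{f'(w_m)},\]
while the left-hand side is
\[W_{X\cdot Y} = \sum_{g} \sum_{\tau \in \Sigma_{V \sqcup V'}(g, X\cdot Y)} x_{g(v_1)}\cdots x_{g(v_n)} x_{g(w_1)}\cdots x_{g(w_m)}.\]
Any coloring $g \colon V \sqcup V' \to \mathbb{N}$ decomposes uniquely as $g = f \sqcup f'$ with $f = g|_V$ and $f' = g|_{V'}$, and the associated monomials agree under this splitting. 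So the theorem reduces to the purely enumerative identity
\[\#\Sigma_{V\sqcup V'}(g, X\cdot Y) = \#\Sigma_V(f, X) \cdot \#\Sigma_{V'}(f', Y).\]

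I would construct the bijection by restriction: send a friendly $\tau$ to the pair $(\sigma, \sigma')$, where $\sigma$ is the subsequence of $\tau$ consisting of $V$-vertices and $\sigma'$ the subsequence of $V'$-vertices. The weak monotonicity of $\sigma$ and $\sigma'$ is inherited from that of $\tau$. The only delicate point is strict increase along $X$-edges: if $\sigma_i$ and $\sigma_{i+1}$ form an $X$-edge but are not adjacent in $\tau$, then any $V'$-vertex $w$ occurring between them in $\tau$ satisfies $(\sigma_i, w) \in E(X\cdot Y)$ by construction of the product, which forces $f(\sigma_i) < g(w) \leq f(\sigma_{i+1})$. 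The symmetric argument handles $\sigma'$.

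For the inverse, the crucial structural observation is that every pair in $V \times V'$ is an edge of $X \cdot Y$, so inside any maximal run of $\tau$-entries of constant $g$-value, no $V$-vertex may be immediately followed by a $V'$-vertex. Such a run must therefore list all $V'$-vertices of that $g$-value first (in their $\sigma'$-order) and then all $V$-vertices of that $g$-value (in their $\sigma$-order). This forces a unique merge of $(\sigma, \sigma')$ into $\tau$. I would then verify directly that this canonical merge is $(g, X\cdot Y)$-friendly: within a block, consecutive $V$-pairs come from consecutive entries of $\sigma$ sharing the same $f$-value, hence avoid $E(X)$; the single $V'$-to-$V$ adjacency inside a block is not an edge of $X \cdot Y$; and between different blocks the strict jump in $g$ automatically accommodates any edge.

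The main obstacle, and the step I would treat carefully, is this uniqueness-of-merge analysis: simultaneously excluding every other interleaving inside a $g$-constant block and confirming that the forced interleaving remains compatible with $E(X)$- and $E(Y)$-edges that might occur across block boundaries. Once the bijection is established, summing over all $g$ rearranges $W_{X \cdot Y}$ exactly into the product $W_X \cdot W_Y$.
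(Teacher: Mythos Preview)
The paper does not actually prove this theorem; it is quoted from \cite{M} without proof. Your argument is the natural direct one from the definition of $W$, and it is correct.

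One small expository point: the ``symmetric argument'' for $\sigma'$ is not literally symmetric, since the cross-edges of $X\cdot Y$ go from $V$ to $V'$ and not the other way. Concretely, if $\sigma'_i$ and $\sigma'_{i+1}$ are consecutive $V'$-entries of $\tau$ with a $V$-vertex $v$ between them, then $(\sigma'_i,v)$ is \emph{not} an edge; instead you should look at the $V$-vertex immediately preceding $\sigma'_{i+1}$ in $\tau$, say $v$, and use that $(v,\sigma'_{i+1})$ is an edge, forcing $f'(\sigma'_i)\leq g(v)<f'(\sigma'_{i+1})$. This is a cosmetic asymmetry, not a gap, and your description of the inverse merge (all $V'$-entries before all $V$-entries within each $g$-level) is exactly right and makes the bijection go through.
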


\begin{lemma}\label{proizvodd}
    Let $X=(V, E)$ be an $n-$vertex digraph and let $T_m$ be the discrete digraph on $m$ vertices. If $W_X=\sum c_\pi h_\pi$, then $W_{X\cdot T_m}=\sum c_\pi h_{\pi/n+1, \ldots, n+m}$.
\end{lemma}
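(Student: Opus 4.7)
The plan is to combine the multiplicativity $W_{X\cdot T_m}=W_X\cdot W_{T_m}$ from the immediately preceding theorem with the evaluation $W_{T_m}=h_{[m]}$ from Example \ref{diskretan}. Writing $W_X=\sum_\pi c_\pi h_\pi$ and invoking linearity then reduces the claim to the single algebraic identity
\[
h_\pi\cdot h_{[m]}=h_{\pi/n+1,\ldots,n+m}
\]
in $NCSym_{n+m}$, valid for every $\pi\in\Pi_n$, where the right-hand side denotes the partition of $[n+m]$ whose blocks are those of $\pi$ together with the single extra block $\{n+1,\ldots,n+m\}$.

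To establish this identity I would pass to the power sum basis. Directly from Equation \ref{pprekom}, the defining monomials for $p_\tau\in NCSym_n$ and $p_\rho\in NCSym_m$ use disjoint positions and independent indices, so their concatenation product satisfies
\[
p_\tau\cdot p_\rho=p_{\tau\sqcup(\rho+n)},
\]
where $\rho+n$ denotes $\rho$ shifted to the alphabet $\{n+1,\ldots,n+m\}$. Expanding both factors via $h_\pi=\sum_{\tau\le\pi}|\mu(\widehat 0,\tau)|p_\tau$ from Equation \ref{pprekoh} gives
\[
h_\pi\cdot h_{[m]}=\sum_{\tau\le\pi}\ \sum_{\rho\le\{[m]\}}|\mu(\widehat 0,\tau)|\,|\mu(\widehat 0,\rho)|\,p_{\tau\sqcup(\rho+n)}.
\]
Since refinements of $\pi\sqcup\{n+1,\ldots,n+m\}$ cannot merge indices across the $\{n+1,\ldots,n+m\}$-block, the map $(\tau,\rho)\mapsto\tau\sqcup(\rho+n)$ is a bijection onto the refinements of $\pi/n+1,\ldots,n+m$. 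Moreover, because $|\mu(\widehat 0,\gamma)|=\prod_{B\in\gamma}(|B|-1)!$ is multiplicative across disjoint-union decompositions, we have $|\mu(\widehat 0,\tau\sqcup(\rho+n))|=|\mu(\widehat 0,\tau)|\,|\mu(\widehat 0,\rho)|$. Reassembling the double sum by Equation \ref{pprekoh} yields $h_\pi\cdot h_{[m]}=h_{\pi/n+1,\ldots,n+m}$, and applying this to each term of $W_X=\sum c_\pi h_\pi$ completes the argument.

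The computation is essentially bookkeeping; the only point requiring care is the multiplicativity of $|\mu(\widehat 0,\cdot)|$ under the $\sqcup$ operation, which is immediate from the product-of-factorials formula but easy to miss. All other ingredients — the product formula for $W$, the value of $W_{T_m}$, and the $p$-to-$h$ transition — have already been recorded earlier in the section.
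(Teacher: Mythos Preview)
Your proposal is correct and follows exactly the same route as the paper: use $W_{X\cdot T_m}=W_X\cdot W_{T_m}$ together with $W_{T_m}=h_{[m]}$, then invoke $h_\pi\cdot h_{[m]}=h_{\pi/n+1,\ldots,n+m}$ term by term. The only difference is that the paper treats the last identity as a known structural fact about the product in $NCSym$ and states the whole proof in one line, whereas you supply an explicit verification via the $p$-basis; your added justification is correct but not strictly required.
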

\begin{proof}
    $W_{X\cdot T_m}=W_X\cdot W_{T_m}=\sum c_\pi h_\pi\cdot h_{[m]}=\sum c_\pi h_{\pi/n+1, \ldots, n+m}.$
\end{proof}



In what follows, we are going to explore the $e-$positivity of the chromatic symmetric function by exploring the $h-$positivity of $W_X$. Example \ref{diskretan} shows that the noncommutative Redei-Berge function of the discrete digraph is $h-$positive. On the other hand, even for some of the simplest digraphs, $W_X$ is not $h-$positive. In order to overcome this difficulty, we need to group the coefficients in the expansion of $W_X$ in $h$ basis that come from set partitions of the same type. 

Let $\pi\in \Pi_n$ be fixed. By $B_{\pi, i}$ we denote the block of $\pi$ that contains $i$. If $(\alpha_1, \ldots, \alpha_l)\models n+1$, let $P(\alpha)$ denote the set of all parititons $\tau=B_1/\ldots/B_l$ of type $\alpha$ such that $\tau\leq \pi+(n+1)$ and $n+1\in B_1$. In the same manner as in \cite[Lemma 5.1]{SG}, from Equation \ref{indukcijah}, we are able to deduce the next theorem.

\begin{theorem} \label{indukovanje}
    If $h_\pi\uparrow=\sum_{\tau\in \Pi_{n+1}} c_\tau h_\tau$, then $c_{\tau}=0$ unless $\tau\leq\pi+(n+1)$ and for any composition $\alpha,$ we have
    \[
\sum_{\tau\in P(\alpha)} c_\tau =
  \begin{cases}
  -\frac{1}{|B_{\pi, n}|} & \text{ if } P(\alpha)=\{\pi/n+1\}\\
\ \   \frac{1}{|B_{\pi, n}|} & \text{ if } P(\alpha)=\{\pi+(n+1)\}\\
 \ \ \ \ \   0 & \text{ otherwise}.
  \end{cases}
  \]
\end{theorem}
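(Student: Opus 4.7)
My plan is to start from Equation (\ref{indukcijah}) and directly extract the coefficient $c_\tau=[h_\tau](h_\pi\uparrow)$. The prefactor $|\mu(\widehat{0},\sigma)|/|\mu(\widehat{0},\sigma+(n+1))|$ can be simplified using the product formula $|\mu(\widehat{0},\rho)|=\prod_{B\in\rho}(|B|-1)!$: since $\sigma+(n+1)$ differs from $\sigma$ only by enlarging the block $B_{\sigma,n}$ by the element $n+1$, all block factors cancel except one, leaving the ratio $1/|B_{\sigma,n}|$. Interchanging summation orders in (\ref{indukcijah}) then yields
\[c_\tau=\sum_{\substack{\sigma\leq\pi\\\tau\leq\sigma+(n+1)}}\frac{\mu(\tau,\sigma+(n+1))}{|B_{\sigma,n}|}.\]
Since every $\sigma\leq\pi$ satisfies $\sigma+(n+1)\leq\pi+(n+1)$, the summand is vacuous and $c_\tau=0$ unless $\tau\leq\pi+(n+1)$, which settles the first claim.

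Next I would handle the two singleton cases by direct computation. If $\tau=\pi+(n+1)$, then $\tau\leq\sigma+(n+1)\leq\pi+(n+1)=\tau$ forces $\sigma=\pi$, and $\mu(\tau,\tau)=1$ yields $c_\tau=1/|B_{\pi,n}|$. If instead $\tau=\pi/n+1$, then every block of $\pi$ appears as a block of $\tau$ and must sit inside some block of $\sigma$, so $\pi\leq\sigma$; combined with $\sigma\leq\pi$ this forces $\sigma=\pi$, and the interval $[\pi/n+1,\pi+(n+1)]$ in $\Pi_{n+1}$ is the two-element chain that merges $\{n+1\}$ into $B_{\pi,n}$, so $\mu(\pi/n+1,\pi+(n+1))=-1$ and $c_\tau=-1/|B_{\pi,n}|$.

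For the remaining compositions $\alpha$, the plan is to swap the order of summation,
\[\sum_{\tau\in P(\alpha)}c_\tau=\sum_{\sigma\leq\pi}\frac{1}{|B_{\sigma,n}|}\sum_{\substack{\tau\in P(\alpha)\\\tau\leq\sigma+(n+1)}}\mu(\tau,\sigma+(n+1)),\]
and expand each Möbius value block-by-block via $\mu(\tau,\eta)=\prod_{B\in\eta}\mu(\tau|_B,\widehat{1}_B)$. This factorizes the inner sum according to how the parts of $\alpha$ are distributed among the blocks of $\sigma+(n+1)$. Following the strategy of \cite[Lemma 5.1]{SG}, the aim is then to exhibit a sign-reversing pairing on the contributing configurations $(\sigma,\tau)$ that flips $\mu(\tau,\sigma+(n+1))$ while preserving the type $\alpha$ and the constraint $n+1\in B_1$, arranged so that only the two extremal configurations $\tau=\pi/n+1$ and $\tau=\pi+(n+1)$ survive.

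The main obstacle is calibrating this pairing against the $1/|B_{\sigma,n}|$ weight, which is sensitive to precisely the block one would like to modify. I anticipate pairing entire orbits rather than individual configurations and exploiting the identity, implicit in the prefactor computation, that moving an element of $B_{\pi,n}$ in or out of $B_{\sigma,n}$ scales $|B_{\sigma,n}|$ by a factor cancelled by the corresponding change in the block-wise Möbius contribution; this matches the technical delicacy of the analogous cancellation in \cite[Lemma 5.1]{SG}.
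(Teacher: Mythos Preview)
Your approach is essentially the same as the paper's: the paper gives no self-contained argument at all, merely stating that the result follows from Equation~(\ref{indukcijah}) ``in the same manner as in \cite[Lemma 5.1]{SG}''. Your derivation of $c_\tau$ from (\ref{indukcijah}), the vanishing for $\tau\not\leq\pi+(n+1)$, and the two singleton computations are correct and in fact more detailed than anything the paper provides; for the remaining ``otherwise $0$'' case you, like the paper, defer to the Gebhard--Sagan cancellation argument rather than writing it out.
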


For $i\in [n]$, we define an equivalence relation on $\Pi_n$ called \textit{congruence modulo }$i$ by \[\sigma\equiv_i\tau \iff \lambda(\sigma)=\lambda(\tau)\ \land \ |B_{\sigma, i}|=|B_{\tau, i}|.\] We extend this definition so that \[h_\sigma\equiv_i h_\tau\iff \sigma\equiv_i\tau.\] Let $(\tau)$ and $e_{(\tau)}$ denote the equivalence classes of $\tau$ and $e_\tau$ respectively. Clearly,
\[\sum c_\sigma e_\sigma\equiv_i\sum c_{(\tau)}e_{(\tau)}, \text{ where } c_{(\tau)}=\sum_{\sigma\in (\tau)}c_\sigma.\]
Theorem \ref{indukovanje} immediately yields the following.
\begin{corollary}
    For any $\pi\in \Pi_n$, \[h_{\pi}\uparrow\ \equiv_{n+1}\ \frac{1}{|B_{\pi, n}|}h_{(\pi+(n+1))}-\frac{1}{|B_{\pi, n}|}h_{(\pi/n+1)}.\]
\end{corollary}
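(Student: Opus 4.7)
The plan is to take the expansion $h_\pi\uparrow = \sum_\tau c_\tau h_\tau$ given by Theorem \ref{indukovanje} and simply re-group it according to $\equiv_{n+1}$-equivalence classes. Each such class in $\Pi_{n+1}$ is determined by a pair $(\lambda,k)$ consisting of the common type $\lambda=\lambda(\tau)$ together with the common size $k=|B_{\tau,n+1}|$ of the block containing $n+1$. For a fixed class $C=(\lambda,k)$, the elements $\tau\in C$ satisfying the additional constraint $\tau\le\pi+(n+1)$ form exactly the set $P(\alpha)$ for any composition $\alpha$ whose first part is $k$ and whose multiset of parts equals $\lambda$; since $P(\alpha)$ depends on $\alpha$ only through $\alpha_1$ and the multiset of its remaining parts, the associated set is well-defined by $C$.

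Combining this with the vanishing $c_\tau=0$ outside $\tau\le\pi+(n+1)$ (again from Theorem \ref{indukovanje}), the class sum reduces to
\[
\sum_{\tau\in C}c_\tau\;=\;\sum_{\tau\in P(\alpha)}c_\tau,
\]
and the evaluation of Theorem \ref{indukovanje} applies directly: the right-hand side is $-1/|B_{\pi,n}|$ when $P(\alpha)=\{\pi/n+1\}$, equals $1/|B_{\pi,n}|$ when $P(\alpha)=\{\pi+(n+1)\}$, and is zero otherwise. These two exceptional situations correspond exactly to $C$ being the class of $\pi/n+1$, with marked type $(\lambda(\pi/n+1),1)$, and to $C$ being the class of $\pi+(n+1)$, with marked type $(\lambda(\pi+(n+1)),|B_{\pi,n}|+1)$. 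The singleton identity of $P(\alpha)$ holds in each case because any refinement of $\pi+(n+1)$ matching the prescribed block sizes has no room to split further.

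Summing the two surviving contributions over all classes produces the claimed congruence. There is no serious obstacle here: the corollary is essentially a direct repackaging of Theorem \ref{indukovanje}. The only bookkeeping worth mentioning is that the two exceptional classes are always distinct, since their $k$-values $1$ and $|B_{\pi,n}|+1$ differ (as $|B_{\pi,n}|\ge 1$). The main conceptual point — namely, that no other class can inadvertently yield $P(\alpha)\in\{\{\pi/n+1\},\{\pi+(n+1)\}\}$ — follows from the observation that if such a singleton contains either $\pi/n+1$ or $\pi+(n+1)$, then $C$ is forced to be the class of that very partition.
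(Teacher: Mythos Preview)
Your proof is correct and follows exactly the approach the paper intends: the paper simply states that ``Theorem~\ref{indukovanje} immediately yields the following,'' and your argument is precisely the detailed unpacking of that immediacy---grouping the coefficients $c_\tau$ by $\equiv_{n+1}$-classes, identifying each class sum with a $\sum_{\tau\in P(\alpha)} c_\tau$, and reading off the result from the trichotomy in Theorem~\ref{indukovanje}. Your bookkeeping (distinctness of the two exceptional classes, and why no other class can hit one of the two singletons) is accurate and fills in the only points that might give a reader pause.
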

We are now able to prove our $h-$positivity result.

\begin{theorem}
 Let $X=(V, E)$ be a digraph with vertices labeled $v_1, \ldots v_n$ such that for all $i\in[n-1]$, $(v_n, v_i)\notin E$. If $W_X$ is $h$-positive, then $W_{(X\cdot T_1)\setminus (v_n, v_{n+1})}$ is $h-$positive as well.
\end{theorem}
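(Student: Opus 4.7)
The plan is to apply the deletion-contraction formula (Theorem \ref{dcw}) to the distinguished edge $e = (v_n, v_{n+1})$ of $X \cdot T_1$ and then identify the contracted digraph with $X$ itself. Rearranging the deletion-contraction identity $W_{X \cdot T_1} = W_{(X \cdot T_1) \setminus e} - W_{(X \cdot T_1)/e} \uparrow$ gives
\[W_{(X \cdot T_1) \setminus e} = W_{X \cdot T_1} + W_{(X \cdot T_1)/e} \uparrow.\]
The first thing I would check is that $(X \cdot T_1)/e$ is isomorphic as a labeled digraph to $X$. This is precisely where the hypothesis enters: since $v_n$ has no outgoing edges in $X$ and $v_{n+1}$ has no outgoing edges in $X \cdot T_1$, the merged vertex inherits no outgoing edges, and its incoming edges come exactly from the $v_i$ with $(v_i, v_n) \in E$. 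Therefore $W_{(X \cdot T_1)/e} = W_X$.

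Writing $W_X = \sum_\pi c_\pi h_\pi$ with $c_\pi \geq 0$ by hypothesis, Lemma \ref{proizvodd} gives $W_{X \cdot T_1} = \sum_\pi c_\pi h_{\pi/n+1}$, which is manifestly $h$-positive. Substituting then yields
\[W_{(X \cdot T_1) \setminus e} = \sum_\pi c_\pi \bigl(h_{\pi/n+1} + h_\pi \uparrow\bigr),\]
so the task reduces to showing that $h_{\pi/n+1} + h_\pi \uparrow$ is $h$-positive (in the relevant sense) for each individual $\pi$. The Corollary to Theorem \ref{indukovanje} gives
\[h_{\pi/n+1} + h_\pi \uparrow \ \equiv_{n+1}\ \Bigl(1 - \tfrac{1}{|B_{\pi,n}|}\Bigr) h_{(\pi/n+1)} + \tfrac{1}{|B_{\pi,n}|} h_{(\pi + (n+1))},\]
and both coefficients on the right are nonnegative since $|B_{\pi,n}| \geq 1$. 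Summing with the nonnegative weights $c_\pi$ preserves positivity.

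The main obstacle is the careful verification that $(X \cdot T_1)/e$ really equals $X$ as a labeled digraph: one must unpack every clause of the digraph contraction rule, and the hypothesis on $v_n$ is exactly what prevents the merged vertex from acquiring spurious outgoing edges. A secondary subtlety is that ``$h$-positive'' must be read in the grouped sense used throughout this section (congruence modulo $\equiv_{n+1}$, in the spirit of Gebhard--Sagan), since strict coefficient-wise $h$-positivity of $h_\pi \uparrow$ itself can fail even when its class sums are well behaved.
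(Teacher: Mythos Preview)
Your proposal is correct and follows essentially the same route as the paper: apply deletion-contraction to $(v_n,v_{n+1})$, identify the contraction with $X$ via the hypothesis on $v_n$, expand $W_{X\cdot T_1}$ through Lemma~\ref{proizvodd}, and conclude class-sum $h$-positivity from the Corollary to Theorem~\ref{indukovanje}. Your explicit remark that positivity here must be read in the $\equiv_{n+1}$ grouped sense is exactly the right caveat and matches how the paper uses the result.
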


\begin{proof}
  We apply deletion-contraction to $X\cdot T_1$ and edge $(v_n, v_{n+1})$: \[W_{X\cdot T_1}=W_{X\cdot T_1\setminus (v_n, v_{n+1})}-W_{X\cdot T_1/ (v_n, v_{n+1})}\uparrow.\]  
  According to Lemma \ref{proizvodd}, if $W_X=\sum c_\pi h_\pi$, then $W_{X\cdot T_1}=\sum c_\pi h_{\pi/n+1}$. On the other hand, the condition on vertex $v_n$ gives that $X\cdot T_1/ (v_n, v_{n+1})\cong X$ as digraphs with labeled vertices and hence $W_{X\cdot T_1/ (v_n, v_{n+1})}=W_X.$  Therefore,  since induction operation respects congruence relation \cite[Lemma 6.2]{SG},\[W_{X\cdot T_1\setminus (v_n, v_{n+1})}=W_{X\cdot T_1}+W_{X}\uparrow\ \equiv_{n+1}\ \sum c_{(\tau)}h_{(\tau/n+1)}+\sum c_{(\tau)}h_{(\tau)}\uparrow\]\[\equiv_{n+1}\ \sum c_{(\tau)}\left (1-\frac{1}{|B_{\tau, n}|}\right)h_{(\tau/n+1)}+\sum \frac{c_{(\tau)}}{|B_{\tau, n}|}h_{(\tau+(n+1))}.\] Since $c_{(\tau)}\geq 0$ and $|B_\tau|\geq 1$ for all $\tau$, this finishes the proof.
\end{proof}

 The condition on the vertex $v_n$ given in the previous theorem is met in a large class of digraphs. Namely, if $X$ is a digraph assigned to some poset $P$, then any maximal element of $P$ can play the role of $v_n$. More generally, any acyclic digraph contains at least one such vertex. Also, note that $X$ from the previous theorem does not need to be a digraph assigned to some poset $P$, hence this result cannot be obtained from the results for the chromatic function in noncommuting variables.

 The \textit{lollipop graph} $L_{m, n}$  is obtained by connecting with an edge one vertex of the complete graph $K_m$ and one leaf of the path $P_n$. 

 \begin{corollary}
    Paths and lollipops have $e-$positive chromatic symmetric function.
 \end{corollary}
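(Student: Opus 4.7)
The plan is to bootstrap the previous theorem, starting from a suitable discrete base case and iterating, to produce posets whose incomparability graphs are exactly lollipops (and in particular, paths).

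First I would take $X=T_m$, the discrete digraph on labeled vertices $v_1,\ldots,v_m$. By Example~\ref{diskretan}, $W_{T_m}=h_{[m]}$ is $h$-positive, and $v_m$ trivially satisfies $(v_m,v_i)\notin E$ for all $i$. Applying the previous theorem produces a digraph $X^{(1)}=(T_m\cdot T_1)\setminus(v_m,v_{m+1})$ on $v_1,\ldots,v_{m+1}$ whose only edges are $(v_i,v_{m+1})$ for $i\in[m-1]$; this is the digraph $D_{P^{(1)}}$ of a poset $P^{(1)}$ in which $v_{m+1}$ lies above $v_1,\ldots,v_{m-1}$ and is incomparable to $v_m$, so $\mathrm{inc}(P^{(1)})$ is $K_m$ together with a pendant edge $\{v_m,v_{m+1}\}$. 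Crucially, $v_{m+1}$ is again a sink in $X^{(1)}$, so the hypothesis of the previous theorem is restored and the construction can be iterated.

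Iterating this step $n$ times yields a digraph $X^{(n)}$ with $W_{X^{(n)}}$ $h$-positive. A direct induction shows $X^{(n)}=D_{P^{(n)}}$ for a poset $P^{(n)}$ whose incomparability graph is the lollipop $L_{m,n}$: at each step, the freshly added vertex is comparable (above) all prior vertices except the most recent one, which is exactly what produces a new pendant edge in $\mathrm{inc}$. Specializing to $m=1$ (or $m=2$) recovers every path.

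Finally, to pass from $h$-positivity of $W_{P^{(n)}}$ to $e$-positivity of $X_{L_{m,n}}$, I would apply the commuting operator $\rho$ to the identity $Y_{\mathrm{inc}(P^{(n)})}=\omega(W_{P^{(n)}})$ from Theorem~\ref{nekomutirajuce}: since $\rho$ sends each $h_\pi$ to a positive scalar multiple of $h_{\lambda(\pi)}$ and $\omega$ on $Sym$ swaps $h_\lambda$ with $e_\lambda$, an $h$-positive $W_{P^{(n)}}$ forces $X_{L_{m,n}}=X_{\mathrm{inc}(P^{(n)})}$ to be $e$-positive. The only delicate point is the inductive bookkeeping: one must verify at each step that the resulting digraph genuinely arises as $D_{P^{(k)}}$ for a poset (i.e.\ that transitivity is preserved thanks to $v_{m+k-1}$ being a sink at the moment of insertion), that the labeling continues to place the sink at position $m+k$, and that the incomparability graph grows precisely by a pendant on the previous endpoint. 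All three are routine once the construction is unwound carefully.
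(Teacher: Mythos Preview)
Your proposal is correct and follows essentially the same route as the paper: start from $T_m$ (with $m=1$ for paths), iterate the previous theorem to build a chain of digraphs $X^{(k)}$ each satisfying the sink hypothesis at the newest vertex, identify the result as $D_{P^{(n)}}$ with $\mathrm{inc}(P^{(n)})=L_{m,n}$, and conclude $e$-positivity of $X_{L_{m,n}}$ from $h$-positivity of $W_{P^{(n)}}$ via Theorem~\ref{nekomutirajuce} and $\rho$. You supply more detail than the paper (the explicit description of the edges after each step, the transitivity check, and the pendant-edge bookkeeping), but the argument is the same.
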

 \begin{proof}
     If we start with $T_1$ and apply successively the construction from the previous theorem $n-1$ times, we obtain a digraph of a poset whose incomparability graph is $P_n$, the path on $n$ vertices. Similarly, if we start with $T_m$ instead of $T_1$, the obtained digraph is assigned to a poset whose incomparability graph is exactly $L_{m, n}$. Since these digraphs are $h-$positive, their incomparability graphs are $e-$positive.
 \end{proof}

Finally, we note that this approach can be further exploited by applying the generalization of Theorem \ref{indukovanje} on the same way as in \cite{SG}. Clearly, focusing on an edge that is a covering gives us nothing new in comparison to \cite{SG}. However, the advantage of this approach is the possibility to execute the deletion-contraction on $D_P$ with an edge that is not a covering. In that case, the obtained digraph will not be a digraph assigned to some poset. On the other hand, the ambient of digraphs could be simpler for carrying out computations, compared to the ambient of graphs. As we shall see in the upcoming section, there is a nice way to decompose the Redei-Berge function of a digraph. This decomposition does not have its analogue when dealing with the chromatic symmetric function, which supports our belief that working with $U_X$ can sometimes be easier than working with $X_G$.

\section{Bags of sticks}

In this section, we are going to express the Redei-Berge function of any digraph $X$ in terms of the Redei-Berge functions of some simple subdigraphs of $X$.

\subsection{Linear breakdown}

In \cite{M} and \cite{MS}, the authors gave an interesting decomposition technique for the Redei-Berge function. Here, we give a slight improvement of this result. A \textit{bag of sticks} is a digraph that can be written as a disjoint union of directed paths, where paths with no edges are allowed. If $\lambda=(\lambda_1, \ldots, \lambda_k)\vdash n$, we write $P_\lambda$ for a bag of sticks that is a disjoint union of paths $P_{\lambda_1}, \ldots, P_{\lambda_k}$.

\begin{theorem} \cite[Theorem 3.15]{M}\cite[Corollary 4.3]{MS} \label{razbijanje} 
    Let $X=(V, E)$ be a digraph and let $F\subseteq E$ be a subset such that the subdigraph $(V, F)$ is not a bag of sticks. Then,  \begin{equation}\label{podskupovi}W_X=\sum_{\substack{S\subseteq F \\ S\neq\emptyset}}(-1)^{|S|-1}W_{X\setminus S}.\end{equation}
    Consequently, \[U_X=\sum_{\substack{S\subseteq F \\ S\neq\emptyset}}(-1)^{|S|-1}U_{X\setminus S}.\]
\end{theorem}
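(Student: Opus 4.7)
The plan is to rewrite the identity as
\[
\sum_{S \subseteq F} (-1)^{|S|}\, W_{X \setminus S} = 0,
\]
and verify it by expanding each term via the definition of $W_{X\setminus S}$ and comparing coefficients monomial by monomial. The corresponding identity for $U_X$ follows by applying the commuting operator $\rho$ to both sides.

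First I would fix a coloring $f: V \to \mathbb{N}$ and compute the coefficient of the noncommuting monomial $x_{f(v_1)} x_{f(v_2)} \cdots x_{f(v_n)}$ on the left-hand side. By definition this coefficient in $W_{X \setminus S}$ counts the $V$-listings $\sigma$ with $f \circ \sigma$ weakly increasing and satisfying $(\sigma_j, \sigma_{j+1}) \notin E \setminus S$ whenever $f(\sigma_j) = f(\sigma_{j+1})$. Setting
\[
T(f, \sigma) := \{(\sigma_j, \sigma_{j+1}) \in E \,:\, f(\sigma_j) = f(\sigma_{j+1})\},
\]
this friendliness condition becomes simply $T(f, \sigma) \subseteq S$. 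Swapping the order of summation and using the elementary identity
\[
\sum_{S:\, T(f,\sigma) \subseteq S \subseteq F} (-1)^{|S|} \;=\; (-1)^{|F|} \cdot \bigl[\,T(f,\sigma) = F\,\bigr],
\]
the coefficient of our fixed monomial in $\sum_{S \subseteq F} (-1)^{|S|} W_{X\setminus S}$ collapses to $(-1)^{|F|}$ times the number of weakly increasing listings $\sigma$ with $T(f, \sigma) = F$. So the claim reduces to showing that no such $\sigma$ exists when $(V, F)$ is not a bag of sticks.

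This last step is the structural heart of the proof, and where I expect the only real content to lie. Suppose for contradiction that $T(f, \sigma) = F$ for some pair $(f, \sigma)$. Then every edge of $F$ must appear as a consecutive pair $(\sigma_j, \sigma_{j+1})$ in the single listing $\sigma$. Since $\sigma$ is a bijection, each vertex $v$ occupies a unique position $\sigma^{-1}(v)$; this forces both the in-degree and out-degree of $v$ in $(V, F)$ to be at most $1$. Moreover, a directed cycle $v_{i_1} \to v_{i_2} \to \cdots \to v_{i_k} \to v_{i_1}$ in $F$ would demand positions $j_l := \sigma^{-1}(v_{i_l})$ with $j_{l+1} \equiv j_l + 1$ cyclically, giving the contradiction $j_1 = j_1 + k$. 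Hence $(V, F)$ must be a disjoint union of directed paths, i.e.\ a bag of sticks, contradicting the hypothesis. The main obstacle is merely to unpack carefully what $T(f, \sigma) = F$ forces combinatorially; the inclusion-exclusion swap itself is routine.
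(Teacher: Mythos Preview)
The paper does not actually prove this theorem; it is quoted from the external references \cite{M} and \cite{MS} without argument, so there is no in-paper proof to compare against. Your direct proof is correct and complete: the coefficient-by-coefficient inclusion--exclusion is valid (with the understanding that when $T(f,\sigma)\not\subseteq F$ the inner sum is over an empty index set, hence vanishes, consistent with $[T(f,\sigma)=F]=0$), and the structural step is sound---if $T(f,\sigma)=F$ then every edge of $F$ occurs as a consecutive pair $(\sigma_j,\sigma_{j+1})$ in the single bijective listing $\sigma$, which forces all in- and out-degrees in $(V,F)$ to be at most~$1$ and rules out loops and directed cycles, so $(V,F)$ would be a bag of sticks. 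Applying $\rho$ then gives the commutative statement.
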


 Hence, the Redei-Berge function of any digraph that is not a bag of sticks can be expressed as a linear combination of the Redei-Berge functions of appropriate subdigraphs. The same expansion can be applied again to any digraph appearing on the right side of the Equation \ref{podskupovi} that is not a bag of sticks. If we continue with this procedure, we will be able to express the Redei-Berge function of the original digraph as a linear combination of the Redei-Berge functions of its spanning subdigraphs that are bags of sticks. We call this expression the \textit{linear breakdown} of $X$. 

 \begin{example}
     Let $D_0$, $D_1, D_2$ and $D_3$ be the digraphs with vertices $1, 2, 3, 4$ shown in the diagram below. 
    
    \begin{tikzcd}
  & 2 &  &              & 2 &  &                        & 2 &  &                                   & 2 \\
1 & 3 &  & 1 \arrow[ru] & 3 &  & 1 \arrow[ru] \arrow[r] & 3 &  & 1 \arrow[ru] \arrow[r] \arrow[rd] & 3 \\
  & 4 &  &              & 4 &  &                        & 4 &  &                                   & 4
\end{tikzcd}
 Since $D_3$ is not a bag of sticks, we can apply previous theorem, where we take $F$ to be the whole set of edges of $D_3$. This yields $U_{D_3}=3U_{D_2}-3U_{D_1}+U_{D_0}$. Digraphs $D_1$ and $D_0$ are bags of sticks, so we cannot simplify their Redei-Berge functions. However, $D_2$ is not a bag of sticks and therefore $U_{D_2}=2U_{D_1}-U_{D_0}$. Therefore, $U_{D_3}=3(2U_{D_1}-U_{D_0})-3U_{D_1}+U_{D_0}=3U_{D_1}-2U_{D_0}.$

 \end{example}

  At this point, one question naturally arises : If $X=(V, E)$ is a digraph and $S\subseteq E$ such that $(V, S)$ is a bag of sticks, what is the coefficient of $U_{(V, S)}$ in the linear breakdown of $X$? The answer to this question is given in the upcoming theorem.

 The \textit{edge poset} $\mathcal{E}_X$ of digraph $X=(V, E)$ is a poset on the power set of $E$ defined by the following relation:
 $A<_{\mathcal{E}_X}B\iff A\subsetneq B\land (V, B)$ is not a bag of sticks. In other words, we want to modify relation $\subseteq$ so that the sets of edges that induce bags of sticks are exactly the minimal elements of $\mathcal{E}_X$. If $X=(V, E)$ is a digraph and $S\subseteq E$ such that $(V, S)$ is a bag of sticks $P_\lambda$, we denote $\lambda$ by $\lambda(S)$. 
 
 If $P$ is a poset with a unique minimal element $0_P$ and a unique maximal element $1_P$, let $\mathcal{C}(P)$ denote the set of all chains from $0_P$ to $1_P$. For such poset $P$, we define $\xi(P)$ as
\[
\xi(P)=\sum_{c\in\mathcal{C}(P)}(-1)^{l(c)},
\]where $l(c)$ denotes the length of a chain $c$, i.e. $|c|-1$. Note that for posets $P$ which satisfy the chain condition (all maximal chains from $0_P$ to $1_P$ have the same length), $\xi(P)$ is actually the Möbius function of $P$.

 \begin{theorem} \label{razvoj}
     If $X=(V, E)$ is a digraph that is not a bag of sticks, then 
     \[W_X=\sum_{S\in\mathrm{Min}(\mathcal{E}_X)}(-1)^{|E|-|S|}\xi([S, E])W_{P_{\lambda(S)}}.\]Consequently, \[U_X=\sum_{S\in\mathrm{Min}(\mathcal{E}_X)}(-1)^{|E|-|S|}\xi([S, E])U_{P_{\lambda(S)}}.\]
 \end{theorem}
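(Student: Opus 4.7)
The plan is to repeatedly apply Theorem \ref{razbijanje} to $W_X$, peeling off one layer of the poset $\mathcal{E}_X$ at a time until every term reduces to the Redei-Berge function of a bag of sticks. A clean way to organize the peeling is strong induction on $|E|$, using the statement of the theorem itself as the inductive hypothesis for every digraph on vertex set $V$ whose edge set is a proper subset of $E$ that is not a bag of sticks.

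First, I would apply Theorem \ref{razbijanje} with $F=E$ to obtain
\[W_X=\sum_{T\subsetneq E}(-1)^{|E|-|T|-1}W_{(V,T)}.\]
Each $T\subsetneq E$ satisfies $T<E$ in $\mathcal{E}_X$. If $(V,T)$ is a bag of sticks, then $T\in\mathrm{Min}(\mathcal{E}_X)$ and $W_{(V,T)}=W_{P_{\lambda(T)}}$; otherwise $|T|<|E|$, so the inductive hypothesis expands $W_{(V,T)}$ in bags of sticks indexed by minimal elements $S\in\mathcal{E}_X$ with $S\subseteq T$. Collecting like terms yields an expansion of $W_X$ in the family $\{W_{P_{\lambda(S)}}\}_{S\in\mathrm{Min}(\mathcal{E}_X)}$.

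Now I would compute the coefficient of a fixed $W_{P_{\lambda(S)}}$. The sign factors telescope: an iteration path $S=T_k<T_{k-1}<\cdots<T_0=E$ in $\mathcal{E}_X$ contributes $\prod_{i=1}^k(-1)^{|T_{i-1}|-|T_i|-1}=(-1)^{|E|-|S|-k}$, which equals $(-1)^{|E|-|S|}(-1)^{l(c)}$ for the corresponding chain $c$ of length $k$. Organizing the double sum from the induction step by first fixing the element $T$ immediately below $E$, the coefficient becomes
\[(-1)^{|E|-|S|-1}\Bigl(1+\sum_{\substack{S\subsetneq T\subsetneq E\\(V,T)\text{ not a bag of sticks}}}\xi([S,T])\Bigr),\]
where the extra $1$ accounts for the direct contribution $T=S$ from the first step of the iteration.

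The remaining task, which I expect to be the main (but modest) obstacle, is to recognize the bracketed quantity as $-\xi([S,E])$. This follows from the recurrence
\[\xi([S,E])=-1-\sum_{\substack{S\subsetneq T\subsetneq E\\(V,T)\text{ not a bag of sticks}}}\xi([S,T]),\]
obtained by partitioning $\mathcal{C}([S,E])$ by the element $T$ immediately preceding $E$: the singleton chain $S<E$ of length one contributes the $-1$, while every longer chain uniquely decomposes as a chain in $\mathcal{C}([S,T])$ extended by the step $T<E$, which flips the sign. Substituting turns the coefficient into $(-1)^{|E|-|S|}\xi([S,E])$ and completes the proof for $W_X$. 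The formula for $U_X$ follows by applying the commuting operator $\rho$ to both sides and using $\rho(W_Y)=U_Y$.
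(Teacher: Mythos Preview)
Your proposal is correct and follows essentially the same approach as the paper's proof: both iterate Theorem \ref{razbijanje} with $F$ taken to be the full current edge set, so that the contribution of each $S\in\mathrm{Min}(\mathcal{E}_X)$ is a signed sum over maximal chains from $S$ to $E$ in $\mathcal{E}_X$, telescoping to $(-1)^{|E|-|S|}\xi([S,E])$. The only difference is organizational: the paper unfolds the recursion in one shot and reads off the chain contribution directly, whereas you package the same computation as a strong induction on $|E|$ together with the recursion $\xi([S,E])=-1-\sum_{S\subsetneq T\subsetneq E,\ (V,T)\text{ not a bag of sticks}}\xi([S,T])$, which is exactly the ``split off the penultimate element'' identity for $\xi$.
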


 \begin{proof}
     If $S\in \mathrm{Min}(\mathcal{E}_X)$, we obtain $U_{(V, S)}$ in the linear breakdown of $X=(V, E)$ once for every chain $S=S_0<_{\mathcal{E}_X}S_1<_{\mathcal{E}_X}\cdots<_{\mathcal{E}_X}S_n=E$ as follows. In the first decomposition of $W_X=W_{(V, E)}=W_{(V, S_n)}$ from Equation \ref{podskupovi}, we focus on $W_{(V, S_{n-1})}$ that appears exactly once. Further, in the decomposition of $W_{(V, S_{n-1})}$, we focus on $(V, S_{n-2})$ that also appears exactly once in it. We continue with this procedure until we get $(V, S_0)=(V, S)$. The coefficient of $U_{(V, S)}$ that arises from this chain is hence $(-1)^{|S_n|-|S_{n-1}|+1}(-1)^{|S_{n-1}|-|S_{n-2}|+1}\cdots (-1)^{|S_1|-|S_0|+1}=(-1)^{|E|-|S|+n}=(-1)^{|E|-|S|}(-1)^{n}$. For the chain $S=S_0<_{\mathcal{E}_X}S_1<_{\mathcal{E}_X}\cdots<_{\mathcal{E}_X}S_n=E$, $n$ is exactly its length. If we consider all possible chains from $S$ to $E$, we obtain $(-1)^{|E|-|S|}\xi([S, E])$, which finishes the proof. \end{proof}

In \cite[Theorem 4.18]{MS}, it is proved that the collection $\{U_{P_\lambda}\}$ forms a linear basis of $Sym$. The previous theorem gives the coefficients in the expansion of $U_X$ in this basis. Note that, if $(V, S)$ is a bag of sticks and $\lambda(S)=\lambda=(\lambda_1, \ldots, \lambda_k)$, then $|S|=\lambda_1-1+\cdots +\lambda_k-1=|V|-l(\lambda)$.

\begin{corollary}
    For a digraph $X=(V, E)$ and for any $\lambda\vdash|V|$, \[
    [U_{P_\lambda}]U_X=(-1)^{|E|+l(\lambda)-|V|}\sum_{\substack{S\in \mathrm{Min}(\mathcal{E}_X)\\ \lambda(S)=\lambda}}\xi([S, E]).
    \]
\end{corollary}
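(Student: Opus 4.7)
The plan is to read this off directly from Theorem \ref{razvoj}, which already provides the linear breakdown
\[
U_X = \sum_{S\in\mathrm{Min}(\mathcal{E}_X)} (-1)^{|E|-|S|}\xi([S,E])\, U_{P_{\lambda(S)}}.
\]
Since by \cite[Theorem 4.18]{MS} the family $\{U_{P_\mu}\}_{\mu}$ is a linear basis of $Sym$, the coefficient $[U_{P_\lambda}]U_X$ is obtained by collecting all contributions on the right-hand side in which $U_{P_{\lambda(S)}} = U_{P_\lambda}$, that is, all $S \in \mathrm{Min}(\mathcal{E}_X)$ with $\lambda(S) = \lambda$.

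Next, I would use the numerical observation preceding the corollary: if $(V,S)$ is a bag of sticks with $\lambda(S) = \lambda = (\lambda_1,\ldots,\lambda_k)$, then
\[
|S| = \sum_{i=1}^{k}(\lambda_i - 1) = |V| - l(\lambda),
\]
so that the sign
\[
(-1)^{|E|-|S|} = (-1)^{|E|-|V|+l(\lambda)}
\]
depends only on $\lambda$ (and on the fixed quantities $|V|$, $|E|$), not on the particular minimal $S$. Therefore this sign can be factored out of the sum over $\{S \in \mathrm{Min}(\mathcal{E}_X) : \lambda(S)=\lambda\}$, yielding exactly
\[
[U_{P_\lambda}]U_X = (-1)^{|E|+l(\lambda)-|V|}\sum_{\substack{S\in \mathrm{Min}(\mathcal{E}_X)\\ \lambda(S)=\lambda}}\xi([S,E]),
\]
as claimed.

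There is essentially no obstacle: the corollary is a pure bookkeeping consequence of Theorem \ref{razvoj} together with the sign computation $|S| = |V| - l(\lambda)$ for bags of sticks. The only point worth mentioning is that two distinct minimal elements $S,S' \in \mathrm{Min}(\mathcal{E}_X)$ with the same type $\lambda(S)=\lambda(S')=\lambda$ produce isomorphic bags of sticks and hence the \emph{same} symmetric function $U_{P_\lambda}$, which is why their contributions must be summed rather than treated as coefficients of distinct basis elements. This is precisely what the sum over $\{S : \lambda(S)=\lambda\}$ accounts for.
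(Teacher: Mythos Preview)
Your proposal is correct and mirrors exactly what the paper does: the corollary is stated without proof, immediately after the observation that $|S|=|V|-l(\lambda)$ for a bag of sticks $(V,S)$ of type $\lambda$, so it is meant to be read off from Theorem \ref{razvoj} together with the fact that $\{U_{P_\mu}\}$ is a basis, precisely as you do.
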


Applying $\omega$ to the result given in Theorem \ref{razvoj} yields the following.

\begin{corollary} \label{hromatskamrazvoj}
If $P$ is a poset and $D_P=(P, E)$ the digraph assigned to $P$, \[Y_\mathrm{inc(P)}=\sum_{S\in\mathrm{Min}(\mathcal{E}_{D_P})}(-1)^{|E|-|S|}\xi([S, E])\omega(W_{P_{\lambda(S)}})\] and \[X_\mathrm{inc(P)}=\sum_{S\in\mathrm{Min}(\mathcal{E}_{D_P})}(-1)^{|E|-|S|}\xi([S, E])\omega(U_{P_{\lambda(S)}}).\]
 \end{corollary}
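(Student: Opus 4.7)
The plan is very short: this is essentially a one-line deduction obtained by pushing the involution $\omega$ through the identity proved in Theorem \ref{razvoj}. Concretely, I would specialize Theorem \ref{razvoj} to the digraph $X = D_P = (P,E)$, which yields
\[
W_P = W_{D_P} = \sum_{S\in\mathrm{Min}(\mathcal{E}_{D_P})}(-1)^{|E|-|S|}\xi([S,E])\,W_{P_{\lambda(S)}},
\]
and the analogous identity for $U_P$ with $U_{P_{\lambda(S)}}$ on the right.

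Next, I would apply $\omega$ to both sides. Since $\omega$ is a linear operator on $NCSym$ (respectively on $Sym$), the signs $(-1)^{|E|-|S|}\xi([S,E])$ pass through unchanged, so I obtain
\[
\omega(W_P) = \sum_{S\in\mathrm{Min}(\mathcal{E}_{D_P})}(-1)^{|E|-|S|}\xi([S,E])\,\omega(W_{P_{\lambda(S)}}),
\]
and similarly $\omega(U_P) = \sum (-1)^{|E|-|S|}\xi([S,E])\,\omega(U_{P_{\lambda(S)}})$.

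Finally, I would invoke Theorem \ref{nekomutirajuce}, which identifies $\omega(W_P) = Y_{\mathrm{inc}(P)}$, and Theorem \ref{incomparability}, which identifies $\omega(U_P) = X_{\mathrm{inc}(P)}$. Substituting these on the left gives precisely the two claimed expansions.

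There is no real obstacle here: the main subtlety is just keeping track of which $\omega$ is being applied (the one on $NCSym$ for the first equation and the one on $Sym$ for the second), but this is handled automatically by the compatibility $\omega\circ\rho = \rho\circ\omega$ recorded in Section 5.1 and by the two preceding theorems connecting each version of the Redei-Berge function with the corresponding chromatic function via $\omega$.
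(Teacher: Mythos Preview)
Your proposal is correct and matches the paper's own argument exactly: the paper simply says ``Applying $\omega$ to the result given in Theorem~\ref{razvoj} yields the following,'' which is precisely your plan of specializing to $X=D_P$, applying the linear involution $\omega$, and identifying the left side via Theorems~\ref{nekomutirajuce} and~\ref{incomparability}.
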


Note that if $(P, S)$ is a bag of sticks, it is not necessarily a digraph assigned to some poset. More precisely, a bag of sticks is the digraph assigned to some poset $P$ if and only if it consists of disjoint edges and vertices. Therefore, the terms on the right side of the previous corollary cannot always be seen as the chromatic functions of incomparability graphs.

The expansion given in Theorem \ref{razvoj} shows that, in order to resolve the Stanley-Stembridge conjecture, it might be helpful to focus on the structure of $\mathcal{E}_P$, where $P$ is a unit interval order. Since the basis consisting of complete functions is multiplicative, it suffices to prove the $h-$positivity of $U_P$ whenever $P$ is an irreducible unit interval order. 

Finally, we give one more consequence of the decomposition from Theorem \ref{razbijanje} that could be seen as a generalization of the well-known Triple deletion theorem from \cite{OS} in the category of posets. 

\begin{theorem}
    \cite[Theorem 3.1]{OS} (Triple deletion) If $G=(V, E)$ is a graph such that the vertices $u_1, u_2, v$ form a triangle in $G$ and $e_1=\{u_1, v\}$, $e_2=\{u_2, v\}$ then \[X_G=X_{G\setminus e_1}+X_{G\setminus e_2}-X_{G\setminus e_1, e_2}.\]
\end{theorem}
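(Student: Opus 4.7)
The plan is to prove the identity by inclusion--exclusion on the set of proper colorings, exploiting the fact that $\{u_1,u_2\}$ is an edge of $G$ (since $u_1,u_2,v$ form a triangle, the edge $\{u_1,u_2\}$ is present in every graph appearing in the statement).

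First, I would write every chromatic symmetric function involved as a weighted sum over proper colorings, where to each coloring $f:V\to\mathbb{N}$ one associates the monomial $w(f)=\prod_{u\in V}x_{f(u)}$. The key observation is that a proper coloring of $G\setminus\{e_1,e_2\}$ is already forced to assign distinct colors to $u_1$ and $u_2$, because $\{u_1,u_2\}$ is still an edge. Now define
\[
A=\{f\in \mathcal C(G\setminus\{e_1,e_2\})\mid f(u_1)=f(v)\},\qquad B=\{f\in \mathcal C(G\setminus\{e_1,e_2\})\mid f(u_2)=f(v)\}.
\]
Any coloring in $A\cap B$ would satisfy $f(u_1)=f(v)=f(u_2)$, contradicting $f(u_1)\neq f(u_2)$. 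Hence $A\cap B=\emptyset$, and this is the crucial structural input coming from the triangle.

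Next I would identify the three derived chromatic functions with the obvious subsets of $\mathcal C(G\setminus\{e_1,e_2\})$: proper colorings of $G$ are exactly those which lie outside $A\cup B$, proper colorings of $G\setminus e_1$ are those which lie outside $B$ (since $e_1$ is restored and requires $f(u_1)\neq f(v)$), and proper colorings of $G\setminus e_2$ are those outside $A$. Summing $w(f)$ over each set gives
\[
X_{G\setminus e_1}=X_{G\setminus\{e_1,e_2\}}-\sum_{f\in B}w(f),\qquad X_{G\setminus e_2}=X_{G\setminus\{e_1,e_2\}}-\sum_{f\in A}w(f),
\]
and, using $A\cap B=\emptyset$,
\[
X_G=X_{G\setminus\{e_1,e_2\}}-\sum_{f\in A}w(f)-\sum_{f\in B}w(f).
\]

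Finally I would substitute the first two equations into the third to eliminate the auxiliary sums over $A$ and $B$, obtaining $X_G=X_{G\setminus e_1}+X_{G\setminus e_2}-X_{G\setminus\{e_1,e_2\}}$, as desired. The only nontrivial step is the disjointness $A\cap B=\emptyset$; without the triangle hypothesis a correction term $\sum_{f\in A\cap B}w(f)$ would appear, and so this is where the assumption that $\{u_1,u_2\}\in E$ is decisive. Everything else is bookkeeping of proper colorings, and the argument is essentially a weighted refinement of the deletion--contraction identity applied twice, which should generalize naturally to any setting (such as the one the paper is about to develop for posets) where an analogous "triangle" obstruction rules out simultaneous monochromatic pairs.
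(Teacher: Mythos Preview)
Your argument is correct in substance; the inclusion--exclusion on colorings with $A\cap B=\emptyset$ is exactly the right idea, and the displayed equations combine as you claim. One small slip: in the parenthetical for $G\setminus e_1$ you say ``$e_1$ is restored and requires $f(u_1)\neq f(v)$'', but in $G\setminus e_1$ it is $e_2$ that is present (relative to $G\setminus\{e_1,e_2\}$), forcing $f(u_2)\neq f(v)$, i.e.\ exclusion from $B$. Your conclusions and equations are consistent; only the explanatory clause has the indices swapped.

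As for comparison: the paper does not prove this statement at all---it is simply quoted from Orellana--Scott as background. What the paper does prove is the generalization in the theorem immediately following, and it does so by an entirely different route: it applies the bag-of-sticks decomposition (Theorem~\ref{razbijanje}) to $W_{D_P}$ with $F=\{(u_i,v)\mid i\in[k]\}$, observes that $D_P\setminus S=D_{P\setminus S}$ and $\mathrm{inc}(P\setminus S)=\mathrm{inc}(P)\cup S$, and then applies $\omega$ to transfer the identity from $W$ to $Y_{\mathrm{inc}(P)}$. Your approach is the standard self-contained coloring argument and needs no machinery; the paper's route is deliberately indirect, its point being that triple deletion (and its $k$-fold extension) is the shadow, under the $W\leftrightarrow Y$ dictionary, of a simple subset-sum identity on the digraph side.
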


By abuse of notation, we identify $(u, v)$ and $\{u, v\}$ in the following proof.

\begin{theorem}
    Let $P$ be a poset in which $v\in P$ covers some elements $u_1, \ldots, u_k\in P$, $k\geq 2.$ If $F=\{\{u_i, v\}\mid i\in [k]\}$, then
    \[Y_{\mathrm{inc}(P)}=\sum_{S\subseteq F}(-1)^{|S|-1}Y_{\mathrm{inc}(P)\cup S}.\] Consequently, \[X_{\mathrm{inc}(P)}=\sum_{S\subseteq F}(-1)^{|S|-1}X_{\mathrm{inc}(P)\cup S}.\] The same expansions hold if $v\in P$ is covered by $u_1, \ldots, u_k\in P.$
\end{theorem}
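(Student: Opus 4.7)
The plan is to apply the linear-breakdown identity (Theorem \ref{razbijanje}) to the digraph $D_P$ with the distinguished edge set $F=\{(u_i,v)\mid i\in[k]\}$, and then transport the resulting identity from $W$ to $Y$ (and from $U$ to $X$) via Theorem \ref{nekomutirajuce}.

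First I would verify that the subdigraph $(P,F)$ of $D_P$ is not a bag of sticks: since $k\geq 2$, every edge of $F$ has sink $v$, so the connected component of $(P,F)$ containing $v$ is a $k$-spoked in-star, in which $v$ has in-degree $k\geq 2$ and hence cannot occur on any directed path. Theorem \ref{razbijanje} therefore yields
\[
W_{D_P}\;=\;\sum_{\emptyset\neq S\subseteq F}(-1)^{|S|-1}\,W_{D_P\setminus S}.
\]
The next step is to check that for every $S\subseteq F$ the digraph $D_P\setminus S$ is again the digraph of a poset, which I denote by $P\setminus S$. The reason is that each edge of $F$ is a covering: if $(a,b),(b,c)\in D_P\setminus S$ then $(a,c)\in D_P$ by transitivity of $P$, and the only way $(a,c)$ could itself lie in $S$ would force an intermediate element $b$ with $u_i<_P b<_P v$ for some $i$, contradicting the covering hypothesis. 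Iterating part (a) of the lemma preceding Theorem \ref{nekomutirajuce} then gives
\[
\mathrm{inc}(P\setminus S)\;=\;\mathrm{inc}(P)\cup S,
\]
where the elements of $S$ on the right are read as unordered pairs $\{u_i,v\}$. The iteration is legitimate because deleting one covering edge $e_1\in F$ cannot destroy the covering status of any other $e_2\in F$, since edge deletion can only remove comparabilities, never add them.

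Applying the involution $\omega$ termwise to the displayed identity for $W_{D_P}$ and invoking Theorem \ref{nekomutirajuce} replaces every $W_{P\setminus S}$ by $Y_{\mathrm{inc}(P\setminus S)}=Y_{\mathrm{inc}(P)\cup S}$, giving precisely the desired expansion of $Y_{\mathrm{inc}(P)}$. The statement for $X_{\mathrm{inc}(P)}$ follows by letting the variables commute via the operator $\rho$. The dual case, in which $v$ is covered by each $u_i$, is treated identically: $F'=\{(v,u_i)\mid i\in[k]\}$ is a $k$-spoked out-star (hence still not a bag of sticks) and its edges are again coverings, so the transitivity argument is unchanged; alternatively, one may pass to the dual poset $P^*$, for which $\mathrm{inc}(P^*)=\mathrm{inc}(P)$ and the already-proved case applies. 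The only nontrivial step of the whole argument is the combined transitivity and incomparability verification, which is elementary once the covering hypothesis is exploited; no new machinery beyond Theorems \ref{razbijanje} and \ref{nekomutirajuce} is required.
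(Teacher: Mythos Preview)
Your proof is correct and follows essentially the same route as the paper: apply Theorem \ref{razbijanje} to $D_P$ with the edge set $F=\{(u_i,v)\mid i\in[k]\}$, observe that $D_P\setminus S=D_{P\setminus S}$ and $\mathrm{inc}(P\setminus S)=\mathrm{inc}(P)\cup S$ for each $S\subseteq F$, and then apply $\omega$ via Theorem \ref{nekomutirajuce}. You simply spell out in more detail why $(P,F)$ is not a bag of sticks and why deleting any subset of coverings preserves transitivity, points the paper leaves implicit.
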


\begin{proof}
    Since $F=\{(u_i, v)\mid i\in [k]\}$ is such that the subdigraph $(P, F)$ of $D_P$ is not a bag of sticks, Theorem \ref{razbijanje} yields \begin{equation} \label{pomoc}
        W_{D_P}=\sum_{\substack{S\subseteq F \\ S\neq\emptyset}}(-1)^{|S|-1}W_{D_P\setminus S} \end{equation} 
        
        \begin{center}
            \begin{tikzcd}
                & v              &     &                  \\
                &                &     &                  \\
u_1 \arrow[ruu] & u_2 \arrow[uu] & ... & u_k \arrow[lluu]
\end{tikzcd}
        \end{center}However, $D_P\setminus S= D_{P\setminus S}$, while $\mathrm{inc}(P\setminus S)=\mathrm{inc}(P)\cup S$ and therefore, $\omega(W_{D_P\setminus S})=\omega(W_{D_{P\setminus S}})=Y_{\mathrm{inc}(P\setminus S)}=Y_{\mathrm{inc}(P)\cup S}$. Applying $\omega$ to both sides of Equation \ref{pomoc} completes the proof.
\end{proof}

Note that elements $u_1, \ldots, u_k$ from the previous theorem are necessarily incomparable.  In addition, for $k=2$, this theorem is exactly the Triple deletion theorem  applied to the triangle consisting of vertices $u_1, u_2, v$ in $\mathrm{inc}(P\setminus (u_1, v), (u_2, v))=\mathrm{inc}(P)\cup \{u_1, v\}\cup\{u_2, v\}$.

\subsection{Connection with $\Xi_D$ and $C_D$}

The Redei-Berge function of a bag of sticks can be easily calculated using the results concearning the path-cycle symmetric function $\Xi$ from \cite{TC}. 

If $X=(V, E)$ is a digraph, a subset $S\subseteq E$ is a \textit{path-cycle cover} of $X$ if $(V, S)$ is a disjoint union of directed paths and directed cycles. For a path-cycle cover $S$, we write  $\pi(S)$ for the integer partition whose entries correspond to the sizes of directed paths of $(V, S)$ and $\sigma(S)$ for the integer partition whose entries correspond to the sizes of directed cycles of $(V, S)$.  The \textit{path-cycle symmetric function} of a digraph $X=(V, E)$ is a symmetric function in two countably infinite lists of commuting variables $x=(x_1, x_2, \ldots)$ and $y=(y_1, y_2, \ldots)$ defined as \[\Xi_X(x, y)=\sum_S \widetilde{m}_{\pi(S)}(x)p_{\sigma(S)}(y),\] where the sum is over all path-cycle covers of $X$ and $\widetilde{m}_\lambda=r_1!r_2!\cdots m_\lambda$ for $\lambda=(1^{r_1}2^{r_2} \ldots)$.

In \cite[Proposition 12]{TC} it is proved that for a digraph $X=(V, E)$ \[\Xi_X(x, 0)=\sum_{\sigma\in\Sigma_V}F_{X\mathrm{Des
}(\sigma)^c},\] which together with the fact that $\omega(F_I)=F_{I^c}$ gives the following relation between the path-cycle symmetric function and the Redei-Berge symmetric function.

\begin{theorem}\label{omegovanje}
    For any digraph $X$, \[\Xi_X(x,0)=\omega(U_X).\]
\end{theorem}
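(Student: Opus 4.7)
The plan is to chain together the two ingredients the paragraph preceding the theorem already highlights, namely Chow's formula
\[
\Xi_X(x,0)=\sum_{\sigma\in\Sigma_V}F_{X\mathrm{Des}(\sigma)^c}
\]
from \cite[Proposition 12]{TC}, and the identity $\omega(F_I)=F_{I^c}$ for fundamental quasisymmetric functions recorded earlier in the preliminaries. There is essentially no obstacle: the theorem is a one-line verification obtained by applying $\omega$ termwise to the definition of $U_X$ from Equation \ref{descents}.

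More concretely, starting from $U_X=\sum_{\sigma\in\Sigma_V}F_{X\mathrm{Des}(\sigma)}$, linearity of the involution $\omega$ on $QSym$ together with $\omega(F_I)=F_{I^c}$ yields
\[
\omega(U_X)=\sum_{\sigma\in\Sigma_V}\omega\!\left(F_{X\mathrm{Des}(\sigma)}\right)=\sum_{\sigma\in\Sigma_V}F_{X\mathrm{Des}(\sigma)^c},
\]
and the right-hand side is precisely Chow's expression for $\Xi_X(x,0)$. Equating gives $\omega(U_X)=\Xi_X(x,0)$, as required.

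The only thing worth double-checking is bookkeeping: the complement $X\mathrm{Des}(\sigma)^c$ is taken inside $[n-1]$ where $n=|V|$, and this is exactly the subset of $[n-1]$ on which $\omega$ acts when applied to $F_{X\mathrm{Des}(\sigma)}$ (both live at the same degree), so the two sums are indexed identically and the equality is literally termwise. No induction, no combinatorial manipulation of path-cycle covers, and no re-proof of Chow's proposition is needed; the statement is a clean reformulation once $\omega$ has been brought inside the sum defining $U_X$.
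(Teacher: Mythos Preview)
Your proof is correct and is essentially identical to the paper's own justification: the paper does not give a separate proof but simply states that Chow's formula $\Xi_X(x,0)=\sum_{\sigma\in\Sigma_V}F_{X\mathrm{Des}(\sigma)^c}$ together with $\omega(F_I)=F_{I^c}$ yields the theorem, which is exactly what you spell out.
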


 For a loopless digraph $X$, denote by $n_\mu(X)$ the number of spanning subdigraphs of $X$ that are isomorphic to the bag of sticks $P_\mu$. The definition of the path-cycle symmetric function immediately gives the following.

\begin{theorem}\label{TC}
    For any loopless digraph $X=(V, E)$, \[\Xi_X(x,0)=\sum_{\mu\vdash |V|} n_\mu(X)\widetilde{m}_\mu. \] 
\end{theorem}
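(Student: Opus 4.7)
The plan is to unfold the definition of $\Xi_X(x,y)$ and simply track what happens when the second alphabet is specialized to zero. Starting from
\[\Xi_X(x,0)=\sum_S\widetilde{m}_{\pi(S)}(x)\,p_{\sigma(S)}(0),\]
where $S$ ranges over all path-cycle covers of $X$, I would observe that for a nonempty integer partition $\sigma=(\sigma_1,\ldots,\sigma_k)$ the specialization $p_\sigma(0)=\prod_i p_{\sigma_i}(0)$ vanishes because each $p_{\sigma_i}$ is a sum of monomials in the variables $y_j$ with no constant term. Hence the only surviving summands correspond to path-cycle covers $S$ with $\sigma(S)=\emptyset$, for which $p_{\sigma(S)}(0)=1$.

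Next I would identify these surviving covers. By definition, $\sigma(S)=\emptyset$ means that the spanning subdigraph $(V,S)$ contains no directed cycles; it is therefore a disjoint union of directed paths. Since $X$ is loopless, no cycles of length one can occur either, so such $S$ are exactly the spanning subdigraphs of $X$ that are bags of sticks, and $\pi(S)$ encodes the corresponding sizes of the path components.

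Finally, grouping the remaining terms according to the isomorphism type of the bag of sticks $(V,S)\cong P_\mu$ yields
\[\Xi_X(x,0)=\sum_{\mu\vdash|V|}\Bigl(\#\{S\subseteq E:(V,S)\cong P_\mu\}\Bigr)\widetilde{m}_\mu(x)=\sum_{\mu\vdash|V|}n_\mu(X)\widetilde{m}_\mu,\]
as desired. There is essentially no obstacle here beyond the convention $p_\emptyset=1$ and the bookkeeping observation that a loopless digraph cannot contribute spurious one-cycles to $\sigma(S)$; the whole statement is just a specialization of the defining formula followed by a regrouping.
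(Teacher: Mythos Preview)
Your proposal is correct and matches the paper's approach exactly: the paper simply states that the result follows immediately from the definition of the path-cycle symmetric function, and your argument is precisely the unfolding of that observation (specialize $y\mapsto 0$, kill the covers with nonempty $\sigma(S)$, and regroup by $\mu$).
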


The number $n_\mu(X)$ has a nice combinatorial interpretation if $X$ is a bag of sticks $P_\lambda$. For a composition $\alpha=(\alpha_1, \ldots, \alpha_k)$ and for $\tau\in \mathfrak{S}_k$, we write $\tau(\alpha)$ for composition $(\alpha_{\tau(1)}, \ldots, \alpha_{\tau(k)})$. If $\lambda, \mu\vdash n$, let $N(\lambda, \mu)$ be the number of permutations $\tau$ of parts of $\mu$ such that $\tau(\mu)$ is finer than $\lambda$, i.e. summing some adjacent parts of $\tau(\mu)$ yields $\lambda$. If $\mu=(1^{r_1}2^{r_2}\ldots)$, it is not difficult to see that \[n_\mu(P_\lambda)=\frac{N(\lambda, \mu)}{r_1!r_2!\cdots},\]which together with Theorem \ref{omegovanje} yields the following.

\begin{corollary} \label{mrazvoj} For any $\lambda\vdash n$,
    \[\omega(U_{P_\lambda})=\sum_{\mu\vdash n} N(\lambda, \mu)m_\mu.\] 
\end{corollary}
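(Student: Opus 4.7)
The plan is to chain together the three ingredients assembled immediately before the statement. First, Theorem \ref{omegovanje} gives $\omega(U_{P_\lambda}) = \Xi_{P_\lambda}(x,0)$, so it suffices to expand the path-cycle generating function (evaluated at $y=0$) in the monomial basis. Since $P_\lambda$ is loopless, Theorem \ref{TC} supplies
\[
\Xi_{P_\lambda}(x,0) = \sum_{\mu \vdash n} n_\mu(P_\lambda)\,\widetilde{m}_\mu,
\]
and by definition $\widetilde{m}_\mu = r_1!\, r_2! \cdots m_\mu$ when $\mu = (1^{r_1} 2^{r_2} \cdots)$. So the whole corollary reduces to verifying the combinatorial identity $n_\mu(P_\lambda) = N(\lambda,\mu)/(r_1!\,r_2!\cdots)$ stated just above the corollary; once this is in hand, the factorials cancel and one reads off the claimed coefficient $N(\lambda,\mu)$ of $m_\mu$.

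The substantive step is therefore the combinatorial count. I would argue it as follows. A spanning subdigraph of $P_\lambda$ isomorphic to $P_\mu$ is obtained by deleting a subset of edges from each directed path $P_{\lambda_i}$; the remaining fragments, read left to right along $P_{\lambda_i}$, form a composition $\alpha^{(i)} \models \lambda_i$. The concatenation $\alpha^{(1)} \cdot \alpha^{(2)} \cdots \alpha^{(k)}$ is then a composition $\alpha \models n$ whose multiset of parts equals that of $\mu$, and conversely any such composition finer than $\lambda$ splits uniquely at the prescribed positions $\lambda_1, \lambda_1+\lambda_2, \ldots$ and thus recovers a spanning subdigraph. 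Hence $n_\mu(P_\lambda)$ counts exactly the \emph{distinct} compositions of $n$ whose part-multiset is $\mu$ and that refine $\lambda$. On the other hand, $N(\lambda,\mu)$ counts permutations $\tau \in \mathfrak{S}_{\ell(\mu)}$ with $\tau(\mu)$ finer than $\lambda$, and each distinct composition with part-multiset $\mu$ arises from exactly $r_1!\,r_2!\cdots$ such permutations (the stabilizer of the multiset within $\mathfrak{S}_{\ell(\mu)}$). Dividing yields the claimed formula.

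Putting everything together gives
\[
\omega(U_{P_\lambda}) \;=\; \sum_{\mu \vdash n} \frac{N(\lambda,\mu)}{r_1!\,r_2!\cdots}\,(r_1!\,r_2!\cdots)\,m_\mu \;=\; \sum_{\mu \vdash n} N(\lambda,\mu)\,m_\mu,
\]
which is the corollary. The only place calling for genuine care is the bijection in the middle paragraph, specifically the observation that two orderings of the parts of $\mu$ produce the same spanning subdigraph precisely when they differ by permuting equal parts; everything else is a direct substitution from the cited results.
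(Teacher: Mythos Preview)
Your proof is correct and follows exactly the paper's route: the paper also derives the corollary by combining Theorem~\ref{omegovanje} with Theorem~\ref{TC} and the identity $n_\mu(P_\lambda)=N(\lambda,\mu)/(r_1!r_2!\cdots)$, which it declares ``not difficult to see''. Your middle paragraph simply supplies the details of that omitted verification, and the bijection you describe is the right one.
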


\begin{corollary}
    If $P$ is a poset, $D_P=(P, E)$ and $\mu=(1^{r_1}2^{r_2}\ldots)\vdash |P|$, then \[\frac{1}{r_1!r_2!\cdots}\sum_{S\in\mathrm{Min}(\mathcal{E}_{D_P})}(-1)^{|E|-|S|}\xi([S, E]) N(\lambda(S), \mu)\] is the number of stable partitions in $\mathrm{inc}(P).$
\end{corollary}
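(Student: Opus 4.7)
The plan is to combine Corollary \ref{hromatskamrazvoj} and Corollary \ref{mrazvoj} to obtain the $m$-basis expansion of $X_{\mathrm{inc}(P)}$, then to read off the coefficient of $m_\mu$ and interpret it combinatorially. Substituting $\omega(U_{P_{\lambda(S)}})=\sum_{\nu}N(\lambda(S),\nu)m_\nu$ from Corollary \ref{mrazvoj} into Corollary \ref{hromatskamrazvoj} and swapping the order of summation yields
\[
X_{\mathrm{inc}(P)}=\sum_{\nu\vdash |P|}\left(\sum_{S\in\mathrm{Min}(\mathcal{E}_{D_P})}(-1)^{|E|-|S|}\xi([S,E])N(\lambda(S),\nu)\right)m_\nu.
\]
In particular, the expression appearing in the statement is exactly $\frac{1}{r_1!r_2!\cdots}[m_\mu]X_{\mathrm{inc}(P)}$.

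It then remains to identify $\frac{1}{r_1!r_2!\cdots}[m_\mu]X_G$ with the number of stable partitions of $G$ of type $\mu$ for an arbitrary graph $G$. Starting from $X_G=\sum_f\prod_v x_{f(v)}$, I would group proper colorings by the unordered stable partition $\pi$ of the vertex set they induce and observe that, for a fixed $\pi$ of type $\mu=(1^{r_1}2^{r_2}\cdots)$, each composition $\alpha\sim\mu$ arises from exactly $r_1!r_2!\cdots$ orderings of the blocks of $\pi$ (freely permute the $r_i$ blocks of size $i$ among the $r_i$ positions of size $i$ in $\alpha$). Each such ordered arrangement contributes one copy of $M_\alpha$ to $X_G$, so $\pi$ contributes $(r_1!r_2!\cdots)\sum_{\alpha\sim\mu}M_\alpha=(r_1!r_2!\cdots)m_\mu$, and summing over all stable partitions of type $\mu$ gives $[m_\mu]X_G=(r_1!r_2!\cdots)N_\mu(G)$, where $N_\mu(G)$ denotes the number of stable partitions of $G$ of type $\mu$. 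Taking $G=\mathrm{inc}(P)$ completes the proof.

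The argument is essentially bookkeeping, so no serious obstacle is expected: the nontrivial content is already packaged in Corollaries \ref{hromatskamrazvoj} and \ref{mrazvoj}. The only delicate point is tracking the factor $r_1!r_2!\cdots$ when translating between ordered stable compositions (which are what $[M_\alpha]X_G$ counts directly, as in the proof of Lemma \ref{fbaza}) and unordered stable partitions; misplacing this factor would produce the wrong normalization.
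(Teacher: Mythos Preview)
Your proposal is correct and follows essentially the same route as the paper: combine Corollaries \ref{hromatskamrazvoj} and \ref{mrazvoj} to get the $m$-expansion of $X_{\mathrm{inc}(P)}$, then identify the coefficient of $m_\mu$ combinatorially. The only cosmetic difference is that the paper cites Stanley's expansion $X_G=\sum_\mu a_\mu\widetilde{m}_\mu$ (with $a_\mu$ the number of stable partitions of type $\mu$) directly, whereas you rederive this fact by the coloring-grouping argument.
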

\begin{proof}
   The expansion of $X_G$ from \cite[Proposition 2.4]{SR} tells that for every graph $G$,
    \[X_G=\sum a_\mu \widetilde{m}_\mu,\] where $a_\mu$ is the number of stable partitions of type $\mu$ in $G.$ Corollaries \ref{hromatskamrazvoj} and \ref{mrazvoj} yield 
    \[X_\mathrm{inc(P)}=\sum_{S\in\mathrm{Min}(\mathcal{E}_{D_P})}(-1)^{|E|-|S|}\xi([S, E])\sum_{\mu\vdash |P|} N(\lambda(S), \mu)m_\mu.\] Comparing the coefficients of $m_\mu$ in these two expansions completes the proof.
\end{proof}

In the end, as an interesting application of the cover polynomial introduced by Chung and Graham in \cite{CG}, we give a formula for calculating the Redei-Berge polynomial of a bag of sticks.

The \textit{cover polynomial} $C_X(m, n)$ of a digraph $X$ is defined as

\[C_X(m, n)=\sum_{S}m(m-1)\cdots (m-l(\pi(S))+1) n^{l(\sigma(S))},\]
where the sum runs over all path-cycle covers of $X$. 

For a loopless digraph $X$, let $n_i(X)=\sum_{l(\mu)=i} n_\mu (X).$ Note that $n_i(X)$ is actually the number of ways to cover $X$ with exactly $i$ disjoint paths. The definition of $C_X$ immediately yields the following.

\begin{theorem}
     Let $X=(V, E)$ be a loopless digraph. Then \[C_X(m, 0)=\sum_{i=0}^{|V|}n_i(X)m(m-1)\cdots(m-i+1).\] 
\end{theorem}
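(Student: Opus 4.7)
The plan is to unpack the definition of $C_X(m,n)$ directly and observe that substitution $n=0$ simply kills all summands that contain cycles, leaving a sum indexed by pure path covers. So the main step is essentially bookkeeping.

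First I would write
\[
C_X(m,0)=\sum_{S}m(m-1)\cdots(m-l(\pi(S))+1)\cdot 0^{l(\sigma(S))},
\]
where the sum is over all path-cycle covers $S$ of $X$. With the convention $0^{0}=1$, a summand vanishes exactly when $l(\sigma(S))\geq 1$, i.e.\ when the cover $S$ contains at least one directed cycle. Therefore only those $S$ for which $(V,S)$ is a disjoint union of directed paths (a bag of sticks) survive. Equivalently, $\sigma(S)=\varnothing$ and $(V,S)\cong P_{\pi(S)}$.

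Next I would reorganize the remaining sum by the number of paths in the cover. For each $i$, the number of spanning subdigraphs of $X$ isomorphic to some bag of sticks $P_\mu$ with $l(\mu)=i$ is, by definition, $n_i(X)=\sum_{l(\mu)=i}n_\mu(X)$. Since for each such $S$ the factor $m(m-1)\cdots(m-l(\pi(S))+1)=m(m-1)\cdots(m-i+1)$ depends only on $i$ and not on the precise $\mu$, grouping yields
\[
C_X(m,0)=\sum_{i=0}^{|V|}n_i(X)\,m(m-1)\cdots(m-i+1),
\]
which is the claimed identity. (For $i=0$ the falling product is the empty product $1$, consistent with $n_0(X)=1$ iff $V=\varnothing$ and $0$ otherwise.)

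There is no real obstacle here: the statement is a direct consequence of the definitions. The only point that requires a brief justification is the convention $0^{0}=1$ used to handle cycle-free covers, and the observation that the loopless assumption on $X$ is what makes bags of sticks genuine spanning subdigraphs of $X$ (so that the counts $n_\mu(X)$ used in Theorem~\ref{TC} and in the statement coincide).
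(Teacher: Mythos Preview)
Your proof is correct and is exactly the argument the paper has in mind: the paper simply states that the theorem follows immediately from the definition of $C_X$, and your unpacking of that definition (setting $n=0$ to kill all covers with cycles, then grouping the remaining pure path covers by the number of paths) is precisely that.
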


 The cover polynomial $C_X(m, n)$ is a specialization of the path-cycle symmetric function $\Xi$ \cite[Proposition 1]{TC}, which together with Theorem \ref{omegovanje} implies that for $X=(V, E)$, $C_X(m, 0)=(-1)^{|V|}u_X(-m).$ Consequently, \[u_X(m)=\sum_{i=0}^{|V|}(-1)^{|V|-i}n_i(X)m(m+1)\cdots(m+i-1).\] In particular, if $X$ is a bag of sticks, it is not difficult to calculate $n_i(X)$.

\begin{corollary}
    \[u_{P_\lambda}(m)=\sum_{i= l(\lambda)}^{|\lambda|}(-1)^{|\lambda|-i}\binom{|\lambda|-l(\lambda)}{i-l(\lambda)}m(m+1)\cdots (m+i-1).\]
\end{corollary}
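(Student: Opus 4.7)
The plan is to apply the preceding theorem
\[
u_X(m)=\sum_{i=0}^{|V|}(-1)^{|V|-i}n_i(X)\,m(m+1)\cdots(m+i-1)
\]
with $X=P_\lambda$, so the task reduces to computing $n_i(P_\lambda)$, the number of ways to cover $P_\lambda$ by exactly $i$ vertex-disjoint directed paths. Since $P_\lambda$ is itself a disjoint union of directed paths with $|\lambda|$ vertices and $\sum_j(\lambda_j-1)=|\lambda|-l(\lambda)$ edges, any spanning subdigraph of $P_\lambda$ is automatically a disjoint union of directed paths, so a path cover is just a choice of an arbitrary subset $S$ of the edges of $P_\lambda$.

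Next, I would observe that a spanning forest of paths with $s$ edges has exactly $|\lambda|-s$ path components (vertices minus edges). Therefore, the number of path covers producing exactly $i$ components equals the number of subsets $S$ of the edge set of $P_\lambda$ with $|S|=|\lambda|-i$:
\[
n_i(P_\lambda)=\binom{|\lambda|-l(\lambda)}{|\lambda|-i}=\binom{|\lambda|-l(\lambda)}{i-l(\lambda)},
\]
with the convention that this vanishes unless $l(\lambda)\le i\le |\lambda|$.

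Substituting into the formula from the preceding theorem gives
\[
u_{P_\lambda}(m)=\sum_{i=l(\lambda)}^{|\lambda|}(-1)^{|\lambda|-i}\binom{|\lambda|-l(\lambda)}{i-l(\lambda)}m(m+1)\cdots(m+i-1),
\]
which is the stated identity.

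There is no real obstacle here; the only thing to verify is the elementary combinatorial identity $n_i(P_\lambda)=\binom{|\lambda|-l(\lambda)}{i-l(\lambda)}$, which follows from the fact that in a forest the number of components equals the number of vertices minus the number of edges. The rest is a direct substitution into the formula already established in the text.
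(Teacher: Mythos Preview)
Your proof is correct and takes essentially the same approach as the paper: the paper simply remarks that ``if $X$ is a bag of sticks, it is not difficult to calculate $n_i(X)$'' and states the corollary, and you have supplied precisely the omitted computation $n_i(P_\lambda)=\binom{|\lambda|-l(\lambda)}{i-l(\lambda)}$ via the edge-count/component-count argument.
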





\bibliographystyle{model1a-num-names}
\bibliography{<your-bib-database>}




\section{Declarations and statements}

The authors declare that no funds, grants, or other support were received during the preparation of this manuscript. The authors have no relevant financial or non-financial interests to disclose. All authors contributed to the study conception and design. All authors read and approved the final manuscript. 

\section{Data availability}

No data was used for the research described in the article.




\end{document}